\def\ord{{\rm ord}}
\def\ac{{\overline{\rm ac}}}
\def\GL{{\rm GL}}
\let\cal\mathcal
\def\11{{\mathbf 1}}
\def\CC{{\mathbf C}}
\def\LL{{\mathbf L}}
\def\NN{{\mathbf N}}
\def\PP{{\mathbf P}}
\def\QQ{{\mathbf Q}}
\def\RR{{\mathbf R}}
\def\ZZ{{\mathbf Z}}
\def\cC{{\mathcal C}}
\def\cD{{\mathcal D}}
\def\cL{{\mathcal L}}
\def\cM{{\mathcal M}}
\def\cV{{\mathcal V}}
\mathchardef\alphag="7C0B
\mathchardef\betag="7C0C
\mathchardef\gammag="7C0D
\mathchardef\deltag="7C0E
\mathchardef\varepsilong="7C22
\mathchardef\varphig="7C27
\mathchardef\psig="7C20
\mathchardef\zetag="7C10
\mathchardef\epsilong="7C0F
\mathchardef\rhog="7C1A
\mathchardef\taug="7C1C
\mathchardef\upsilong="7C1D
\mathchardef\iotag="7C13
\mathchardef\thetag="7C12
\mathchardef\pig="7C19
\mathchardef\sigmag="7C1B
\mathchardef\etag="7C11
\mathchardef\omegag="7C21
\mathchardef\kappag="7C14
\mathchardef\lambdag="7C15
\mathchardef\mug="7C16
\mathchardef\xig="7C18
\mathchardef\chig="7C1F
\mathchardef\nug="7C17
\mathchardef\varthetag="7C23
\mathchardef\varpig="7C24
\mathchardef\varrhog="7C25
\mathchardef\varsigmag="7C26
\mathchardef\Omegag="7C0A
\mathchardef\Thetag="7C02
\mathchardef\Sigmag="7C06
\mathchardef\Deltag="7C01
\mathchardef\Phig="7C08
\mathchardef\Gammag="7C00
\mathchardef\Psig="7C09
\mathchardef\Lambdag="7C03
\mathchardef\Xig="7C04
\mathchardef\Pig="7C05
\mathchardef\Upsilong="7C07
\newtheorem{theorem}[subsubsection]{Theorem}
\newtheorem{lem}[subsubsection]{Lemma}
\newtheorem{cor}[subsubsection]{Corollary}
\newtheorem{prop}[subsubsection]{Proposition}
\newtheorem{claim}[subsubsection]{Claim}
\theoremstyle{definition}
\newtheorem{definition}[subsubsection]{Definition}
\newtheorem{example}[subsubsection]{Example}
\newtheorem{def-prop}[subsubsection]{Proposition-Definition}
\newtheorem{def-theorem}[subsubsection]{Theorem-Definition}
\newtheorem{def-lem}[subsubsection]{Lemma-Definition}
\theoremstyle{remark}
\newtheorem{remark}[subsubsection]{Remark}
\theoremstyle{remark}
\theoremstyle{plain}
\numberwithin{equation}{subsection}
\def\boxit#1#2{\setbox1=\hbox{\kern#1{#2}\kern#1}%
\dimen1=\ht1 \advance\dimen1 by #1
\dimen2=\dp1 \advance\dimen2 by #1
\setbox1=\hbox{\vrule height\dimen1 depth\dimen2\box1\vrule}%
\setbox1=\vbox{\hrule\box1\hrule}%
\advance\dimen1 by .4pt \ht1=\dimen1
\advance\dimen2 by .4pt \dp1=\dimen2 \box1\relax}
\newcommand{\sur}[2]{\genfrac{}{}{0pt}{}{#1}{#2}}
\let\cal\mathcal
\def\CC{{\mathbf C}}
\def\LL{{\mathbf L}}
\def\NN{{\mathbf N}}
\def\PP{{\mathbf P}}
\def\QQ{{\mathbf Q}}
\def\RR{{\mathbf R}}
\def\ZZ{{\mathbf Z}}
\def\cC{{\mathcal C}}
\def\cD{{\mathcal D}}
\def\cL{{\mathcal L}}
\def\cM{{\mathcal M}}
\def\cV{{\mathcal V}}
\mathchardef\alphag="7C0B
\mathchardef\betag="7C0C
\mathchardef\gammag="7C0D
\mathchardef\deltag="7C0E
\mathchardef\varepsilong="7C22
\mathchardef\varphig="7C27
\mathchardef\psig="7C20
\mathchardef\zetag="7C10
\mathchardef\epsilong="7C0F
\mathchardef\rhog="7C1A
\mathchardef\taug="7C1C
\mathchardef\upsilong="7C1D
\mathchardef\iotag="7C13
\mathchardef\thetag="7C12
\mathchardef\pig="7C19
\mathchardef\sigmag="7C1B
\mathchardef\etag="7C11
\mathchardef\omegag="7C21
\mathchardef\kappag="7C14
\mathchardef\lambdag="7C15
\mathchardef\mug="7C16
\mathchardef\xig="7C18
\mathchardef\chig="7C1F
\mathchardef\nug="7C17
\mathchardef\varthetag="7C23
\mathchardef\varpig="7C24
\mathchardef\varrhog="7C25
\mathchardef\varsigmag="7C26
\mathchardef\Omegag="7C0A
\mathchardef\Thetag="7C02
\mathchardef\Sigmag="7C06
\mathchardef\Deltag="7C01
\mathchardef\Phig="7C08
\mathchardef\Gammag="7C00
\mathchardef\Psig="7C09
\mathchardef\Lambdag="7C03
\mathchardef\Xig="7C04
\mathchardef\Pig="7C05
\mathchardef\Upsilong="7C07
\DeclareMathOperator*{\lcm}{lcm}
\DeclareMathOperator{\sqa}{\square}
\def\sq{{\sqa\nolimits}}
\def\ord{{\rm ord}}
\def\Jac{{\rm Jac}}
\def\MV{{\rm MV}_\infty}
\begin{document}
\title[Metric properties
of definable
sets]{Local metric properties
and regular stratifications
of $p$-adic definable
sets}

\author{Raf Cluckers}
\address{Universit\'e Lille 1, Laboratoire Painlev\'e, CNRS - UMR 8524, Cit\'e Scientifique, 59655
Villeneuve d'Ascq C'edex, France, and,
Katholieke Universiteit Leuven, Department of Mathematics,
Celestijnenlaan 200B, B-3001 Leu\-ven, Bel\-gium\\  } \email{raf.cluckers@wis.kuleuven.be}
\urladdr{http://www.wis.kuleuven.be/algebra/Raf/}
\author{Georges Comte}

\address{Laboratoire J.-A. Dieudonn\'e,
Universit\'e de Nice - Sophia Antipolis, Parc Valrose,
06108 Nice Cedex 02, France (UMR 6621 du CNRS)}
\email{comte@math.unice.fr}
\urladdr{http://www-math.unice.fr/membres/comte.html}

\author{Fran\c cois Loeser}

\address{{\'E}cole Normale Sup{\'e}rieure,
D{\'e}partement de math{\'e}matiques et applications,
45 rue d'Ulm,
75230 Paris Cedex 05, France
(UMR 8553 du CNRS)}
\email{Francois.Loeser@ens.fr}
\urladdr{http://www.dma.ens.fr/$\sim$loeser/}

\maketitle

\renewcommand{\partname}{}

\begin{abstract} We study the geometry of germs of definable (semialgebraic or subanalytic) sets over a $p$-adic field from the metric, differential and measure geometric point of view.
We prove that the local density of such sets at each of their points does exist.
We then introduce the notion of distinguished tangent cone with respect to some open subgroup with finite index in the multiplicative group of our field and show, as it is the case in the real setting, that, up to some multiplicities, the local density may be computed on this distinguished tangent cone.
We also prove that these distinguished tangent cones stabilize for small enough subgroups.
We finally obtain the $p$-adic counterpart of the Cauchy-Crofton formula for the density. To prove these results we use the Lipschitz decomposition of definable $p$-adic sets of \cite{CCL} and prove here
the genericity of the regularity conditions  for stratification
such as $(w_f)$, $(w)$, $(a_f)$, $(b)$ and $(a)$ conditions.
\end{abstract}

\section*{Introduction}

The present paper is devoted
to the study of local metric properties of
definable subsets of the $p$-adic affine space, with special
stress on the local density of these subsets. It contains also results
on tangent
cones and existence of regular stratifications.

We shall start by recalling what is known in the real
and complex
context regarding the
local density of
(sub-)analytic
 sets.
 When $X_a$ is a germ at $a$ of
a complex analytic subset $X$
of real dimension $d$
of the affine space $\CC^n $, the
 local density $\Theta_{d}(X_a)$ of $X_a$,
sometimes called in this setting the Lelong number of $X_a$,
has been introduced by Lelong in \cite{Lelong} as the limit of volumes of
the intersection of a representative
$X$ of the germ with  suitably renormalized balls around $a$, namely
\begin{equation*}
\Theta_{d}(X_a)=\lim_{r\to 0}
\frac{\mu_{d}({X}\cap B(a,r))}{\mu_{d}(B^{d}(0,r))},
\end{equation*}
where $ B^{d}(0,r)$ is the real $d$-dimensional ball of centre $0$ and
radius $r>0$ and $\mu_d$ stands for $d$-dimensional volume.
Lelong actually proved that the function
$$\displaystyle r
\mapsto
\frac{\mu_d(X\cap B(a,r))}{r^d} $$
 decreases
as $r$ goes to $0$, pointing out, long before this concept has been formalized,
the tame behaviour of the local
normalized volume of analytic sets.
Ten years after Lelong's pioneering paper, Thie proved in
\cite{Thie} that the local density of a
complex analytic subset $X$ at a point $a$ is a positive integer
by expressing it as a sum
of local densities of the components, counted with multiplicities,
of the tangent cone
of $X$ at $a$. Finally, more than twenty years after Lelong's
definition,  Draper proved \cite{Draper} that the
local density is the algebraic multiplicity of the local ring of
$X$ at $a$.
The definition of local densities has been extended by
Kurdyka and Raby to real subanalytic subsets of $\RR^n$ in \cite{KR}
(see also \cite{KPR}). In fact,
although
 the arguments in \cite{KR} and \cite{KPR}
 which
prove
the existence of the density in the real subanalytic case
were
given before the notion of definable sets in
o-minimal structures
 emerged, they apply to the
real o-minimal setting. A short
proof of this existence result, again produced just before the concept
of tame definable sets
and involving the Cauchy-Crofton formula, may be found
in the seminal paper \cite{Lio} for semi-pfaffian sets.
In \cite{lk} (Theorem 1.3), one can find a proof
in the real o-minimal setting of the existence of the
local density viewed as the higher term of a finite
sequence of localized curvature invariants  involving the Cauchy-Crofton
formula and the theory of regular stratifications.
Of course, in the real  setting, the density
is in general no longer a positive integer, but a non
negative real number, and
Kurdyka and Raby proved an appropriate extension of  Thie's result by
 expressing again the local density in terms of the density of some
components of the real tangent cone. The existence of the local density
at each point of  the closure of a subanalytic set is a manifestation of
tameness of these
sets near their singular points. Similarly,
tameness in subanalytic geometry is also illustrated by the
tame behavior of
local density, viewed as a function of
the base-point of the germ at which it is computed:
it is actually proved in \cite{coliro} that this function  along a given
global subanalytic set is a Log-analytic function, that is, a polynomial
in subanalytic functions and their logarithms (see Siu's paper \cite{Siu}
for similar results in the complex case).
 Draper's result has been extended to the real setting by Comte in
 \cite{comte} in the following way. Recall that if $X$ is a complex
analytic subset   of the affine space
 of complex  dimension $d$ at $x$, the algebraic multiplicity of the
local ring
of $X$ at $x$
 is equal to the local degree of  a generic linear projection $p: X \rightarrow \CC^d$, that is, to the
  number of points near $x$ in a generic fiber of $p$.
  Over the reals, if $X$ is of local dimension $d$ at $x$,
  the number of points in fibers (near $x$) of a
  generic linear projection $p: X \rightarrow \RR^d$ is not generically constant near
  $p (x)$ in general.  The idea introduced in    \cite{comte}
to overcome this difficulty is to consider
  as a substitute for the local degree of $p$ the sum
  $d (p) :=\sum_{i \in \NN} i \cdot\theta_i$,
  where $\theta_i$ is the local density at $p(x)$ of the germ of the set of points in
  $\RR^d$ over which the  fiber of $p$ has exactly $i$ points near $x$.
  The so-called {\sl local Cauchy-Crofton formula} proved in
   \cite{comte} states that the average along all linear projections $p$
  of the degrees $d (p)$ is equal to the local density of $X$ at $x$ and
  can be considered as the real analogue of Draper's result.
  Finally, the complete multi-dimensional version of the local
Cauchy-Crofton formula for real subanalytic sets is presented in \cite{lk}
(Theorem 3.1),
where the multi-dimensional substitute of the $0$-dimensional
local degree $d(p)$ is obtained by considering the local
Euler characteristic of generic multi-dimensional fibers,
instead of the local number of points.

 Now let $K$ be a finite extension of $\QQ_p$ and
 $X$ be a definable subset
 (semi-algebraic or subanalytic)
  of
 $K^n$.
 Let $x$ be a point of $K^n$. When one tries to define the local density
 of $X$ at $x$ similarly to the archimedean case, one is faced to the
problem, illustrated in  \ref{fstart},  that the limit of local volumes
in general no longer exists. It appears that the normalized volumes
$v_n$ of $X$ in the balls
$B(x,n):=\{w\in K\mid \ord (x-w)\geq n\}$
 has a periodic
convergence, that is to say, there exists an integer $e>0$, such that
for all $c=0, \cdots, e-1$, $(v_{c+m\cdot e})_{m\in \NN} $ has a limit
$v_c$
in $\QQ $  (see Proposition \ref{lem:pos}), with possibly
$v_c\not=v_{c'}$, for $c\not=c'$.
 The reader having essentially in mind the real case
is thus strongly encouraged to start  reading this article by the
example studied in \ref{fstart} that emphasizes this phenomenon.
 We resolve that issue by using an appropriate
 renormalization device that leads us to express the mean value
$\displaystyle \frac{1}{e}\cdot \sum_{c=0}^{e-1}
v_c$ as the local density of $X$ at $x$.

Another new issue occurring in the $p$-adic
setting is the lack of a
natural
 notion of a tangent cone.
Unlike the real case where only the action of the multiplicative
group $\RR_+^\times$ has to be considered, in the $p$-adic case,
there seems to be
no preferential subgroup of $K^\times$ at hand.
We remedy this by introducing, for each definable open subgroup of
finite index
$\Lambda$  in $K^{\times}$, a tangent cone $C_x^{\Lambda}$ at $x$ which
is stable by homotheties
in $\Lambda$, that is,
which is
a $\Lambda$-cone.
 One should note that such
 $\Lambda$-cones were already considered more than twenty years ago  in
the work of Heifetz on
$p$-adic oscillatory integrals and wave front sets \cite{Heifetz}.
Nevertheless, we prove in
Theorem
\ref{distinguished cone} that, given a definable subset $X$,
among these cones, some are distinguished as
maximal for an inclusion property, and appear as the good tangent cones
to be considered, in the sense that they capture the local
geometry of our set. We are then able, by deformation to the tangent cone, to assign
multiplicities to points in the tangent cone $C_x^{\Lambda}$.

Our main results regarding $p$-adic local densities are
Theorem \ref{mt}, which is a $p$-adic analogue of the result
of Thie and Kurdyka-Raby, and Theorem \ref{lcc}, which is
a $p$-adic analogue of Comte's  local Cauchy-Crofton formula.
An important technical tool in our proof of
Theorem \ref{mt} is provided by Theorem
\ref{Lipschitz decomposition} which allows us to
decompose
our definable set into Lipschitz graphs. Such a regular decomposition
has been obtained in
 \cite{CCL} and extends to the $p$-adic setting
a real subanalytic result of \cite{KurWExp}.
In section \ref{swf} we prove the existence of $(w_f)$-regular stratifications for
definable $p$-adic functions, and consequently the existence of
Thom's $(a_f)$-regular stratifications for definable $p$-adic functions,
$(w)$-regular, or Verdier regular,  stratifications,
and Whitney's $(b)$-regular
stratifications for $p$-adic
definable sets (Theorem \ref{wf}).
\medskip

{During the preparation of this paper, the authors have been partially supported by grant
ANR-06-BLAN-0183. We also thank the Fields Institute of Toronto, where this
paper was partly written, for bringing us exceptional working conditions.}

\tableofcontents

\section{Preliminaries}

\subsection{Definable sets over the $p$-adics}\label{defpadic} Let $K$ be a finite field extension of $\QQ_p$ with valuation ring
$R$. We denote by $\ord$ the valuation and set $\vert x \vert :=
q^{- \ord (x)}$ and $\vert 0 \vert = 0$, with $q$ cardinality of
the residue field of $K$. If $x = (x_i)$ is a point in $K^m$ and
$n$ is an integer, we denote by $B(x,n)$ the ball
in $K^m$ given by the conditions
$\ord (z_i -
x_i) \geq n$, $1 \leq i \leq m$.

We recall the notion of (globally) subanalytic subsets of $K^n$
and of semi-algebraic subsets of $K^n$. Let $\cL_{\rm
Mac}=\{0,1,+,-,\cdot,\{P_n\}_{n>0}\}$ be the language of Macintyre and
$\cL_{\rm an}=\cL_{\rm Mac}\cup
\{^{-1},\cup_{m>0}K\{x_1,\ldots,x_m\}\}$, where $P_n$ stands for the
set of $n$th powers in $K^\times$,
where $^{-1}$ stands for the field inverse extended on $0$ by $0^{-1}=0$,
where $K\{x_1,\ldots,x_m\}$ is
the ring of restricted power series over $K$ (that is, formal power
series converging on
$R^m$),
 and each element $f$ of $K\{x_1,\ldots,x_m\}$ is interpreted
as the restricted analytic function $K^m\to K$ given by
\begin{equation}
x\mapsto
\begin{cases}
 f(x) & \mbox{if }x\in
R^m \\
0 & \mbox{else.}
\end{cases}\end{equation}
By subanalytic we mean $\cL_{\rm an}$-definable with coefficients
from $K$ and by semi-algebraic we mean $\cL_{\rm Mac}$-definable
with coefficients from $K$. Note that subanalytic,
resp.~semi-algebraic, sets can be given by a quantifier free formula
with coefficients from $K$ in the language $\cL_{\rm Mac}$,
resp.~$\cL_{\rm an}$.

In this section we let $\cL$ be either the language $\cL_{\rm
Mac}$ or $\cL_{\rm an}$ and by $\cL$-definable we will mean
semi-algebraic, resp.~subanalytic when $\cL$ is $\cL_{\rm Mac}$,
resp.~$\cL_{\rm an}$. Everything in this paper will hold for
both languages and we will give  appropriate references for
both languages when needed.

For each definable set $X\subset K^n$, let $\cC(X)$ be the
$\QQ$-algebra of functions on $X$ generated by functions $|f|$ and
$\ord(f)$ for all definable functions $f:X\to
K^\times$.

We refer to \cite{sd} and \cite{DvdD} for
the definition of the dimension of $\cL$-definable sets.

\subsection{The $p$-adic measure}\label{pmes}
Suppose that $X\subset K^n$ is an $\cL$-definable set of dimension
$d\geq 0$. The set $X$ contains a definable nonempty  open
$K$-analytic submanifold $X'\subset K^n$ such that $X\setminus X'$ has dimension
$<d$, cf.~\cite{DvdD}. There is a canonical $d$-dimensional
measure $\mu_d$ on $X'$ coming from the embedding in $K^n$, which is
constructed as follows, cf. \cite{serre}. For each $d$-element
subset $J$ of $\{1,\ldots,n\}$, with $j_i<j_{i+1}$, $j_i$ in $J$,
let $dx_J$ be the $d$-form $dx_{j_1}\wedge\ldots\wedge dx_{j_d}$ on
$K^n$, with $x=(x_1,\ldots,x_n)$ standard global coordinates on
$K^n$. Let $x_0$ be a point on $X'$ such that $x_I$ are local
coordinates around $x_0$ for some $I\subset\{1,\ldots,n\}$. For each
$d$-element subset $J$ of $\{1,\ldots,n\}$ let $g_J$ be the
$\cL$-definable function determined at a neigborhood of $x_0$ in
$X'$ by $g_Jdx_I=dx_J$. There is a unique volume form
$\vert \omega_{0}\vert_{X'}$ on $X'$ which is  locally equal to
$(\max_J|g_J|) \vert dx_I \vert$ around every point $x_0$ in $X'$.
Indeed, $\vert \omega_{0}\vert_{X'}$ is equal to $\sup_{J}\vert dx_J
\vert$. The canonical $d$-dimensional measure $\mu_d$ on $X'$ (cf.
\cite{serre} \cite{oesterle}), is the one induced by the volume form
$\vert \omega_{0}\vert_{X'}$. We extend this measure to $X$ by zero
on $X \setminus X'$
and still denote it by $\mu_d$.

\subsection{Adding sorts}\label{sr}
By analogy with the motivic framework, we now expand the language
$\cL$ to a three sorted language $\cL'$ having  $\cL$ as language
for the valued field sort, the ring language $\LL_{\rm Rings}$ for
the residue field, and the Presburger language $\LL_{\rm PR}$ for
the value group together with maps $\ord$ and $\ac$ as in \cite{cons}.
By taking the product of the measure $\mu_m$ with the
counting measure  on $k_K^n \times \ZZ^r$ one defines a measure
still denoted by $\mu_m$ on $K^m \times k_K^n \times \ZZ^r$.

One defines the dimension of an
$\cL'$-definable subset $X$ of $K^m\times k_K^n\times\ZZ^r$ as the
dimension of its projection  $ p(X) \subset K^m$. If $X$ is of dimension $d$,
one defines a measure $\mu_d$ on $X$ extending the previous construction  on $X$ by setting
\begin{equation}
\mu_d (W) := \int_{ p (X)} p_! (\11_W) \mu_d
\end{equation}
with $p_! (\11_W)$ the function $y \mapsto {\rm card}(p^{-1}(y)
\cap W)$.

For such an $X$, one defines $\cC (X)$ as the $\QQ$-algebra of
functions on $X$ generated by functions $\alpha$ and $q^{- \alpha}$
with $\alpha : X \rightarrow \ZZ$ definable in $\cL'$. Note that
this definition coincides with the previous one when $n = r = 0$.
Since $\cL'$ is interpretable in $\cL$, the formalism developed in
this section extends to $\cL'$-definable objects in a natural way.

\subsection{$p$-adic Cell Decomposition}
Cells are defined by induction on the number of variables
 \begin{definition}\label{def::cell}
An $\cL$-cell $A\subset K$ is a (nonempty) set of the form
\[
\{t\in K\mid |\alpha|\square_1 |t-c|\sq_2 |\beta|,\
  t-c\in \lambda P_n\},
\]
with constants $n>0$, $\lambda,c$ in $ K$, $\alpha,\beta$ in
$K^\times$, and $\square_i$ either $<$ or no condition. An $\cL$-cell
$A\subset K^{m+1}$, $m\geq0$, is a set  of the form
 \begin{equation}\label{Eq:cell:decay}
 \begin{array}{ll}
\{(x,t)\in K^{m+1}\mid
 &
 x\in D, \  |\alpha(x)|\sq_1 |t-c(
 x)|\sq_2 |\beta(x)|,\\
 &
  t-c(x)\in \lambda P_n\},
 \end{array}
 \end{equation}
 with $(x,t)=(x_1,\ldots,
x_m,t)$, $n>0$, $\lambda$ in $ K$, $D=p_m(A)$ a cell where $p_m$
is the projection $K^{m+1}\to K^m$, $\cL$-definable functions
$\alpha,\beta:K^m\to K^\times$ and $c:K^m\to K$, and $\square_i$
either $<$ or no condition, such that the functions
$\alpha,\beta$, and $c$ are analytic on $D$. We call $c$ the
center of the cell $A$ and $\lambda P_n$ the coset of $A$. In
either case, if $\lambda=0$ we call $A$ a $0$-cell and if
$\lambda\not=0$ we call $A$ a $1$-cell.
 \end{definition}

In the
$p$-adic semi-algebraic case, Cell Decomposition Theorems are  due to Cohen \cite{cohen}
and Denef \cite{D84}, \cite{Dcell} and they were extended in
\cite{Ccell}
to the subanalytic setting where one can find the
following version.
\begin{theorem}[$p$-adic Cell Decomposition]\label{thm:CellDecomp}
Let $X\subset K^{m+1}$ and $f_j:X\to K$ be $\cL$-definable for
$j=1,\ldots,r$. Then there exists a finite partition of $X$ into
$\cL$-cells $A_i$ with center $c_i$ and coset $\lambda_i P_{n_i}$
such that
 \begin{equation*}
 |f_j(x,t)|=
 |h_{ij}(x)|\cdot|(t-c_i(x))^{a_{ij}}\lambda_i^{-a_{ij}}|^\frac{1}{n_i},\quad
 \mbox{ for each }(x,t)\in A_i,
 \end{equation*}
with $(x,t)=(x_1,\ldots, x_m,t)$, integers $a_{ij}$, and
$h_{ij}:K^m\to K$ $\cL$-definable functions which are analytic on
$p_m(A_i)$, $j=1,\ldots,r$. If $\lambda_i=0$, we use the
convention that $a_{ij}=0$.
 \end{theorem}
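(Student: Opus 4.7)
The plan is to prove this by induction on $m$, following Cohen and Denef's classical approach, augmented by Weierstrass preparation in the subanalytic case.

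For the base case $m=0$, I would invoke quantifier elimination (Macintyre's theorem for $\cL_{\rm Mac}$, its extension by Denef and van den Dries for $\cL_{\rm an}$) to reduce the data to a finite Boolean combination of conditions of the form $|g(t)|\,\sq\,|h(t)|$ and $P_n(g(t))$, where $g$ and $h$ are polynomials in the semi-algebraic setting and restricted analytic functions in the subanalytic setting. Since $K$ is Henselian, each such $g$ (after Weierstrass preparation in the analytic case) factors as a unit times a polynomial whose roots lie in a finite extension of $K$. Partitioning $K$ according to which $K$-rational root is closest to $t$ in valuation, and then refining by the cosets modulo $P_n$ (a finite index subgroup of $K^{\times}$), yields a finite partition into cells on which $|f_j(t)| = |h_{ij}|\cdot|(t-c_i)^{a_{ij}}\lambda_i^{-a_{ij}}|^{1/n_i}$.

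For the inductive step, I would first reduce to a single function $f = f_1$ by refining partitions, then apply quantifier elimination in the variable $t$ with $x \in K^m$ as parameters. This expresses $f(x,t)$ through polynomial or restricted analytic expressions in $t$ whose coefficients are $\cL$-definable functions of $x$. A parametric Weierstrass preparation, applied after pulling back to a definable partition of $K^m$ supplied by the inductive hypothesis on which the leading behaviour is uniform, writes each building block as $u(x,t)\cdot P(x,t)$ with $u$ a unit and $P$ monic in $t$. A uniform Hensel's lemma then factors $P$ into linear factors $t - c_s(x)$ with $c_s$ analytic on pieces $D \subset K^m$ obtained by further definable refinement.

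The main obstacle is the bookkeeping: different roots of $P$ may coincide or collide in valuation over exceptional definable subsets of $K^m$, and the unit factor $u(x,t)$ must itself be re-analyzed in $t$ so that $|u(x,t)|$ becomes a function of $x$ alone on suitable cells. This is handled, as in \cite{Dcell} and \cite{Ccell}, by an iterative ``precell'' construction that alternates between refining in $x$ and refining in $t$, combined with a Presburger-style refinement of the valuation $\ord(t - c_i(x))$ to bring each cell into the prescribed coset $\lambda_i P_{n_i}$. Once this bookkeeping is done, the integers $a_{ij}$ and the definable analytic functions $h_{ij}(x)$ are read off directly from the final factorization, and the convention $a_{ij} = 0$ when $\lambda_i = 0$ absorbs the degenerate $0$-cell case.
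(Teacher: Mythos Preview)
The paper does not prove this theorem at all: it is stated as a known result, with the semi-algebraic case attributed to Cohen and Denef (\cite{cohen}, \cite{D84}, \cite{Dcell}) and the subanalytic extension to \cite{Ccell}. So there is no ``paper's own proof'' to compare against.

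Your sketch is a reasonable outline of how those original proofs go --- induction on the number of parameters, quantifier elimination to reduce to atomic conditions, Weierstrass preparation (in the analytic case) followed by Hensel-type root approximation to get centers $c_i(x)$, and then a Presburger-style refinement to sort into cosets $\lambda_i P_{n_i}$. Two caveats worth flagging. First, in the subanalytic case the roots of the Weierstrass polynomial need not individually be definable in $\cL_{\rm an}$ (they may lie in a finite extension), so one does not literally factor into linear terms $t-c_s(x)$; rather one uses that the \emph{valuation} of the polynomial is controlled by the closest root, and then approximates that root by a definable center --- this is where the fractional exponent $1/n_i$ and the coset $\lambda_i P_{n_i}$ genuinely enter. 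Second, your phrase ``reduce to a single function $f=f_1$ by refining partitions'' hides a non-trivial point: one must treat all the $f_j$ simultaneously (and all the auxiliary functions arising from the description of $X$) so that a common cell decomposition works; this is handled in \cite{Dcell} and \cite{Ccell} by first collecting all relevant polynomials/analytic functions and then running the preparation on the whole list at once. With those adjustments, your outline matches the literature.
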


Let us also recall the following lemma from \cite{Cexp}.
\begin{lem}\label{prop:descrip:simple}
Let $X\subset K^{m+1}$ be $\cL$-definable and let $G_j$ be
functions in $\cC(X)$
in
the variables $(x_1,\ldots,x_m,t)$ for
$j=1,\ldots,r$. Then there exists a finite partition of $X$ into
$\cL$-cells $A_i$ with center $c_i$ and coset $\lambda_i P_{n_i}$
such that each restriction $G_j|_{A_i}$ is a finite sum of
functions of the form
\[
|(t-c_i(x))^{a}\lambda_i^{-a}|^\frac{1}{n_i}\ord (t-c_i(x))^{s}h(x),
\]
where $h$ is in $\cC(K^m)$, and $s\geq 0$ and $a$ are integers.
\end{lem}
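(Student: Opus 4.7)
The plan is to reduce to the case of a single monomial in the generators of $\cC(X)$, apply Theorem \ref{thm:CellDecomp} simultaneously to all valued-field functions appearing in these monomials, and then convert the resulting multiplicative cell formulas into additive ones for $\ord$.

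First, since $\cC(X)$ is the $\QQ$-algebra generated by functions of the form $|f|$ and $\ord(g)$ for definable $f,g:X\to K^\times$, each $G_j$ is a finite $\QQ$-linear combination of monomials
\[
|F_{j,\nu}|\cdot\prod_l\ord(g_{j,\nu,l})^{\beta_{j,\nu,l}},
\]
where I have absorbed all absolute-value factors into a single $|F_{j,\nu}|$ with $F_{j,\nu}=\prod_k f_{j,\nu,k}^{\alpha_{j,\nu,k}}$ still definable and $K^\times$-valued. Since the conclusion is stable under finite sums and common refinements of the partition, it suffices to prove the statement for one such monomial. I would then apply Theorem \ref{thm:CellDecomp} at once to the finite family $\{F_{j,\nu}\}\cup\{g_{j,\nu,l}\}$, obtaining a partition of $X$ into $\cL$-cells $A_i$ with centres $c_i$ and cosets $\lambda_i P_{n_i}$ on which each of these functions admits a description $|h(x)|\cdot\bigl|(t-c_i(x))^{a}\lambda_i^{-a}\bigr|^{1/n_i}$ with $h$ definable and analytic on $p_m(A_i)$.

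The main technical step is to convert the cell formula for each $|g_{j,\nu,l}|$ into an additive expression for $\ord(g_{j,\nu,l})$. On a $1$-cell the coset condition $t-c_i(x)\in\lambda_i P_{n_i}$ forces $\ord(t-c_i(x))-\ord(\lambda_i)\in n_i\ZZ$, so taking $\ord$ of the multiplicative identity yields
\[
\ord\bigl(g_{j,\nu,l}(x,t)\bigr)=\widetilde\alpha(x)+\frac{b}{n_i}\,\ord(t-c_i(x)),
\]
where $\widetilde\alpha\in\cC(p_m(A_i))$ absorbs $\ord(h(x))$ together with the constant $-(b/n_i)\ord(\lambda_i)$, and the right-hand side is an integer thanks to the divisibility. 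Substituting this back into the monomial for $G_j|_{A_i}$ and expanding the product $\prod_l\bigl(\widetilde\alpha_l(x)+\gamma_l\ord(t-c_i(x))\bigr)^{\beta_{j,\nu,l}}$ via the binomial theorem yields a finite sum $\sum_{s\geq 0}\tilde h_s(x)\,\ord(t-c_i(x))^s$ with $\tilde h_s\in\cC(p_m(A_i))$. Multiplying by the cell formula for $|F_{j,\nu}|$ then gives exactly the required form. On a $0$-cell one has $t=c_i(x)$, so $G_j|_{A_i}$ depends only on $x$ and lies in $\cC(p_m(A_i))$; one takes $a=s=0$, consistent with the convention in Theorem \ref{thm:CellDecomp}.

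The main obstacle is not conceptual but organizational: one must carry out the cell decomposition once for the entire finite family of auxiliary definable functions so that a single partition simultaneously serves every $F_{j,\nu}$ and $g_{j,\nu,l}$, and one must verify that the a priori rational exponent $b/n_i$ produces integer values of $\ord(g_{j,\nu,l})$, which is exactly what the coset condition built into the notion of an $\cL$-cell guarantees. Once these bookkeeping points are settled, absorbing constant shifts by $\ord(\lambda_i)$ into functions in $\cC(p_m(A_i))$ is routine and the binomial expansion closes the argument.
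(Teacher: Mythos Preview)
The paper does not actually prove this lemma; it merely recalls it from \cite{Cexp} (Cluckers, \emph{Multi-variate Igusa theory}, IMRN 2004). Your argument is a correct reconstruction of the standard proof: reduce to a single monomial in the generators of $\cC(X)$, apply Theorem~\ref{thm:CellDecomp} simultaneously to all the valued-field functions involved, take $\ord$ of the resulting multiplicative identities to get affine expressions in $\ord(t-c_i(x))$, and expand. The two points you flag (one cell decomposition for all auxiliary functions at once; the coset condition $t-c_i(x)\in\lambda_i P_{n_i}$ forcing integrality of the a priori rational expression) are exactly the ones that matter, and you handle them correctly. The only cosmetic remark is that your $\widetilde\alpha_l(x)=\ord(h_l(x))-\tfrac{b_l}{n_i}\ord(\lambda_i)$ need not be integer-valued, but this is harmless since $\cC(K^m)$ is a $\QQ$-algebra and the final expression for $G_j|_{A_i}$ is integer-valued because $G_j$ is.
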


The following $p$-adic curve selection lemma is due to van den Dries
and Scowcroft \cite{sd} in the semi-algebraic case and to Denef and
van den Dries \cite{DvdD} in the subanalytic case. The statement is
the $p$-adic counterpart of the semi-algebraic or subanalytic curve selection
lemma over the reals.

\begin{lem}[Curve Selection]\label{curve}
Let $A$ be a definable subset of $K^n$ and let $x$ be in $\overline
A$. Then there exists a definable function $f=(f_1,\ldots,f_n):R\to
K^n$ such that the $f_i$ are given by power series (over $K$)
converging on $R$, such that $f(0)=x$, and such that $f(R\setminus
\{0\})\subset A$.
\end{lem}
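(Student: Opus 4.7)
My strategy is a two-step construction. First I produce a \emph{continuous} definable curve $y\colon R\to K^n$ with $y(0)=x$ and $y(R\setminus\{0\})\subset A$, using definable choice and the $p$-adic cell decomposition. Then I upgrade this curve to a restricted power-series map $f$ by applying Theorem \ref{thm:CellDecomp} componentwise to its source coordinates, extracting $N$-th roots via Hensel's lemma, and substituting $s=\tau^N$ for a suitable integer $N$.

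\emph{Step 1 (continuous definable curve).} Since $x\in\overline A$, for every $s\in R\setminus\{0\}$ the $\cL$-definable set $A\cap B(x,\ord s)$ is nonempty. Definable Skolem functions are available in both the semi-algebraic and subanalytic $p$-adic theories, as a standard consequence of Theorem \ref{thm:CellDecomp} (and Macintyre quantifier elimination in the semi-algebraic case). Applied to the family $\{A\cap B(x,\ord s) : s\in R\setminus\{0\}\}$, such a choice yields a definable $y\colon R\setminus\{0\}\to K^n$ with $y(s)\in A$ and $\ord(y_i(s)-x_i)\geq \ord s$ for every $i$. Extending by $y(0):=x$ produces a continuous definable map $R\to K^n$, since each coordinate satisfies $\ord(y_i(s)-x_i)\to\infty$ as $\ord s\to\infty$.

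\emph{Step 2 (analytic upgrade).} I would then apply Theorem \ref{thm:CellDecomp} and Lemma \ref{prop:descrip:simple} to each of the coordinates $y_i\colon R\to K$, viewed as $\cL$-definable functions of the one variable $s\in R$, after first refining so that all of them share a common cell decomposition of the source. On some ball $B(0,M)$ around $0$ one then obtains simultaneous explicit descriptions: the norms $|y_i(s)-x_i|$ are of the form $|h_i(s)|\cdot|s^{a_i}|^{1/N}$ for a common modulus $N$ (chosen as a least common multiple of the individual moduli $n_i$), and $s$ is constrained to lie in a single coset of $P_N$. On that coset Hensel's lemma furnishes a single-valued $N$-th root, so the substitution $s=\tau^N$ removes the fractional exponents and turns each $y_i(\tau^N)$ into a restricted analytic function of $\tau$ on a small ball. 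A final reparametrization $\tau\mapsto\pi^L\tau$ by a sufficiently large power of a uniformizer $\pi$ of $R$ transports that small ball to all of $R$ and delivers the required $f\in K\{\tau\}^n$ with $f(0)=x$ and $f(R\setminus\{0\})\subset A$.

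The main obstacle is making Step 2 simultaneous across all coordinates: Theorem \ref{thm:CellDecomp} applied to each $y_i$ separately produces cells with different moduli $n_i$ and different cosets of $P_{n_i}$, so one must refine the decomposition and restrict to a sub-ball of $R$ on which every $y_i-x_i$ is compatible with the common modulus $N$. Once this bookkeeping is arranged, the $N$-th root extraction by Hensel's lemma, the substitution $s=\tau^N$, and the rescaling by $\pi^L$ are routine, and the resulting power-series curve is the one claimed by the lemma.
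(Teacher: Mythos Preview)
The paper does not actually prove this lemma: immediately before the statement it attributes the result to van den Dries--Scowcroft \cite{sd} (semi-algebraic case) and Denef--van den Dries \cite{DvdD} (subanalytic case), and gives no argument of its own. So there is no in-paper proof to compare against; your proposal must stand on its own.

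Your Step~1 is fine: definable Skolem functions do exist in both theories, and the resulting $y$ is continuous and definable with the required properties.

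Step~2, however, has a genuine gap. Theorem~\ref{thm:CellDecomp} (and Lemma~\ref{prop:descrip:simple}) only control the \emph{norms} $|y_i(s)-x_i|$ on each cell; they say nothing about the analyticity of $y_i$ itself. Your sentence ``the substitution $s=\tau^N$ removes the fractional exponents and turns each $y_i(\tau^N)$ into a restricted analytic function'' is a non sequitur: making $|y_i(s)-x_i|$ an integral power of $|\tau|$ does not force $y_i(\tau^N)$ to be given by a convergent power series. A definable function can have perfectly tame norm behaviour while still failing to be analytic on a cell (think of a function that jumps between cosets according to an angular component condition). What you actually need is a structural description of the \emph{function}, not its norm. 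In the paper's framework that input is Lemma~\ref{terms}: each $y_i$ is piecewise an $\cL^\ast$-term, built from field operations, inversion, $n$-th roots, Henselian root-picking functions, and (in the subanalytic case) restricted analytic functions. Once you restrict to a single piece containing $0$ in its closure and substitute $s=\tau^N$ for $N$ divisible by all the root indices occurring in the term, every constituent of the term becomes a genuine convergent power series near $0$, and hence so does $y_i$. This is essentially the mechanism behind the proofs in \cite{sd} and \cite{DvdD}; your outline would become correct if you replaced the appeal to cell decomposition of norms by an appeal to Lemma~\ref{terms} and an induction on term complexity.
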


The following is a $p$-adic analogue of a classical lemma by Whitney (see \cite{Whitney}).
\begin{lem}[$p$-adic  Whitney Lemma]\label{lemWhitney}Let $g:R\to K^n$ be a map given by $n$ analytic power series over
$K$, converging on $R$, such that the map $g$ is nonconstant.
Then, the limit $\ell\in\PP^{n-1}(K)$ for $r\to 0$ of the lines
$\ell_r\in\PP^{n-1}(K)$ connecting $g(0)$ with $g(r)$ exists. Also
the limit $\ell'$ of the tangent lines $\ell_r':=\{g(r)+\lambda
(\partial g_1/\partial r,\ldots,\partial g_1/\partial r)_{|r}\mid
\lambda\in K\}$ for $r \to 0 $ exists and $\ell'=\ell$.
\end{lem}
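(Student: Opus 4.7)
The plan is to reduce to a direct Taylor expansion argument. First I would translate, replacing $g$ by $g - g(0)$, so that $g(0) = 0$ and the secant line $\ell_r$ becomes the line through the origin and $g(r)$, spanned by $g(r)$ itself. Since each component $g_i$ is a power series convergent on $R$, I can write
\begin{equation*}
g(r) = \sum_{k \geq 1} a_k r^k, \qquad a_k \in K^n.
\end{equation*}
Because $g$ is nonconstant, there is a smallest integer $m \geq 1$ with $a_m \neq 0$. Factoring out $r^m$ gives $g(r) = r^m h(r)$ with $h(r) := \sum_{k \geq 0} a_{m+k} r^k$ a convergent $K^n$-valued power series satisfying $h(0) = a_m \neq 0$.

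Next I would handle the secant limit. In $\PP^{n-1}(K)$, the class $[g(r)] = [r^m h(r)] = [h(r)]$ for $r \neq 0$ with $h(r) \neq 0$. Since $h$ is continuous on $R$ and $h(0) = a_m \neq 0$, there is a neighborhood of $0$ on which $h$ is nonvanishing, and $[h(r)] \to [h(0)] = [a_m]$ in the (non-archimedean) projective topology as $r \to 0$. Hence $\ell := \lim_{r \to 0} \ell_r$ exists and equals the line spanned by $a_m$.

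For the tangent line I would differentiate term-by-term, which is valid for convergent power series on $R$:
\begin{equation*}
g'(r) = \sum_{k \geq m} k\, a_k r^{k-1} = r^{m-1}\, \tilde h(r), \qquad \tilde h(r) := \sum_{k \geq 0} (m+k)\, a_{m+k} r^k,
\end{equation*}
so $\tilde h(0) = m\, a_m \neq 0$ (using that the residue characteristic plays no role since $m \cdot a_m \neq 0$ as an element of $K^n$, $m$ being a nonzero integer and $a_m \neq 0$). Exactly as above, $[g'(r)] = [\tilde h(r)] \to [\tilde h(0)] = [m\, a_m] = [a_m]$, so $\ell' := \lim_{r \to 0} \ell_r'$ exists and $\ell' = \ell$.

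The only real subtlety is the convergence in $\PP^{n-1}(K)$: one must know that if $v(r) \to v_0$ in $K^n$ with $v_0 \neq 0$, then $[v(r)] \to [v_0]$ in $\PP^{n-1}(K)$. This is immediate from the standard description of $\PP^{n-1}(K)$ via affine charts (on the chart where the $i$th coordinate of $v_0$ is nonzero, all nearby $v(r)$ have nonvanishing $i$th coordinate, and one divides through). There is no characteristic obstruction to the identity $g'(r) = r^{m-1}\tilde h(r)$, since $K$ has characteristic zero, so $m a_m \neq 0$. I do not see any other genuine obstacle; the lemma is in effect the non-archimedean transcription of the classical Whitney argument, and the only thing it uses beyond elementary power series manipulations is the continuity of the projective quotient map on the open locus of nonzero vectors.
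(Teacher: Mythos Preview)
Your proof is correct and follows essentially the same approach as the paper: translate to $g(0)=0$, factor out the lowest power $r^m$ from both $g(r)$ and $g'(r)$, and observe that the resulting continuous $K^n$-valued functions are nonzero at $0$ and hence converge projectively to $[a_m]$. Your explicit remark that $m\,a_m\neq 0$ relies on characteristic zero is a detail the paper leaves implicit.
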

\begin{proof}
Since $g$ is nonconstant, for $r\not=0$ close to $0$ one has
$g(r)\not = g(0)$ and $(\partial g_1/\partial r,\ldots,\partial
g_1/\partial r)_{|r}\not =0$, and hence, $\ell_r$ and $\ell'_r$ are
well-defined for $r\not=0$ close to $0$. We may suppose that
$g(0)=0$ and that each of the $g_i$ is nonconstant. Write
$g_i(r)=\sum_{j\geq 0} a_{ij}r^j$ with $a_{ij}\in K$ and for each
$i$, let $k_i$ be the smallest index $j$ such that $a_{ij}\not=0$.
Then $k_i>0$ for each $i$ since $g(0)=0$. Let $k$ be the minimum of
the $k_i$.
 Then clearly $\ell$ and $\ell'$ are the same line $\ell$
connecting $0$ and $(a_{1k},\ldots,a_{nk})\not=0$. Indeed, the line
$\ell_r$ connects $0$ and $g(r) $ which is equivalent to connecting
$0$ and $g(r)/r^k$; the point $g(r)/r^k$ converges to
$(a_{1k},\ldots,a_{nk})$ and thus $\ell_r$ converges to the line
$\ell$. Likewise, the line $\ell'_r$ connects $0$ and $(\partial
g_1/\partial r,\ldots,\partial g_1/\partial r)_{|r}$ which is
equivalent to connecting $0$ and
 $$
 \frac{1}{kr^k} (\partial g_1/\partial r,\ldots,\partial
 g_1/\partial r)_{|r};
$$
 the point $\frac{1}{kr^k} (\partial
g_1/\partial r,\ldots,\partial g_1/\partial r)_{|r}$ converges to
$(a_{1k},\ldots,a_{nk})$ and thus also $\ell_r'$ converges to $\ell$
when $r\to 0$.
\end{proof}

\subsection{}Fix two integers $d \leq m$. Let $U$ be an open definable subset of
$K^d$ and let $\varphi$ be a definable analytic mapping $U
\rightarrow K^{m - d}$. We view the graph  $\Gamma (\varphi)$ of
$\varphi$ as a definable subset of $K^m$. Let $\varepsilon$ be a
positive real number. We say that $\varphi$
is $\varepsilon$-analytic,
if the norm $| D \varphi | = \max_{i,j}
|\partial \varphi_i/\partial x_j|$ of the differential of $\varphi$
is less or equal than $\varepsilon$ at every point of $U$.

For $\varepsilon>0$, call a function $f:D\to K^{m}$ on a subset $D$
of $K^n$ $\varepsilon$-Lipschitz when for all $x, y \in D$ one has
$$
|f(x)-f(y)|\leq \varepsilon |x-y|.
$$
The function $f$ is called locally $\varepsilon$-Lipschitz when for
each $x\in D$ there exists an open subset $U$ of $K^n$ containing $x$ such
that the restriction of $f$ to $U\cap D$ is $\varepsilon$-Lipschitz.

\begin{lem}\label{ana-lip}
Let $U$ be open in $K^n$ and $f:U\to K^m$ a function which is
$\varepsilon$-analytic. Then $f$ is locally $\varepsilon$-Lipschitz.
\end{lem}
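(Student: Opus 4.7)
The plan is to combine the convergent local power series expansion of $f$ with the ultrametric triangle inequality, which is the standard way to pass from a derivative bound to a Lipschitz bound in $p$-adic analysis. Fix $x_0 \in U$. Since $f$ is analytic at $x_0$, there exists an integer $N$ such that $B(x_0,N) \subset U$ and
$$
f(x_0 + h) = f(x_0) + \sum_{|\alpha|\geq 1} c_\alpha\, h^\alpha
$$
converges for $|h| \leq q^{-N}$, with $c_\alpha \in K^m$ satisfying $|c_\alpha|\, q^{-N|\alpha|} \to 0$ as $|\alpha|\to\infty$. Termwise differentiation identifies $c_{e_j}$ with $\partial f/\partial x_j(x_0)$, so the $\varepsilon$-analyticity hypothesis forces $|c_{e_j}| \leq \varepsilon$ for each $j$.

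I then claim $f$ is $\varepsilon$-Lipschitz on $B(x_0,M)$ for some $M \geq N$ chosen large enough. For any two points $x = x_0 + a$ and $y = x_0 + b$ in $B(x_0,M)$, decompose
$$
f(x) - f(y) \;=\; \sum_{j} c_{e_j}\,(a_j - b_j) \;+\; \sum_{|\alpha|\geq 2} c_\alpha\,(a^\alpha - b^\alpha).
$$
The ultrametric triangle inequality bounds the linear part by $\max_j |c_{e_j}|\,|a_j - b_j| \leq \varepsilon\,|x-y|$. For the higher-order part, the telescoping factorisation of $\prod a_i^{\alpha_i} - \prod b_i^{\alpha_i}$ together with $a_k^{\alpha_k} - b_k^{\alpha_k} = (a_k - b_k)\sum_{i+j=\alpha_k - 1} a_k^i b_k^j$ yields the key estimate $|a^\alpha - b^\alpha| \leq |x-y|\, q^{-M(|\alpha|-1)}$.

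The heart of the proof is then to control the tail uniformly. Extracting from the convergence condition a growth bound $|c_\alpha| \leq C\, q^{N|\alpha|}$, the $\alpha$-th higher-order term satisfies
$$
|c_\alpha\,(a^\alpha - b^\alpha)| \;\leq\; C\, q^{(N-M)(|\alpha|-1) + N}\,|x-y| \;\leq\; C\, q^{2N - M}\,|x-y|
$$
for all $|\alpha|\geq 2$ and $M \geq N$. Choosing $M$ large enough so that $C\,q^{2N-M} \leq \varepsilon$ forces each such term to be at most $\varepsilon\,|x-y|$; a final use of the strong triangle inequality on both sums then gives $|f(x) - f(y)| \leq \varepsilon\,|x-y|$ on $B(x_0,M)$, establishing the local $\varepsilon$-Lipschitz property. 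The only delicate point is this uniform tail estimate, which requires $M$ to be taken large in terms of the local analytic data of $f$ at $x_0$, but this is routine once the power series representation is in hand.
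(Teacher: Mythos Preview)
Your proof is correct and follows essentially the same approach as the paper: expand $f$ as a convergent power series around the given point, use the $\varepsilon$-analyticity to bound the linear coefficients by $\varepsilon$, then shrink the ball so that the higher-order terms (via the telescoping factorisation and the ultrametric inequality) are also bounded by $\varepsilon\,|x-y|$. The paper's argument is more terse, normalising to $\varepsilon=1$ and simply taking $U'=\{|x|<1/N\}$ where $N$ bounds all coefficients, but the mechanism is identical to yours.
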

\begin{proof}
Choose $u\in U$, and a basic neighborhood $U_u$ of $u$ in $U$ such
that the component functions $f_i$ of $f$ are given by converging
power series on $U_u$, where basic neighborhood means a ball of the
form
$c+ \lambda R^n$ with $c\in K^n$ and $\lambda \in K^\times$.
We may suppose
that $U_u=R^n$, that $u=0$, and that $\varepsilon=1$. We may also assume
that for each $i,j$, the partial derivative $\partial
f_j(x)/\partial x_i $ is bounded in norm by $1$ on $U_u$. Since
$|\partial f_j(x)/\partial x_i (0)|\leq 1$, it follows that the
linear term of $f_j$ in $x_i$ has a coefficient of norm $\leq 1$ for
each $i,j$. By the convergence of the power series, the coefficients
of the $f_j$ are bounded in norm, say by $N$, and we can put
$U':=\{x\in R^n\mid |x|<1/N\}$. Clearly $U'$ contains $u=0$. By the
non-archimedean property of the $p$-adic valuation, the restriction
of $f$ to $U'$ is $1$-Lipschitz.
\end{proof}
For more results related to Lipschitz continuity on the $p$-adics, see \cite{CCL} or Theorem \ref{Lipschitz decomposition} below.
The following lemma is a partial converse of Lemma \ref{ana-lip},
especially in view of the fact that any definable function is
piecewise analytic.

\begin{lem}\label{ana-eps}
Let $U$ be a definable open in $K^n$ and let $f:D\to K^m$ be a definable
analytic function which is locally $\varepsilon$-Lipschitz. Then $f$
is $\varepsilon$-analytic.
\end{lem}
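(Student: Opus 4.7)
The plan is to verify the pointwise bound $|\partial f_j/\partial x_i(x_0)| \le \varepsilon$ for every $x_0 \in U$ and every $i,j$ by reading off the linear coefficient of the local power series expansion and comparing it with the Lipschitz bound along the $i$-th coordinate direction.

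Fix an arbitrary $x_0 \in U$. Since $f$ is analytic at $x_0$, on some basic ball $B(x_0,N) \subset U$ each component $f_j$ is given by a convergent power series; after translating we may write
\begin{equation*}
f_j(x_0 + h) \; = \; f_j(x_0) \; + \; \sum_{i=1}^n \frac{\partial f_j}{\partial x_i}(x_0)\, h_i \; + \; R_j(h),
\end{equation*}
where, thanks to the non-archimedean convergence of the series, the remainder satisfies $|R_j(h)| \le C |h|^2$ for some constant $C$ and all sufficiently small $h$. By hypothesis $f$ is locally $\varepsilon$-Lipschitz, so after shrinking the neighborhood if necessary we may also assume $|f(x_0+h)-f(x_0)| \le \varepsilon |h|$ throughout this ball.

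Next, I would test the Lipschitz inequality along the coordinate direction $e_i$: for small $t \in K^\times$, set $h = t\,e_i$. Then
\begin{equation*}
f_j(x_0 + t e_i) - f_j(x_0) \; = \; t\,\frac{\partial f_j}{\partial x_i}(x_0) \; + \; R_j(t e_i),
\end{equation*}
with $|R_j(t e_i)| \le C|t|^2$. Dividing the Lipschitz estimate $|f_j(x_0+te_i)-f_j(x_0)| \le \varepsilon |t|$ by $|t|$ gives
\begin{equation*}
\Bigl| \frac{\partial f_j}{\partial x_i}(x_0) + t^{-1} R_j(t e_i) \Bigr| \; \le \; \varepsilon,
\end{equation*}
and $|t^{-1} R_j(t e_i)| \le C|t| \to 0$ as $|t|\to 0$. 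Passing to the limit (which in the $p$-adic setting is just choosing $|t|$ smaller than $\varepsilon/C$ and using the ultrametric inequality) forces $|\partial f_j/\partial x_i(x_0)| \le \varepsilon$.

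Since $x_0$, $i$ and $j$ were arbitrary, $|Df| \le \varepsilon$ on all of $U$, which is the definition of $f$ being $\varepsilon$-analytic. There is no real obstacle here; the only minor point to be careful about is making sure the neighborhood on which the Lipschitz bound holds is contained in the neighborhood of convergence of the power series, which is automatic after taking a small enough ball around $x_0$.
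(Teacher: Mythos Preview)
Your proof is correct and follows essentially the same approach as the paper's: both identify the partial derivative as the linear coefficient of the local power series and compare it against the Lipschitz bound, using that higher-order terms are negligible for small increments thanks to the ultrametric. The only cosmetic difference is that the paper argues by contradiction after normalizing to $u=0$, $\varepsilon=1$, whereas you argue directly via the quadratic remainder estimate and an explicit application of the ultrametric inequality; neither route has any real advantage over the other.
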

\begin{proof}
We proceed by contradiction. Suppose that $|D f|>\varepsilon$ at
$u\in U$. Choose a basic neighborhood $U_u$ of $u$ in $U$ such that
the component functions $f_i$ of $f$ are given by converging power
series on $U_u$, (here again by basic neighborhood we mean a ball of the form
$c+\lambda R^n$ with $c\in K^n$ and $\lambda \in K^\times$). We may suppose that
$U_u=R^n$, that $u=0$, and that $\varepsilon=1$. By assumption, we have for some $i,j$ that $|(\partial
f_j(x)/\partial x_i)(0)|>1 $, hence, the linear term of $f_j$ in
$x_i$ has a coefficient of norm strictly greater than $1$. By the
convergence of the power series, the coefficients of the $f_j$ are
bounded in norm, say by $N$, and thus for any $x$ in $\{x\in R^n\mid
|x|<1/N\}$ one has $|f(0)-f(x)|>|x|$ which contradicts the fact that
$f$ is $\varepsilon$-Lipschitz with $\varepsilon=1$.
\end{proof}

The following is the $p$-adic analogue of Proposition 1.4 of
\cite{KR}.
\begin{prop}\label{1.4} Let $X$ be a definable subset of dimension
$d$ of $K^m$. For every $\varepsilon > 0$, there exists a definable
subset $Y$ of $X$ of dimension $< d$, $N (\varepsilon) \geq 0$,
definable open subsets $U_i (\varepsilon)$ of $K^d$, for $1 \leq i
\leq N (\varepsilon)$,
definable, $\varepsilon$-analytic functions
$\varphi_i (\varepsilon) : U_i (\varepsilon) \rightarrow K^{m - d}$, with graphs
$\Gamma_i (\varepsilon)$,
and elements $\gamma_1$, \dots, $\gamma_{N (\varepsilon)}$ in
$\GL_m (R)$ such that the sets $\gamma_i ( \Gamma_i (\varepsilon))$ are all disjoint and contained in $X$, and
$$X =  \bigcup_{1 \geq i \geq N (\varepsilon)} \gamma_i (\Gamma_i (\varepsilon))
\cup Y.$$
\end{prop}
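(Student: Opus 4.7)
The plan is to combine three ingredients: the existence of an analytic submanifold structure on the top-dimensional part of $X$; a covering of the Grassmannian $\Gr(d,m)(K)$ by $\GL_m(R)$-translates of a small neighborhood of the standard $d$-plane; and the $p$-adic Cell Decomposition Theorem \ref{thm:CellDecomp} together with Lemma \ref{ana-eps} to extract $\varepsilon$-analytic graphs.

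As recalled in Subsection \ref{pmes}, $X$ contains a definable open $K$-analytic submanifold $X'$ of pure dimension $d$ with $\dim(X\setminus X')<d$, and the tangent space $T_xX'$ defines a definable map $\tau:X'\to\Gr(d,m)(K)$. Set $V_0:=K^d\times\{0\}$ and let $U_0\subset\Gr(d,m)(K)$ be the open subset of $d$-planes of the form $\{(v,L(v)):v\in K^d\}$ for some linear map $L:K^d\to K^{m-d}$ with $|L|<\varepsilon$. The group $\GL_m(R)$ acts on $\Gr(d,m)(K)$, and the action is transitive: given $V\in\Gr(d,m)(K)$, represent it by an $m\times d$ matrix $M$ of $K$-rank $d$, scale by a power of the uniformizer so that $M$ has entries in $R$, and apply Smith normal form $M=UDV'$ with $U\in\GL_m(R)$, $V'\in\GL_d(R)$ and $D$ a diagonal $m\times d$ matrix; over $K$ this yields $V=U(V_0)$. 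Since $\Gr(d,m)(K)$ is compact (via its Pl\"ucker embedding into $\PP^N(K)$), finitely many translates $\gamma_1(U_0),\ldots,\gamma_{N(\varepsilon)}(U_0)$ with $\gamma_i\in\GL_m(R)$ already cover $\Gr(d,m)(K)$.

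I then partition $X'$ into definable subsets $X'_i:=\{x\in X':\tau(x)\in\gamma_i(U_0),\ i\text{ minimal such}\}$. For each $i$, the transformed set $\gamma_i^{-1}(X'_i)$ is a definable analytic $d$-manifold whose tangent spaces all lie in $U_0$; in particular the projection onto the first $d$ coordinates is a local diffeomorphism. Iterating Theorem \ref{thm:CellDecomp} on the last $m-d$ variables, any $d$-cell produced in which one of these variables occurred as a $1$-cell would force the corresponding coordinate axis to lie in the tangent space, contradicting transversality to $\{0\}\times K^{m-d}$. Thus every $d$-cell is a graph of a definable analytic function $\varphi_{i,j}:U_{i,j}\to K^{m-d}$ over an open definable subset $U_{i,j}\subset K^d$, with $|D\varphi_{i,j}|<\varepsilon$ throughout $U_{i,j}$; by Lemma \ref{ana-eps} this $\varphi_{i,j}$ is $\varepsilon$-analytic. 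Setting $Y:=X\setminus\bigsqcup_{i,j}\gamma_i(\Gamma(\varphi_{i,j}))$ collects $X\setminus X'$ and the lower-dimensional residual cells, so $\dim Y<d$; after flattening the double index $(i,j)$ to a single index, one obtains the required decomposition, and disjointness is built in by construction.

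The main obstacle is the Grassmannian step, specifically the transitivity of the $\GL_m(R)$-action on $\Gr(d,m)(K)$: it requires an $R$-basis of a suitable lattice in $V$ that extends to a basis of $R^m$, which is exactly what Smith normal form over the DVR $R$ provides. A secondary delicate point is to ensure that after cell decomposition the pieces of $\gamma_i^{-1}(X'_i)$ are genuine graphs over open subsets of $K^d$ and not merely multi-sheeted covers; this is forced by the tangent-space transversality, which compels the last $m-d$ cell-decomposition levels to be $0$-cells.
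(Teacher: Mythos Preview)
Your proof is correct and takes a genuinely different route from the paper's. The paper proceeds by an iterative coordinate swap: it first uses cell decomposition to write $X$ (up to a set of lower dimension and after a permutation of coordinates) as the graph of a definable analytic $f:U\subset K^d\to K^{m-d}$, then partitions further so that on each piece every $|\partial f_j/\partial x_i|$ is either $\leq\varepsilon$ or $>\varepsilon$; whenever some $|\partial f_j/\partial x_i|>\varepsilon$ it exchanges the roles of $x_i$ and the $j$-th output coordinate (a bi-analytic change of variables on the base, justified via Corollary~3.7 of \cite{CCL}, amounting to a $\GL_m(R)$-move on the ambient space) and repeats until all partials are small. Your argument instead fixes all the $\gamma_i$ at once from a finite $\GL_m(R)$-cover of the compact Grassmannian by translates of an $\varepsilon$-neighbourhood of the standard $d$-plane, partitions $X'$ according to this cover via the tangent map, and then lets cell decomposition produce the graphs. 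The paper's approach is more elementary in that it never leaves the cell-decomposition machinery, but its iteration requires some care; your approach is more geometric, produces the $\gamma_i$ non-iteratively, and makes explicit why only finitely many are needed.

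Two small remarks. First, the appeal to Lemma~\ref{ana-eps} is superfluous: once you know $|D\varphi_{i,j}|<\varepsilon$ and $\varphi_{i,j}$ is analytic, the function is $\varepsilon$-analytic by definition (Lemma~\ref{ana-eps} goes in the opposite direction, from locally Lipschitz to bounded derivative). Second, the phrase ``force the corresponding coordinate axis to lie in the tangent space'' is a slight overstatement. What the argument actually yields is this: if some $k>d$ has $i_k=1$, take the largest such $k$; then the tangent vector obtained by varying $x_k$ is $e_k+\sum_{l>k}(\partial c_l/\partial x_k)\,e_l$, a \emph{nonzero} vector lying entirely in $\{0\}^d\times K^{m-d}$, which already contradicts the tangent space being the graph of a linear map $K^d\to K^{m-d}$. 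This refined statement is exactly what your transversality hypothesis delivers.
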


\begin{proof}
Take a finite partition of $X$ into cells $X_i$. By neglecting
cells of dimension $<d$, we may suppose that $X=X_1$ and that
$X_1$ is a cell of type $(i_1,\ldots, i_m)$ with $\sum_{j=1}^m
i_j=d$.
 By reordering the coordinates, by partitioning again into
cells, and neglecting cells of dimension $<d$, we may suppose that
$X$ is a cell of type
 $$(i_1,\ldots,
i_m)=(1,\ldots,1,0,\ldots,0).
 $$
 Let $p:K^m\to K^d$ be the
projection to the first $d$ coordinates.

The set $X$ itself is already a graph of a map
$(f_1,\ldots,f_{m-d})=f:p(X)\to K^{m-d}$. We may suppose that the
$f_i$ are analytic. Write $(x_1,\ldots,x_d)$ for the coordinates on
$p(X)$. Consider the differentials $(\frac{\partial f_j}{\partial
x_i})_{i=1}^d$.
 By partitioning further, we may suppose that for each $i=1,\ldots, d$,
 either
$|\frac{\partial f_j}{\partial x_i}| \leq \varepsilon$
or
$|\frac{\partial f_j}{\partial x_i}| > \varepsilon$
hold on $p(X)$.
 If $|\frac{\partial f_j}{\partial x_i}|> \varepsilon$ on $p(X)$ for some
$i$ and $j$, then we may suppose that
$$
 (x_1,\ldots,x_d)\mapsto (x_1,\ldots,x_{i-1},\frac{f_j(x_1,\ldots,x_d)}{\varepsilon},x_{i+1},\ldots,x_{d})
 $$
is a bi-analytic change of variables
(cf.~Corollary 3.7 of \cite{CCL}).
By performing such change of
variables successively, we are done by the chain rule for differentiation.
\end{proof}

The following lemma is classical, see, for example, \cite{DHM} for
the semi-algebraic case and, for example, \cite{CLR1} for the
subanalytic case. Let $\cL^\ast$ be the language $\cL$ together with a
function symbol for the field inverse on $K^\times$ (extended by
zero on zero), function symbols for each $n$ which stands for a
(definable) $n$-th root picking function $\sqrt[n]{}$ on the $n$-th
powers (extended by zero outside the $n$-th powers), and for each
degree $n$ a Henselian root picking function $h_n$ for polynomials
of degree $n$ in the $n+1$ coefficients (extended by zero if the
conditions of Hensel's Lemma are not fulfilled).
\begin{lem}[\cite{DHM}, \cite{CLR1}]\label{terms}
Let $f:D\subset K^n\to K^m$ be an $\cL$-definable function. Then $D$
can be partitioned into finitely many definable pieces $D_i$ such
that there are $\cL^\ast$-terms $t_i$ with $f(x)=t_i(x)$ for each $i$
and each $x\in D_i$.
\end{lem}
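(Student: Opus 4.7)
The plan is to reduce first to the case $m=1$ by treating each component $f_j$ of $f$ separately and taking the common refinement of the resulting partitions. With $m=1$ fixed, I would apply the $p$-adic Cell Decomposition Theorem (Theorem \ref{thm:CellDecomp}) to the graph $\Gamma(f) \subseteq K^{n+1}$. Because $\Gamma(f)$ is the graph of a function, every cell $A_i$ in the decomposition must project injectively onto its base $D_i$; but a $1$-cell of $K^{n+1}$ contains an open $p$-adic disc of $t$-values over each base point, so each $A_i$ is forced to be a $0$-cell. By definition, a $0$-cell has the form $\{(x,c_i(x)) : x \in D_i\}$ for an $\cL$-definable analytic function $c_i$, its center. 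Therefore $f|_{D_i} = c_i$, and the lemma is reduced to showing that every such center can be written as an $\cL^\ast$-term.

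To achieve this reduction, I would induct on $n$ and inspect the construction of centers in the standard proofs of $p$-adic cell decomposition. In the semi-algebraic setting (Cohen \cite{cohen}, Denef \cite{D84, Dcell}), centers are produced by iterated application of field operations, including the field inverse (a symbol of $\cL^\ast$), and of $n$-th root extractions on elements of $P_n$ (for which $\cL^\ast$ provides the symbols $\sqrt[n]{\ }$). In the subanalytic setting \cite{Ccell}, one additionally needs to solve polynomial equations with restricted analytic coefficients under the hypotheses of Hensel's Lemma; this is precisely what the Henselian root picking functions $h_n$ of $\cL^\ast$ encode. Applying the induction hypothesis to the coefficient functions in fewer variables appearing in these procedures, and then composing with the above $\cL^\ast$-operations, yields a term representation of $c_i$.

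The main difficulty is the subanalytic case: one must carefully track how the restricted power series defining $\Gamma(f)$ get broken down via successive Weierstrass preparations, so that at each intermediate stage the coefficients and roots being manipulated are themselves already known to be $\cL^\ast$-terms by the inductive hypothesis. This bookkeeping is exactly the content of the term-structure results in \cite{CLR1} and \cite{DHM} cited in the statement. With that tool in hand, the combinatorial cell decomposition and the induction then piece together to produce the claimed partition of $D$ and the terms $t_i$.
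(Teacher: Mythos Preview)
The paper gives no proof of this lemma; it is stated with a citation to \cite{DHM} (semi-algebraic case) and \cite{CLR1} (subanalytic case) and treated as a known result. Your outline is a fair high-level description of how those references proceed.

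One point worth sharpening: the two-step presentation ``apply Theorem~\ref{thm:CellDecomp} as a black box, then argue the centers are $\cL^\ast$-terms'' is slightly misleading, because the centers handed to you by the cell decomposition theorem are a priori only $\cL$-definable functions $c_i:K^n\to K$, and showing that an $\cL$-definable function on $K^n$ is piecewise a term is exactly the $n$-variable case of the lemma you are proving. So there is no genuine reduction at that stage. What actually happens in \cite{DHM} and \cite{CLR1} (and what your last two paragraphs correctly gesture at) is that one proves a \emph{strengthened} cell decomposition, in which centers are $\cL^\ast$-terms from the outset, simultaneously with the term-structure lemma, by induction on the number of variables and on the complexity of the defining formulas. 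At each step the new center is obtained from previously constructed term-functions in fewer variables by a single application of $^{-1}$, $\sqrt[n]{\ }$, or a Henselian root $h_n$, which is why those symbols are precisely the ones added to form $\cL^\ast$. Your sketch is therefore correct in spirit; just be aware that the graph-of-$f$ argument is a reformulation rather than a reduction, and the substance lies entirely in reproving cell decomposition with term-centers.
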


\section{Local densities}

\subsection{A false start}\label{fstart} Let $X$ be a definable subset
of $K^m$ of dimension $d$ and let
$x$ be a point of $K^m$. Considering what is already known in the
complex analytic and real o-minimal case,
a natural way to define
the local density of $X$ at $x = (x_1, \dots, x_m)$ would be
to consider the limit
of $q^{nd} \mu_d (X \cap B (x, n))$, as $n \to \infty$.
Unfortunately this na\"\i ve attempt fails as
is
shown by the following
example that
we  present
in detail in order to caution the reader not to rely too heavily on
intuition coming from the real setting.
Take  $X$ the subset of points of even valuation in $K$ and $x = 0$.
Write $\pi_K$ for a uniformizer of $R$.
The unit ball $B$ in $K$ being of measure $1$, the ball
$\pi_K^\ell\cdot B = B(0,\ell)$
of radius $q^{-\ell}$ has volume $q^{-\ell}$ and,
by consequence, the
sphere
$\pi_K^\ell\cdot S=\pi_K^\ell\cdot B\setminus \pi_K^{\ell+1}\cdot B$
of radius $q^{-\ell}$ has volume
$q^{-\ell}(1-q^{-1}) $. For $k\in \NN$, let us first
compute the volume of
$X\cap B(0,2k)$. The set
 $X\cap B(0,2k)$ is the disjoint union of the spheres
 $\pi_K^{2j}\cdot S$ for $j\ge k$ and thus has as volume
 $$ \mu_1(X\cap B(0,2k))=(1-q^{-1})(q^{-2k}+q^{-2k-2}+\cdots)
=\frac{q^{-2k}}{1+q^{-1}}.$$
On the other hand, the set
 $X\cap B(0,2k-1)$ is also the disjoint union of the spheres
 $q^{2j}\cdot S$ for $j\ge k$ and thus has as volume
 $$ \mu_1(X\cap B(0,2k-1))=\frac{q^{-2k}}{1+q^{-1}}.$$
We finally see that in this example
the value of the limit
$\displaystyle \lim_{\ell\to \infty}
 \frac{\mu_1(X\cap B(0,\ell))}{\mu_1(B(0,\ell))} $
depends on the parity of $\ell$, since
 $$\lim_{k \to \infty} q^{2k} \mu_1 (X \cap B (0, 2k)) =
(1 + q^{- 1})^{-1}$$
and
$$\lim_{k \to \infty} q^{2k - 1} \mu_1 (X \cap B (0, 2k - 1)) =
(1 + q)^{-1}.$$
In our example one notices that the convergence of
the ratio $\displaystyle
 \frac{\mu_1(X\cap B(0,\ell))}{\mu_1(B(0,\ell))} $  is
 $2$-periodic and that  one may recover the expected local density,
which should be $\displaystyle \frac{1}{2}$,
by taking the average of the two limits.
To obviate the kind of difficulty presented by this example
(the periodic convergence), we
are led to introduce a regularization device that we shall explain now.

\subsection{Mean value at infinity of bounded constructible functions}

We will use the following elementary definition of the mean value at infinity of certain real valued functions on $\NN$.
\begin{definition}\label{def:MV}
Say that a function $h:\NN\to \RR$ has a mean value at infinity if there exists an integer $e>0$ such
that
$$
\lim_{\sur{n\to\infty}{n\equiv c\bmod e}} h(n)
$$
exists in $\RR$ for each $c=0,\ldots,e-1$ and in this case define the mean value at infinity of $h$ as the average
$$
\MV (h) := \frac{1}{e} \sum_{c=0}^{e-1}\lim_{\sur{n\to\infty}{n\equiv c\bmod e}} h(n).
$$
Clearly the value $\MV (h)$ is independent of the choice of the modulus $e>0$.
\end{definition}

Let $X$ be a definable subset of $K^m$, so that
$X \times \NN$
is a definable subset of
$K^m \times \ZZ$.
Say that a real valued function $g$ on $X \times
\NN$ is $X$-bounded if for every $x$ in $X$ the restriction of
$g$ to $\{x\} \times \NN$ is bounded (in the sense that $g(\{x\} \times \NN)$ is contained in a compact subset of $\RR$).
 As has been indicated in the introduction and in the example of section \ref{fstart}, for an $X$-bounded function $\varphi$ in $\cC (X \times
\NN)$ and $x\in X$, the function $\varphi_x:\NN\to\QQ:n\mapsto \varphi(x,n)$ may not have a unique limit for $n\to\infty$, but it may have a mean value at infinity $\MV(\varphi_x)$, as we will indeed show in Proposition \ref{lem:pos}. We will moreover show in Proposition \ref{lem:pos} that $\MV(\varphi_x)$, considered as a function in $x\in X$, lies in $\cC(X)$ and that $\MV(\varphi_x)$ can be calculated using a single integer $e$ as modulus when  $x$ varies in $X$.

\begin{lem}\label{lem:MV}
Let $\varphi$ be in in $\cC (X \times \NN)$. Suppose that, for each $x\in X$, the function $\varphi_x:\NN\to\QQ:n\mapsto \varphi(x,n)$ has finite image. Then  $\varphi_x$ has a mean value at infinity $\MV (\varphi_x)$ for each $x$. Moreover, there exist a definable function
$b:X\to \NN$ and an integer $e>0$ such that for all $c$ with
$0\leq c< e$ and all $x\in X$, the rational number $d_c(x):=\varphi(x,n)$ is independent of $n$ as long as
$n\geq b(x)$ and $n\equiv c \bmod e$. Thus, for each $x\in X$, one has
$$
\MV (\varphi_x) = \frac{1}{e} \sum_{c=0}^{e-1}d_c(x).
$$
By consequence, the function $\MV (\varphi_x)$, considered as a function in $x\in X$, 
lies in $\cC(X)$.
\end{lem}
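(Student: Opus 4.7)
My plan is to reduce $\varphi$ to a normal form via a cell decomposition in the integer variable $n$ (parametrized by $x$), and then exploit the finite-image hypothesis to extract the eventual constant values on each congruence class modulo some integer $e$. First, I would partition $X\times\NN$ into finitely many definable cells $A_i$ of the form
$$A_i=\{(x,n)\in X_i\times\NN : \alpha_i(x)\le n\le \beta_i(x),\ n\equiv c_i\bmod e_i\},$$
where the $X_i$ form a finite definable partition of $X$, the functions $\alpha_i,\beta_i$ take values in $\ZZ\cup\{+\infty\}$ and are definable, and $c_i,e_i$ are integers, such that on each $A_i$ the function $\varphi$ takes the normal form
$$\varphi(x,n)=\sum_j h_{ij}(x)\,n^{s_{ij}}\,q^{-a_{ij}n}$$
with $h_{ij}\in\cC(X_i)$ and integers $s_{ij}\ge 0$, $a_{ij}\in\ZZ$. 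This is the Presburger-variable analogue of Lemma \ref{prop:descrip:simple}, obtained by applying parametric cell decomposition in $n$ to the generators $\alpha$ and $q^{-\alpha}$ of $\cC(X\times\NN)$.

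Let $e$ be the least common multiple of the moduli $e_i$ over the cells with $\beta_i=+\infty$. For a fixed $x\in X$ and a fixed residue $c$ modulo $e$, the value $\varphi(x,n)$ for large $n$ with $n\equiv c\bmod e$ is computed on the unique unbounded cell $A_i$ satisfying $x\in X_i$ and $c\equiv c_i\bmod e_i$. I then read off the large-$n$ asymptotics of the normal form on that cell and apply the finite-image hypothesis: any term with $a_{ij}<0$ and $h_{ij}(x)\ne 0$ grows without bound, forcing $h_{ij}(x)=0$ whenever $a_{ij}<0$; any term with $a_{ij}=0$ and $s_{ij}>0$ gives an unbounded polynomial contribution, forcing $h_{ij}(x)=0$ for those indices too; finally, the remaining $a_{ij}>0$ terms form a sum converging to zero, and since a sequence with finite image and limit zero must eventually equal zero, the linear independence of the family $\{n\mapsto n^s q^{-an}\}_{(s,a)}$ on any infinite arithmetic progression (proved by grouping by $a$ and extracting the dominant polynomial) forces $h_{ij}(x)=0$ for $a_{ij}>0$ as well. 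Since this conclusion is drawn at every $x\in X_i$, the functions $h_{ij}$ with $(s_{ij},a_{ij})\ne(0,0)$ vanish identically on $X_i$.

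Consequently, on each unbounded cell $A_i$ the restriction of $\varphi$ reduces to a single summand $h_{ij_0}(x)\in\cC(X_i)$ with $(s_{ij_0},a_{ij_0})=(0,0)$, independent of $n$. Setting $b(x):=\max_i\alpha_i(x)$ over the unbounded cells $A_i$ with $x\in X_i$, which is a definable function $X\to\NN$, I conclude that $\varphi(x,n)=d_c(x)$ whenever $n\ge b(x)$ and $n\equiv c\bmod e$, where $d_c(x)$ is piecewise defined by $h_{ij_0}$ on the $X_i$. Hence $d_c\in\cC(X)$, and $\MV(\varphi_x)=\frac{1}{e}\sum_{c=0}^{e-1}d_c(x)$ is a $\QQ$-linear combination of elements of $\cC(X)$, hence itself in $\cC(X)$.

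The main obstacle is Step 1, namely producing the normal form uniformly in $x$: this requires the Presburger-variable analogue of Lemma \ref{prop:descrip:simple}, obtained via parametric cell decomposition for $\cC$-functions involving an integer variable. Once this normal form is in place, the remaining asymptotic analysis is routine, the only subtle point being the linear independence of $\{n\mapsto n^s q^{-an}\}$ on infinite arithmetic progressions for distinct $(s,a)$-pairs.
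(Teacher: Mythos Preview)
Your approach is essentially the same as the paper's: both obtain a normal form $\varphi(x,n)=\sum_i h_{ic}(x)\,n^{\ell_i}q^{a_i n}$ on each residue class $n\equiv c\bmod e$ for $n\geq b(x)$ via Lemma~\ref{prop:descrip:simple} (in its Presburger-variable form, together with quantifier elimination in the three-sorted language $\cL'$), and then use the finite-image hypothesis to kill all terms with $(\ell_i,a_i)\neq(0,0)$. Your write-up is in fact more careful than the paper's on the last step, spelling out the linear-independence argument for the family $\{n\mapsto n^s q^{-an}\}$ on an infinite arithmetic progression, which the paper compresses into the phrase ``one must find $\varphi(x,n)=h_{jc}(x)$''.
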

\begin{proof}
The lemma is a direct consequence of Lemma \ref{prop:descrip:simple} and quantifier elimination in the three sorted language $\cL'$ of section \ref{sr}. Indeed, for $\varphi\in\cC(X\times \NN)$ there exist, by Lemma \ref{prop:descrip:simple} and quantifier elimination in $\cL'$, a definable function
$b:X\to \NN$ and an integer $e>0$ such that for all $c$ with
$0\leq c< e$ one has
\begin{equation}\label{varphixn}
\varphi(x,n) = \sum_{i=1}^k n^{\ell_i} q^{a_{i}n} h_{ic}(x)
\end{equation}
for all $x\in X$ and all $n$ with $n\geq b(x)$ and $n\equiv c \bmod e$, and where the $h_{ic}$ are in $\cC(X)$. Clearly, by regrouping, we may suppose that the pairs $(\ell_i,a_i)$ are mutually different. But then, since $\varphi_x$ has finite image for each $x\in X$, one must find
$$
\varphi(x,n) = h_{jc}(x)
$$
for all $x\in X$ and all $n$ with $n\geq b(x)$ and $n\equiv c \bmod e$, where $j$ is such that  $(\ell_j,a_j)=(0,0)$. Hence, one has $h_{jc}=d_c$ and we are done.
\end{proof}

\begin{prop}\label{lem:pos}
Let $\varphi$ be in $\cC (X \times \NN)$. Suppose that $\varphi$ is $X$-bounded.
Then there exist $\varphi'$ in $\cC (X \times \NN)$ with $\lim
_{n\to\infty}\varphi'(x,n)=0$ for all $x\in X$ and such that the function
$$
g_x:  \NN\to\QQ:n\mapsto \varphi(x,n) -  \varphi'(x,n)
$$
has finite image. Clearly, the function $g:X\times \NN:(x,n)\mapsto g_x(n)$ lies in $\cC(X\times \NN)$.
Hence, $\MV(g_x)$ and $\MV(\varphi_x)$ exist and are equal and the function $\MV (\varphi_x)$, considered as a function in $x\in X$,
lies in $\cC(X)$.
Also, if $\varphi\geq 0$ then $\MV (\varphi_x) \geq
0$ for all $x\in X$.
\end{prop}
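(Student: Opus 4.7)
The plan is to refine the proof of Lemma \ref{lem:MV}, using the $X$-boundedness hypothesis to eliminate the asymptotically growing terms in the normal form and thereby reduce to the finite-image situation handled by that lemma. First, I would apply Lemma \ref{prop:descrip:simple} together with quantifier elimination in $\cL'$, exactly as at the start of the proof of Lemma \ref{lem:MV}, to obtain a definable function $b : X \to \NN$, an integer $e > 0$, pairwise distinct pairs $(\ell_i, a_i) \in \NN \times \ZZ$ for $1 \leq i \leq k$, and functions $h_{ic} \in \cC(X)$ (for $0 \leq c < e$), such that
\begin{equation*}
\varphi(x, n) = \sum_{i=1}^{k} n^{\ell_i} q^{a_{i} n} h_{ic}(x)
\end{equation*}
for every $x \in X$ and every $n \geq b(x)$ with $n \equiv c \bmod e$.

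Next, I would leverage the $X$-boundedness of $\varphi$ to show that $h_{ic} \equiv 0$ on $X$ whenever the pair $(a_i, \ell_i)$ is lexicographically greater than $(0, 0)$, that is, whenever $a_i > 0$, or $a_i = 0$ and $\ell_i > 0$. Indeed, were there some $x \in X$, some $c$, and some such $i$ with $h_{ic}(x) \neq 0$, then selecting the lexicographically maximal index $i_0$ among the $j$ with $h_{jc}(x) \neq 0$, the pairwise distinctness of the $(\ell_j, a_j)$ would make $n^{\ell_{i_0}} q^{a_{i_0} n}$ strictly dominate every other term as $n \to \infty$ along $n \equiv c \bmod e$, forcing $|\varphi(x, n)| \to \infty$ and contradicting the boundedness of $\varphi(x, \cdot)$. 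Consequently every surviving index satisfies either $a_i < 0$ or $(\ell_i, a_i) = (0, 0)$.

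I would then set
\begin{equation*}
\varphi'(x, n) := \sum_{i \,:\, a_i < 0} n^{\ell_i} q^{a_i n} h_{i, \, n \bmod e}(x),
\end{equation*}
which is manifestly in $\cC(X \times \NN)$ and tends to $0$ in $n$ for each fixed $x$, since $a_i < 0$ makes $q^{a_i n}$ decay faster than any power of $n$ grows. Setting $g := \varphi - \varphi'$, for $n \geq b(x)$ the value $g(x, n)$ equals $h_{i_0, \, n \bmod e}(x)$, where $i_0$ denotes the unique index (if any) with $(\ell_{i_0}, a_{i_0}) = (0, 0)$, and is $0$ otherwise; in particular $g_x$ has finite image. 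Lemma \ref{lem:MV} applied to $g$ then yields $\MV(g_x) \in \cC(X)$, and since $\varphi'_x \to 0$ gives $\MV(\varphi'_x) = 0$, we conclude that $\MV(\varphi_x) = \MV(g_x)$ exists and lies in $\cC(X)$. Nonnegativity when $\varphi \geq 0$ is immediate, as each partial limit is a limit of nonnegative reals.

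The main obstacle I anticipate is the second step above: one must combine the pairwise distinctness of the pairs $(\ell_i, a_i)$, which precludes any asymptotic cancellation, with $X$-boundedness to force the coefficients of all asymptotically growing terms to vanish identically on $X$. Once this is established, the remainder—assembling $\varphi'$ and $g$, and invoking Lemma \ref{lem:MV}—proceeds routinely.
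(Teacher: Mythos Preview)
Your proposal is correct and follows essentially the same approach as the paper: obtain the normal form from Lemma~\ref{prop:descrip:simple}, use $X$-boundedness to eliminate the terms with $(a_i,\ell_i)$ lexicographically above $(0,0)$, collect the $a_i<0$ terms into $\varphi'$, and invoke Lemma~\ref{lem:MV} on the remainder. The only cosmetic differences are that the paper sets $\varphi'=\varphi$ for $n<b(x)$ (so that $g$ vanishes there), whereas you let the formula define $\varphi'$ globally; and your dominance argument for killing the growing terms is spelled out more carefully than the paper's one-line assertion---both versions are fine.
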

\begin{proof}
Write again $\varphi$ as in (\ref{varphixn}) for some integer $e$, where again the pairs $(\ell_i,a_i)$ are mutually different. Define $\varphi'(x,n)$ as the partial sum
$$
\sum_{i\in I} n^{\ell_i} q^{a_{i}n} h_{ic}(x)
$$
for $x\in X$ and $n$ satisfying $n\geq b(x)$ and $n\equiv c \bmod e$, where $c=0,\ldots,e-1$, where $I$ consists of those $i$ with $a_i<0$. Extend $\varphi'$ to the whole of $X\times \NN$ by putting it equal to $\varphi$ for those $n$ with $n<b(x)$. Since $\varphi$ is $X$-bounded, one must have that $a_i\leq 0$ for all $i$, and, for those $i$ with $a_i=0$ one must have $\ell_i=0$. But then, we find
$$
g(x,n) = h_{jc}(x)
$$
for all $x\in X$ and all $n$ with $n\geq b(x)$ and $n\equiv c \bmod e$, where $j$ is such that  $(\ell_j,a_j)=(0,0)$. For $n$ with $n<b(x)$ one clearly has $g(x,n)=0$. The conclusions now follow from Lemma \ref{lem:pos}.
\end{proof}

\subsection{Local densities}
As already sketched in the introduction, we will define the local density of an $\cL$-definable set $X\subset K^m$ at a point $x$ as the mean value at infinity of the renormalized measure of the intersection of $X$ with the sphere of radius $q^{-n}$ around $x$. At our disposal to show that this is well-defined we have  Proposition \ref{lem:pos} and Lemma \ref{2.3.1} below which guarantee the existence of the mean value at infinity. More generally, for a bounded function $\varphi$ in $\cC(X)$, we extend $\varphi$ to $K^m$ by zero outside $X$ and we will define the density of $\varphi$ at any point $x\in K^m$ by a similar procedure, replacing the measure by an integral of $\varphi$ on a small sphere around $x$.

\subsection*{}
Let $\varphi$ be a bounded function in $\cC (X)$, meaning that
the image of $\varphi$ is contained in a compact subset of $\RR$.
For $(x, n ) $ in
$K^m \times \NN$ we set
\begin{equation}\label{gn}
\gamma (\varphi) (x, n) :=
\int_{S(x,n)\cap X} \varphi (y) \mu_d,
\end{equation}
where $S(x,n)$ is the sphere $\{y\in K^m\mid |x-y|=q^{-n}\}$ of radius $q^{-n}$ around $x$. Note that, by Lemma \ref{sph=ball} below, one could as well work with balls around $x$ instead of spheres consequently in this section.
 By \cite{D85} for the semi-algebraic case and \cite{Ccell} for the
subanalytic case, the function $\gamma (\varphi) : (x, n) \mapsto
\gamma (\varphi) (x, n) $ lies in $\cC (K^m \times \NN)$.

Suppose that $X$ is of dimension $d$.
Then we renormalize $\gamma (\varphi)$ by dividing it by the volume of the $d$-dimensional sphere of corresponding radius and define the resulting function
$\theta_d (\varphi)$ by
\begin{equation}\label{dp}
\theta_d (\varphi) (x, n) :=
 \frac{\gamma (\varphi) (x, n)}{\mu_d (S_{d}(n))},
\end{equation}
where $S_{d}(n)$ is the $d$-dimensional sphere of radius $q^{-n}$, namely the set $\{w\in K^d\mid |w| = q^{-n}\}$. Note that $S_{d}(n)$ has measure equal to  $(1-q^{-d}) q^{-nd}$ and thus, $\theta_d (\varphi)$ lies in $\cC (K^m \times \NN)$.

The following lemma yields sufficient conditions for the mean value at infinity of $\theta_d (\varphi)$ to exist, in view of Proposition \ref{lem:pos}.
\begin{lem}\label{2.3.1} Let $\varphi$ be a bounded function in $\cC (X)$. Assume $X$ is of dimension $d$. Then
the function
$\theta_d (\varphi)$ lies in $\cC (K^m \times \NN)$ and
is $K^m$-bounded.
\end{lem}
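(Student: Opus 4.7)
The plan is to handle the two assertions separately, dealing first with the membership in $\cC(K^m\times\NN)$ and then with $K^m$-boundedness. For the first assertion, by the cited results of Denef \cite{D85} (in the semi-algebraic case) and \cite{Ccell} (in the subanalytic case), the integral $\gamma(\varphi)$ already lies in $\cC(K^m\times\NN)$. The normalization factor $\mu_d(S_d(n))^{-1}=(1-q^{-d})^{-1}q^{nd}$ is of the form $q^{\alpha(x,n)}$ with $\alpha(x,n)=nd$ definable in $\cL'$ times a rational constant, hence belongs to $\cC(K^m\times\NN)$. Multiplying gives $\theta_d(\varphi)\in\cC(K^m\times\NN)$.

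For the boundedness, fix $x\in K^m$. Since $|\varphi|\leq M$ for some constant $M$, it suffices to bound $\mu_d(X\cap S(x,n))/\mu_d(S_d(n))$ uniformly in $n$ (and in $x$). I would apply Proposition \ref{1.4} with $\varepsilon=1$ to decompose $X$, up to a set $Y$ of dimension $<d$ (which is $\mu_d$-negligible), as a finite disjoint union of sets $\gamma_i(\Gamma_i)$, where $\gamma_i\in\GL_m(R)$ and $\Gamma_i=\Gamma(\varphi_i)$ is the graph of a $1$-analytic definable map $\varphi_i:U_i\to K^{m-d}$ with $U_i$ open in $K^d$. Since each $\gamma_i$, viewed as a linear automorphism of $K^m$, is an isometry for the sup norm and has determinant a unit in $R$, it preserves both the family of spheres $S(\cdot,n)$ and the $d$-dimensional measure $\mu_d$. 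Thus the problem reduces to bounding $\mu_d(\Gamma(f)\cap S(x_0,n))$ uniformly, for $f:U\subset K^d\to K^{m-d}$ a definable $1$-analytic map and $x_0\in K^m$ arbitrary.

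The key observation is that when $f$ is $1$-analytic, the density $\sup_J|g_J|$ defining $|\omega_0|_{\Gamma(f)}$ (cf.\ section \ref{pmes}) equals $1$ identically: indeed $g_I=1$ for $I=\{1,\dots,d\}$, while every other $g_J$ is a $d\times d$ minor built from partial derivatives of $f$, each of norm $\leq 1$, so $|g_J|\leq 1$ by the ultrametric bound on determinants. Consequently the projection $p:\Gamma(f)\to U$ onto the first $d$ coordinates is an isometry which transports $\mu_d$ on $\Gamma(f)$ to the Haar measure on $U$. Writing $x_0=(x_0',y_0')$, the condition $(u,f(u))\in S(x_0,n)$ forces $|u-x_0'|\leq q^{-n}$, i.e.\ $u\in B(x_0',n)$, so
\[
\mu_d\bigl(\Gamma(f)\cap S(x_0,n)\bigr)\;\leq\;\mu_d\bigl(B(x_0',n)\bigr)\;=\;q^{-nd}.
\]
Summing over the $N(1)$ pieces yields $\mu_d(X\cap S(x,n))\leq N(1)\,q^{-nd}$, so $|\theta_d(\varphi)(x,n)|\leq M\,N(1)/(1-q^{-d})$ uniformly in $n$ (and even in $x$), proving the required $K^m$-boundedness.

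The main subtlety I expect is the density computation showing $\sup_J|g_J|=1$ under the $1$-Lipschitz hypothesis — everything else reduces to bookkeeping once this identification of the graph measure with Haar measure on the base is in hand.
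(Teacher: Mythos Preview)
Your proof is correct. The first paragraph matches the paper exactly. For boundedness, both you and the paper reduce to a piece of $X$ on which the coordinate projection to $K^d$ is measure-preserving for $\mu_d$, and then observe that the image of $X\cap S(x,n)$ lands in the $d$-dimensional ball $B(x',n)$, giving a bound independent of $n$.

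The difference lies in the reduction step. The paper invokes cell decomposition together with the jacobian property (Proposition~\ref{jacprop}, a forward reference to Section~\ref{swf}) to arrange that each cell ``projects isometrically'' onto the first $d$ coordinates. You instead use Proposition~\ref{1.4} with $\varepsilon=1$, obtain pieces that are $\GL_m(R)$-images of graphs of $1$-analytic maps, and then make the explicit observation that for such a graph the density $\sup_J|g_J|$ equals $1$ (since $g_{\{1,\dots,d\}}=1$ and every other minor has entries of norm $\le 1$). Your route has the advantage of relying only on results already established before the lemma and of making the measure identification fully explicit; the paper's route is terser and yields a slightly sharper constant per piece but leans on a result proved later. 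One small remark: the justification that $\gamma_i\in\GL_m(R)$ preserves $\mu_d$ is not really that $\det\gamma_i$ is a unit (that governs $\mu_m$), but that every minor of $\gamma_i$ lies in $R$, so $\sup_J|dx_J|$ is preserved under pullback---exactly the same computation you carry out for the graph.
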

\begin{proof} That $\theta_d (\varphi)$ lies in $\cC (K^m \times \NN)$ is shown above, so we just have to show that $\theta_d (\varphi)$ is $K^m$-bounded. By the additivity of integrals and by cell decomposition, we may suppose that $X$ is a cell of dimension $d$. By changing the order of the coordinates if necessary and by Proposition \ref{jacprop}, we may suppose that $X$ projects isometrically to the first $d$ coordinates of $K^m$.
 If now $M>0$ is such that $\varphi(y)$ lies in the real interval $[-M,M]$ for
all $y\in X$, then it is clear by construction that $\theta_d (\varphi)(x)$ also lies in $[-M,M]$ for
all $x\in K^m$.
\end{proof}

It follows from  Lemma \ref{2.3.1} and Proposition \ref{lem:pos} that if $\varphi$ is a bounded function
in $\cC (X)$ one can set
\begin{equation}\label{Dd}
\Theta_d (\varphi) := \MV \theta_d (\varphi),
\end{equation}
that is, for $x\in K^m$,  $\Theta_d (\varphi)(x)$ is the mean value at infinity of the function $n\mapsto \theta_d (\varphi)(x,n)$.
By Proposition \ref{lem:pos}, the function $\Theta_d (\varphi)$ lies in $\cC (K^m)$.
For $x$ in $K^m$, we call
$\Theta_d (\varphi) (x)$
the local density of $\varphi$ at $x$.
More generally, if $\varphi$ is bounded on a neighborhood of some $x\in K^m$, then
$\Theta_d (\varphi) (x)$ can be defined by first extending $\varphi$ by zero outside of this neighborhood and calculate its local density by the above definitions which is clearly independent of the choice of the neighborhood.
One should also note that $\Theta_d (\varphi) (x)$ is zero when $x$ does not belong to the closure of $X$.

\begin{definition}\label{pTheta} Let  $X$ be a definable subset of $K^m$ of dimension $d$ and let $x$ be a point in $K^m$.
We call the rational number
$$\Theta_d (X) (x) :=
\Theta_d (\11_X) (x)$$
the local density of $X$ at $x$,
where $\11_X$ is the characteristic function of $X$ which clearly lies in $\cC(K^m)$.
\end{definition}

Note that Definition \ref{pTheta} resembles the definition of the complex and real density as given in the introduction, where instead of the limit $\lim_{r\to 0}$ one takes $\MV$.

\begin{lem}\label{sph=ball}
Renormalizing with balls instead of with spheres yields the same local density functions $\Theta_d$. Precisely, for $\cL$-definable $X$ of dimension $d$ and
for $\varphi$ a bounded function
in $\cC (X)$ one has for $x\in K^m$
 $$
 \Theta_d (\varphi) = \MV (  \theta'_d (\varphi)  ),
 $$
 where
 $$
 \theta'_d (\varphi) (x, n) :=
 \frac{\gamma' (\varphi) (x, n)}{\mu_d (B_{d}(n))},
 $$
 $$
 \gamma' (\varphi) (x, n) :=
\int_{B(x,n)\cap X} \varphi (y) \mu_d,
 $$
and where $B_{d}(n)$ is the $d$-dimensional ball of radius $q^{-n}$, namely $\{w\in K^d\mid |w|\leq q^{-n}\}$, and $B(x,n)$ is the ball $\{y\in K^m\mid |x-y|\leq q^{-n}\}$ around $x$ as defined in \ref{defpadic}. In particular, $\theta'_d (\varphi)$ lies in $\cC(K^m\times \NN)$ and is $K^m$-bounded and thus its $\MV$ is well-defined.
\end{lem}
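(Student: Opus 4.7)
The plan is to reduce $\theta'_d$ to $\theta_d$ by exploiting that in the $p$-adic setting a ball decomposes as a countable disjoint union of spheres, and then to show that the averaging involved in $\MV$ is invariant under the resulting geometric rearrangement. First I would observe that $B(x,n) = \bigsqcup_{k \geq n} S(x,k)$, whence $\gamma'(\varphi)(x,n) = \sum_{k \geq n} \gamma(\varphi)(x,k)$. Substituting $\mu_d(B_d(n)) = q^{-nd}$ and $\mu_d(S_d(n)) = (1-q^{-d})q^{-nd}$ and reindexing $j = k - n$ yields the key identity
\begin{equation*}
\theta'_d(\varphi)(x,n) \;=\; (1-q^{-d}) \sum_{j=0}^{\infty} q^{-jd}\, \theta_d(\varphi)(x,n+j).
\end{equation*}
That $\theta'_d(\varphi)$ lies in $\cC(K^m \times \NN)$ follows, as for $\gamma$, from \cite{D85} and \cite{Ccell} applied to balls instead of spheres, and $K^m$-boundedness is proved verbatim as in Lemma \ref{2.3.1}.

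Next I would invoke the quantifier-elimination description of constructible functions used in the proofs of Lemma \ref{lem:MV} and Proposition \ref{lem:pos}: since $\theta_d(\varphi) \in \cC(K^m \times \NN)$ is $K^m$-bounded, there exist a definable $b: K^m \to \NN$, an integer $e > 0$, and functions $d_0, \ldots, d_{e-1}$ in $\cC(K^m)$ such that
\begin{equation*}
\theta_d(\varphi)(x,n) \;=\; d_{c}(x) + r(x,n) \quad\text{whenever } n \equiv c \bmod e \text{ and } n \geq b(x),
\end{equation*}
where $r(x,n)$ is a finite sum of terms of the form $n^{\ell_i} q^{a_i n} h_i(x)$ with $a_i < 0$ (boundedness rules out nonnegative exponents in the nonconstant part). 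Thus pointwise in $x$ one has $|r(x,n)| = O(q^{-\alpha n})$ for some $\alpha > 0$, so $\sum_{j \geq 0} q^{-jd}\, r(x, n+j) \to 0$ as $n \to \infty$.

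Substituting into the key identity and summing the geometric series by grouping $j = ke + r'$ with $0 \leq r' < e$, for $n \equiv c \bmod e$ and $n$ large one obtains
\begin{equation*}
\lim_{\sur{n\to\infty}{n \equiv c \bmod e}} \theta'_d(\varphi)(x,n) \;=\; \frac{1-q^{-d}}{1-q^{-ed}} \sum_{r'=0}^{e-1} q^{-r'd}\, d_{(c+r') \bmod e}(x).
\end{equation*}
Averaging this over $c = 0, \ldots, e-1$, the cyclic shift permutes the $d_c$'s so $\sum_{c=0}^{e-1} d_{(c+r') \bmod e}(x) = \sum_{c=0}^{e-1} d_c(x)$ for each $r'$, while $\sum_{r'=0}^{e-1} q^{-r'd} = (1-q^{-ed})/(1-q^{-d})$ cancels the prefactor exactly, leaving
\begin{equation*}
\MV(\theta'_d(\varphi)(x, \cdot)) \;=\; \frac{1}{e} \sum_{c=0}^{e-1} d_c(x) \;=\; \MV(\theta_d(\varphi)(x,\cdot)) \;=\; \Theta_d(\varphi)(x),
\end{equation*}
as claimed. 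The only delicate step is the interchange of sum and limit, justified by the exponential decay of $r$; everything else is a telescoping cancellation of geometric factors, which is exactly what forces the two renormalizations to agree after averaging.
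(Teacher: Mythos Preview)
Your argument is correct, but it takes a different and somewhat longer route than the paper. You express the ball as a disjoint union of spheres, which turns $\theta'_d$ into an infinite geometric series in the values of $\theta_d$, and then you feed in the explicit structure of $\theta_d$ coming from Proposition~\ref{lem:pos} (periodic limits $d_c$ plus an exponentially decaying remainder) to compute the periodic limits of $\theta'_d$ and check that their average matches. The paper instead goes in the opposite direction: it uses the finite relation $S(x,n)=B(x,n)\setminus B(x,n+1)$, which gives
\[
\theta_d(\varphi)(x,n)=\frac{1}{1-q^{-d}}\bigl(\theta'_d(\varphi)(x,n)-q^{-d}\,\theta'_d(\varphi)(x,n+1)\bigr),
\]
and then invokes the elementary fact that for any $b\neq 1$, if $f:\NN\to\RR$ has a mean value at infinity then so does $g(n)=\tfrac{1}{1-b}(f(n)-bf(n+1))$, with $\MV(g)=\MV(f)$. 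This avoids infinite series entirely and does not use the internal structure of bounded constructible functions beyond the fact (already established) that $\theta'_d$ has an $\MV$. Your approach has the virtue of making the periodic limits of $\theta'_d$ explicit, but the paper's finite-difference trick is shorter and isolates the purely combinatorial reason the two normalizations agree.
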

\begin{proof}
That $\theta'_d (\varphi)$ lies in $\cC(K^m\times \NN)$ and is $K^m$-bounded is proven as Lemma \ref{2.3.1}. We have to prove that $ \Theta_d (\varphi) = \MV (  \theta'_d (\varphi)  )$, that is, for $x\in K^m$,  $\Theta_d (\varphi)(x)$ is the mean value at infinity of the function $n\mapsto \theta'_d (\varphi)(x,n)$.
It is clear that
$$
  \gamma (\varphi) (x, n) =  \gamma' (\varphi) (x, n) -  \gamma' (\varphi) (x, n+1)
$$
and that
$$
\theta_d (\varphi) (x, n) =   \frac{1}{(1-q^{-d})}  \big( \theta_d' (\varphi) (x, n) -  q^{-d}\theta_d' (\varphi) (x, n+1) \big) .
$$
Now we are done by the following fact, which holds for any real constant $b\not=1$. If a function $f:\NN\to\RR$ has a mean value at infinity, then so does $g:\NN\to\RR:n\mapsto \frac{1}{1-b} ( f(n) - b f(n+1) )$, and their mean values at infinity are equal.
\end{proof}

\begin{example}
Let us note that in the example of \ref{fstart} of points of even
valuation in $K$, one gets $\Theta_1 (X) (0) = \frac{1}{2}$. More
generally, if $\Lambda$ is a  definable open subgroup of finite
index $r$ in $K^{\times}$ and $y$ is a point in $K^{\times}$, we
have $\Theta_1 (\Lambda y) (0) = \frac{1}{r}$. Indeed, it is
easily checked that $\Theta_1 (\Lambda y) (0)$ does not depend on
$y$, hence if $y_1$, \dots, $y_r$ is a set of representative of
$K^{\times} / \Lambda$, we have $1 = \Theta_1 (\cup_{1 \leq i \leq
r} \Lambda y_i) (0) = r \Theta_1 (\Lambda) (0)$.
\end{example}

\begin{prop}Let $\varphi$ be a bounded function in $\cC (X)$ and assume $X$ is of dimension $d$.
Denote by $\tilde \varphi$ the extension of $\varphi$ by zero on $K^m$.
Then the support of $\tilde \varphi - \Theta_d (\varphi) $ is contained in an $\cL$-definable set of dimension $< d$.
\end{prop}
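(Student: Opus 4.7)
The plan is to exhibit a definable set $E \subset K^m$ of dimension $<d$ outside which $\tilde\varphi(x) = \Theta_d(\varphi)(x)$, by reducing to a local model where the integral defining $\theta_d(\varphi)(x,n)$ becomes an average of $\varphi$ over a standard sphere in $K^d$. First apply Proposition \ref{1.4} with some fixed $\varepsilon \leq 1$ to decompose $X = Y \cup \bigsqcup_i A_i$, where $\dim Y < d$ and each $A_i = \gamma_i(\Gamma_i)$ is the image under an isometry $\gamma_i \in \GL_m(R)$ of the graph of an $\varepsilon$-analytic map $\varphi_i : U_i \to K^{m-d}$ on a definable open $U_i \subset K^d$. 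By Lemma \ref{ana-lip} each $\varphi_i$ is locally $\varepsilon$-Lipschitz, so by the ultrametric inequality together with $\varepsilon \leq 1$ the projection from the graph to $U_i$ is a local isometry. Further refining via Lemma \ref{prop:descrip:simple} applied to $\varphi$, one may assume $\varphi|_{A_i}$ is continuous for every $i$; the additional boundary pieces introduced are of dimension $<d$ and can be absorbed into $Y$. Set
\[
E := Y \cup \bigcup_i \bigl( \overline{A_i} \setminus A_i \bigr) \cup \bigcup_{i \neq j} \bigl( A_i \cap \overline{A_j} \bigr),
\]
which is definable with $\dim E < d$.

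For $x \notin \overline{X}$, one has $\tilde\varphi(x) = 0$, and for $n$ large enough $B(x,n) \cap X = \emptyset$, whence $\theta_d(\varphi)(x,n) = 0$ eventually and therefore $\Theta_d(\varphi)(x) = 0 = \tilde\varphi(x)$.

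For $x \in \overline{X} \setminus E$, the point $x$ lies in a unique piece $A_i$ and is separated from every $\overline{A_j}$ with $j \neq i$ and from $Y$; hence for $n$ sufficiently large, $B(x,n) \cap X = B(x,n) \cap A_i$. Applying $\gamma_i^{-1}$ (an isometry of $K^m$) and then projecting to the first $d$ coordinates, $A_i$ is locally identified isometrically with $U_i \subset K^d$, and $\mu_d$ pulls back to the Haar measure on $K^d$ by Proposition \ref{jacprop}. Under this identification $S(x,n) \cap A_i$ corresponds to a translate of the standard sphere $S_d(n) \subset K^d$, and continuity of $\varphi$ at $x$ together with boundedness yield
\[
\theta_d(\varphi)(x,n) \;=\; \frac{1}{\mu_d(S_d(n))} \int_{S(x,n)\cap A_i} \varphi\, \mu_d \;\longrightarrow\; \varphi(x)
\]
as $n \to \infty$. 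Therefore $\Theta_d(\varphi)(x) = \MV\bigl(\theta_d(\varphi)(x,\cdot)\bigr) = \varphi(x) = \tilde\varphi(x)$.

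The main obstacle is to ensure that after a finite refinement of the decomposition $\varphi$ is continuous on every piece $A_i$, while the extra boundary contributions remain of dimension $<d$; this is a careful use of Lemma \ref{prop:descrip:simple}, using that at an interior point of a cell (where $t \neq c(x)$) the building blocks $|(t-c(x))\lambda^{-1}|^{1/n}$ and $\ord(t-c(x))$ are continuous resp.\ locally constant, and that the factors $h(x) \in \cC(K^m)$ can be arranged analytic by further subdivision, with the newly-introduced lower-dimensional boundaries being absorbed into $E$.
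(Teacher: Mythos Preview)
Your argument is correct and follows essentially the same strategy as the paper's: both reduce, via iterated use of Lemma~\ref{prop:descrip:simple} and a decomposition of $X$ into smooth $d$-dimensional pieces, to the situation where locally $X$ is a smooth submanifold and $\varphi$ is well-behaved, so that $\Theta_d(\varphi)(x)=\varphi(x)$ by direct computation. The paper is terser---it arranges $\varphi$ to be locally constant rather than merely continuous, making the limit immediate---whereas you spell out the local isometry with $K^d$ explicitly through Proposition~\ref{1.4}.

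Two small corrections. First, your appeal to Proposition~\ref{jacprop} for ``$\mu_d$ pulls back to the Haar measure on $K^d$'' is misplaced: that proposition concerns the one-variable Jacobian property. The correct justification is the definition of $\mu_d$ in \S\ref{pmes}: on the graph of an $\varepsilon$-analytic map with $\varepsilon\le 1$, every $d\times d$ minor of the Jacobian of $z\mapsto(z,\varphi_i(z))$ has norm $\le 1$, with equality for $J=\{1,\dots,d\}$, so $\sup_J|g_J|=1$. Second, your case split omits the possibility $x\in\overline{Y}\setminus\bigcup_i\overline{A_i}$ (so $x\in\overline{X}\setminus E$ but $x$ lies in no $A_i$); this is harmless since then $B(x,n)\cap X\subset Y$ for large $n$ and $\mu_d(Y)=0$, but cleanest is simply to replace $Y$ by $\overline{Y}$ in your definition of $E$.
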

\begin{proof}
Suppose $X\subset K^m$. Since for $x$ not in  $\overline X$, for
all sufficiently large $n$ one has that $\theta_d (\varphi) (x,
n)=0$,
the support of
$\Theta_d (\varphi)$ is contained in the closure $\overline X$ of $X$
in $K^n$. After removing a subset of dimension $<d$ we may assume $X$ is a smooth subvariety and $\varphi$
is locally constant (for example after an iterated application of Lemma \ref{prop:descrip:simple}), in which case the result is clear.
\end{proof}

\subsection{}For further use we shall give some basic properties of local densities.

\begin{prop}\label{bord}
Let $X$ be definable subset  of dimension $d$ of $K^m$. Then
$$\Theta_d (X) (x) =
\Theta_d (\overline X) (x),
$$
where $\overline X$ denotes the closure of $X$.
\end{prop}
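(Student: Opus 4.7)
The plan is to reduce the statement to the fact that the frontier $\overline{X}\setminus X$ is a definable set of dimension strictly less than $d$, so it contributes nothing to the $d$-dimensional local density.

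First I would verify the dimension claim: for any $\cL$-definable subset $X\subset K^m$, the set $\overline{X}\setminus X$ is again $\cL$-definable (taking closure preserves definability in this setting) and has dimension strictly less than $\dim X=d$. This is a standard property of the dimension theory of $\cL$-definable sets, as developed in \cite{sd} and \cite{DvdD}. In particular, $\overline X$ itself has dimension $d$, so $\Theta_d(\overline X)(x)$ is well-defined.

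Next I would exploit additivity. Since $X$ and $\overline{X}\setminus X$ are disjoint, we have $\11_{\overline X}=\11_X+\11_{\overline X\setminus X}$, hence for every $(x,n)\in K^m\times\NN$,
\begin{equation*}
\gamma(\11_{\overline X})(x,n)=\gamma(\11_X)(x,n)+\gamma(\11_{\overline X\setminus X})(x,n),
\end{equation*}
by linearity of the integral defining $\gamma$ in (\ref{gn}). Dividing by $\mu_d(S_d(n))$ we obtain the analogous identity for $\theta_d$, and hence, after applying Proposition \ref{lem:pos}, for $\Theta_d$.

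Therefore the proposition reduces to showing that $\Theta_d(Y)(x)=0$ for any $\cL$-definable set $Y$ of dimension $<d$. For such a $Y$, the construction of $\mu_d$ recalled in \ref{pmes} gives $\mu_d(Y)=0$: by the cited references, $Y$ admits a definable smooth open submanifold $Y'$ of dimension $\dim Y<d$ with $\dim(Y\setminus Y')<\dim Y$, and the top-form construction of $\mu_d$ with respect to a $d$-dimensional embedding is identically zero on such a lower-dimensional submanifold. Consequently $\gamma(\11_Y)(x,n)=\mu_d(S(x,n)\cap Y)=0$ for all $(x,n)$, so $\theta_d(\11_Y)\equiv 0$, whence $\Theta_d(Y)(x)=0$, and the proposition follows by applying this to $Y=\overline X\setminus X$.

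The only mildly delicate point is the dimension statement $\dim(\overline X\setminus X)<d$, which is a property of the $\cL$-definable dimension and is available from \cite{sd} and \cite{DvdD}; the rest of the argument is then just linearity of $\gamma$ and the vanishing of $\mu_d$ on lower-dimensional definable sets.
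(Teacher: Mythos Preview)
Your proof is correct and follows essentially the same approach as the paper: reduce by additivity to showing that $\Theta_d(\overline X\setminus X)(x)=0$, and deduce this from the fact that $\overline X\setminus X$ is definable of dimension $<d$. The paper's proof is a two-line sketch of exactly this argument, while you have filled in the details (definability and dimension of the frontier, linearity at the level of $\gamma$ and $\theta_d$, and why $\mu_d$ vanishes on lower-dimensional sets).
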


\begin{proof}Indeed, by additivity it is enough to prove
that
$\Theta_d (\overline X \setminus X) (x) = 0$, which follows from the fact that
$\Theta_d (Y) (x) = 0$ when $Y$ is definable of dimension $< d$.
\end{proof}

%
%

\begin{prop}\label{convmon} Let $X$ be an $\cL$-definable set of dimension $d$ and let $M>0$ be a constant. Consider a series of functions $\varphi_n:X\to\RR$, $n \in \NN$, such that the function $(x, n) \mapsto \varphi_n (x)$ lies in $\cC (X \times \NN)$
and such that
$0 \leq \varphi_n\leq \varphi_{n+1} \leq \dots \leq M$ for all $n$.
Then the function $\varphi$ defined as $\sup \varphi_n$ lies in $\cC(X)$ and is bounded.
Moreover,
$$\Theta_d (\varphi) (x) = \lim_n \Theta_d (\varphi_n) (x)$$
and
$$
0\leq \Theta_d (\varphi_n) (x)\leq \Theta_d (\varphi_{n+1}) (x)
$$
for each $n$ and $x$.
\end{prop}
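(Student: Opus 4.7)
The plan is to prove the three claims in turn: first that $\varphi=\sup_n\varphi_n$ lies in $\cC(X)$ and is bounded by $M$, then the monotonicity statement (3), and finally the convergence statement (2), which will be the main obstacle.

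For (1), I apply Proposition \ref{lem:pos} to the $X$-bounded function $(x,n)\mapsto\varphi_n(x)\in\cC(X\times\NN)$, and combine it with Lemma \ref{lem:MV} applied to the resulting finite-image correction $\varphi_n-\varphi'$. This yields a definable $b:X\to\NN$, an integer $e>0$, a function $\varphi'\in\cC(X\times\NN)$ with $\varphi'(x,n)\to 0$, and $\cC$-functions $d_0,\dots,d_{e-1}$ on $X$ such that $\varphi_n(x)=\varphi'(x,n)+d_c(x)$ whenever $n\geq b(x)$ and $n\equiv c\pmod e$. The monotone bounded sequence $\varphi_n(x)$ has a limit $\varphi(x)$, which along each residue class $n\equiv c$ equals $d_c(x)$; hence all $d_c$ coincide with $\varphi$, so $\varphi\in\cC(X)$ and $0\leq\varphi\leq M$. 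For (3), linearity gives $\Theta_d(\varphi_{n+1})(x)-\Theta_d(\varphi_n)(x)=\Theta_d(\varphi_{n+1}-\varphi_n)(x)$, and this is non-negative because $\theta_d(\varphi_{n+1}-\varphi_n)\geq 0$ (integrand $\geq 0$) and $\MV$ preserves positivity by Proposition \ref{lem:pos}.

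For (2), set $\psi_n:=\varphi-\varphi_n\in\cC(X)$; it is non-negative, bounded by $M$, monotone decreasing, and $\psi_n(y)\to 0$ pointwise. By linearity it suffices to show $\Theta_d(\psi_n)(x)\to 0$. I consider the two-parameter $\cC$-function $\Psi(x,n,m):=\theta_d(\psi_n)(x,m)\in\cC(X\times\NN^2)$, which is bounded by $M$ (cf.\ the proof of Lemma \ref{2.3.1}). The bi-variable structural theorem for $\cC$-functions on $X\times\NN^2$, obtained by iterating Lemma \ref{prop:descrip:simple} and invoking quantifier elimination in $\cL'$, gives, after a finite partition of $X$ and for each pair of residues $(c,c')$ of $(n,m)$ modulo a common $e$, a representation for large $(n,m)$
\[
\Psi(x,n,m)=\sum_i n^{\ell_i}\, m^{\ell'_i}\, q^{a_i n+b_i m}\, h_{i,c,c'}(x)
\]
with the quadruples $(\ell_i,a_i,\ell'_i,b_i)$ pairwise distinct and $h_{i,c,c'}\in\cC(X)$. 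Boundedness in $n$ for fixed $m$ and in $m$ for fixed $n$ forces every surviving term to satisfy $a_i\leq 0$ with $a_i=0\Rightarrow\ell_i=0$, and similarly $b_i\leq 0$ with $b_i=0\Rightarrow\ell'_i=0$.

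Lebesgue dominated convergence on the finite-$\mu_d$-measure set $S(x,m)\cap X$ then yields $\lim_n\Psi(x,n,m)=0$ for every fixed $(x,m)$; inserting the representation and using linear independence of the functions $m\mapsto m^{\ell'_i} q^{b_i m}$ over distinct pairs $(\ell'_i,b_i)$ forces every coefficient $h_{i,c,c'}$ with $(a_i,\ell_i)=(0,0)$ to vanish. A direct computation of $\MV_m\Psi(x,n,m)$ extracts exactly the sub-sum with $(b_i,\ell'_i)=(0,0)$, giving an expression in $n$ of the form $\sum_i n^{\ell_i} q^{a_i n}\bar h_{i,c}(x)$ with $\bar h_{i,c}=\tfrac{1}{e}\sum_{c'}h_{i,c,c'}$; its limit as $n\to\infty$ along $n\equiv c$ retains only the coefficient with all four indices zero, which vanishes by the previous step. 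This gives $\lim_n\Theta_d(\psi_n)(x)=0$ along each residue class of $n$, and the monotonicity from (3) upgrades this to the full limit. The principal technical obstacle is the bi-variable structural decomposition of $\Psi$ and the careful bookkeeping of which quadruples survive under the successive boundedness and vanishing constraints.
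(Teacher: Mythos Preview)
Your proof is correct. The approach differs from the paper's: the paper argues more directly by first applying the Monotone Convergence Theorem to get $\theta_d(\varphi)(x,m)=\lim_n\theta_d(\varphi_n)(x,m)$ for each fixed $m$, notes the monotonicity $0\le\theta_d(\varphi_n)\le\theta_d(\varphi_{n+1})$, and then simply asserts that $\Theta_d(\varphi)(x)=\lim_n\Theta_d(\varphi_n)(x)$ follows ``by changing the order of limits over $n$ and over $m$''. You instead unpack the bi-variable $\cC$-structure of $\Psi(x,n,m)=\theta_d(\psi_n)(x,m)$ explicitly, isolate the surviving exponents via boundedness, and kill the relevant coefficients using dominated convergence and linear independence.

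What each buys: the paper's route is shorter, but the interchange of $\lim_n$ with $\MV_m$ is not innocent (monotonicity in $n$ alone does not guarantee it for arbitrary bounded double sequences), and a reader who wants to justify that step rigorously will in effect be led to the uniform $\cC$-structure you exploit. Your argument makes this structure explicit and is self-contained; it also shows clearly where the constructible hypothesis, and not just monotonicity, is doing the work. The only cosmetic point is that $\Psi$ naturally lives in $\cC(K^m\times\NN^2)$ rather than $\cC(X\times\NN^2)$, since $\theta_d$ is defined on the ambient space; this does not affect the argument.
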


\begin{proof}
Clearly the function $\varphi$ is bounded and lies in $\cC(X)$ by Lemma \ref{lem:pos}.
Note that
$$
 \gamma (\varphi)(x,m)  = \lim_n \gamma (\varphi_n)(x,m)
$$
for each $m$ and $x$ by the Monotone Convergence Theorem.
Hence, by the definition of $\theta_d$, also
\begin{equation}\label{thetan}
 \theta_d (\varphi)(x,m)  = \lim_n \theta_d (\varphi_n)(x,m)
 \end{equation}
for each $m$ and $x$.
Clearly
\begin{equation}\label{nleq}
0\leq \gamma (\varphi_n)\leq  \gamma (\varphi_{n+1})\
\mbox{ and }\
0\leq \theta_d (\varphi_n)\leq \theta_d (\varphi_{n+1})
\end{equation}
for all $n$, on the whole of $X$, and hence
$$
0\leq \Theta_d (\varphi_n) (x)\leq \Theta_d (\varphi_{n+1}) (x),
$$
by the definition of $\Theta_d$.
Now the equality $\Theta_d (\varphi) (x) = \lim_n \Theta_d (\varphi_n) (x)$ follows from (\ref{thetan}), (\ref{nleq}), and the definitions of $\MV$ and $\Theta_d$, by changing the order of limits over $n$ and over $m$.
\end{proof}

\section{Tangent cones}
\subsection{Cones}
We shall consider the set
$\cD$ of open subgroups of $K^\times$ which are of finite index in
$K^{\times}$. We order $\cD$ by inclusion. Note that for each $n>0$, the group $P_n$ of the $n$th powers in $K^\times$ lies in $\cD$, and any $\Lambda$ in $\cD$ equals, as a set, a finite disjoint union of cosets of some $P_n$, see Lemma \ref{cones1}, and is thus $\cL$-definable.
 We shall say a certain property (P) holds for $\Lambda$ small enough, if there exists $\Lambda_0$ in $\cD$ such that (P) holds for every $\Lambda\in \cD$ contained in $\Lambda_0$.

Let $\Lambda$ be a subgroup of $K^\times$ in $\cD$.
It acts naturally on $K^n$ by multiplicative translation $\lambda \cdot  z := \lambda z$, that is, by scalar multiplication on the vector space $K^n$.
By a $\Lambda$-cone in $K^n$ we mean a subset $C$ of
 $K^n$ which is stable under the $\Lambda$-action,
 that is, $\Lambda\cdot C \subset C$ (note that this implies that $\Lambda\cdot C = C$).
More generally, if $x\in K^n$,
by a
$\Lambda$-cone with origin $x$  we mean a subset $C$ of
 $K^n $ such that $C-x$ is stable under the
$\Lambda$-action, where $C-x=\{t\in K^n\mid t+x\in C\}$.
By a local $\Lambda$-cone with origin $x$, we mean a set of the form
$C \cap B (x, n)$, with $C$ a $\Lambda$-cone with origin $x$
and $n$ in $\NN$.

In Lemma \ref{cones1} we describe all possible $\Lambda$-cones which are subsets of $K$, which turns out to be very similar to the real situation. In section \ref{sec:localc} we will show that definable sets in dimension $1$ locally look like local $\Lambda$-cones (Lemma \ref{coneone}), and similarly in families of definable subsets of $K$ (Corollary \ref{corcone1}). From \ref{sec:tangentc} on we will define and study tangent cones and related objects, and formulate one of our main results on the relation between local densities of definable sets and of their tangent cones, viewed with multiplicities (the $p$-adic analogue of Thie's result), see Theorem \ref{mt}.

\begin{lem}\label{cones1}
Let $C\subset K$ be a set. Then $C$ is a $\Lambda$-cone for some $\Lambda$ in $\cD$ if and only if it is either the empty set or it is a finite disjoint union of sets of the form $\lambda P_n$ with $n>0$ and $\lambda\in K$. Hence, any cone $C\subset K$ is a definable set.
\end{lem}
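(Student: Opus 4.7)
The plan is to prove both directions by exploiting the fact that every $\Lambda \in \cD$ sandwiches some $P_N$, and that $K^\times/P_N$ is finite for each $N$.

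For the easy direction, suppose $C$ is either empty (which is trivially a $\Lambda$-cone for every $\Lambda$) or a finite disjoint union $C = \bigsqcup_{i=1}^r \lambda_i P_{n_i}$ with $\lambda_i \in K$ (where $\lambda_i = 0$ is allowed and gives $\{0\}$). Set $N := \lcm(n_1,\dots,n_r)$. The key point is that $n_i \mid N$ forces $P_N \subseteq P_{n_i}$: if $x = y^N$ and $N = n_i m$, then $x = (y^m)^{n_i} \in P_{n_i}$. Hence each piece $\lambda_i P_{n_i}$ is stable under multiplication by $P_N$ (the case $\lambda_i = 0$ being trivial), so $C$ is a $P_N$-cone; and $P_N$ is open of finite index in $K^\times$, hence lies in $\cD$.

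For the converse, let $C$ be a $\Lambda$-cone with $\Lambda \in \cD$. The crucial observation is that $\Lambda$ contains $P_N$ for $N := [K^\times : \Lambda] < \infty$, because the finite group $K^\times/\Lambda$ has exponent dividing $N$, so $x^N \in \Lambda$ for every $x \in K^\times$. Therefore $C$ is in particular a $P_N$-cone, that is, $C$ is a union of $P_N$-orbits on $K$. The orbits are $\{0\}$ and the cosets $\lambda P_N$ with $\lambda \in K^\times$; since $K^\times/P_N$ is finite (a standard fact for $p$-adic local fields, or: Macintyre's language asserts $P_N$ is of finite index), only finitely many such cosets can occur, so $C$ is a finite disjoint union of sets of the form $\lambda P_N$ (with $\lambda = 0$ contributing $\{0\}$ if $0 \in C$).

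Finally, definability is immediate: each set $\lambda P_n$ is defined by the $\cL_{\rm Mac}$-formula ``$\exists y\, x = \lambda y^n$'', so any finite union is $\cL$-definable. The only step that requires a small amount of care is establishing $P_N \subseteq \Lambda$ from finiteness of the index; this is where I would slow down and spell out the group-theoretic argument, but there is no real obstacle since everything reduces to elementary facts about $K^\times$ and its power subgroups.
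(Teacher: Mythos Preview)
Your proof is correct and follows the same overall architecture as the paper's: reduce to showing that some $P_N$ is contained in $\Lambda$, then decompose $C$ into finitely many $P_N$-cosets (plus possibly $\{0\}$).

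The one step where you genuinely diverge is the argument that $P_N \subseteq \Lambda$. You invoke Lagrange's theorem on the finite abelian group $K^\times/\Lambda$: with $N = [K^\times:\Lambda]$, every coset has order dividing $N$, hence $x^N \in \Lambda$ for all $x \in K^\times$. The paper instead argues from the topology and valuation structure of $K^\times$: openness of $\Lambda$ gives $1+\cM_K^\ell \subset \Lambda$ for some $\ell$, finite index gives $\pi_K^{n_1}\in\Lambda$ for some $n_1$, and then one takes $n$ a large enough multiple of $n_1$ so that $(R^\times)^n \subset 1+\cM_K^\ell$. Your argument is shorter and purely group-theoretic; it never uses that $\Lambda$ is open (only that it has finite index), which is a mild gain in generality. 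The paper's argument, by contrast, is more constructive and makes the $p$-adic structure visible, which fits the ambient style of the article. Either approach is perfectly adequate here.
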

\begin{proof}
Clearly the empty set is a $\Lambda$-cone for all $\Lambda$ and $\lambda\cdot P_n$ is a $\Lambda$-cone for $\Lambda\subset P_n$, and similarly for their finite unions. Now let $C$ be a nonempty $\Lambda$-cone for some $\Lambda$ in $\cD$. Either $C=\{0\}$ and we are done, or, up to replacing $C$ by $tC$ for some nonzero $t\in K$, we may suppose that $1\in C$. But then $\Lambda\subset C$ and $C\setminus \Lambda$ is still a $\Lambda$-cone. Since the index of $\Lambda$ in $K^\times $ is finite, it follows by a finite process  that $C$ consists of a finite union of sets of the form $\mu\Lambda$ with $\mu\in K$. It remains to prove that $\Lambda$ itself is a finite disjoint union of sets of the form $\lambda P_n$,
for some $n\in\NN$ and some $\lambda\in K^\times$. Since $\Lambda$ has finite index in $K^\times$, it must contain an open neighborhood $U=1+\cM_K^\ell$ of $1$ for some $\ell>0$ and with $\cM_K$ the maximal ideal of $R$. Let $\pi_K$ be a uniformizer of $R$. Since $\Lambda$ has finite index in $K^\times$, there exists $n_1>0$ such that $\pi_K^{n_1}$ lies in $\Lambda$. Now let $n$ be a big enough multiple of $n_1$ such that $t^n\in U$ for all $t\in R^\times$. Then clearly $P_n\subset \Lambda$   and we are done since $P_n$ has finite index in $K^\times$ and hence also in $\Lambda$.
\end{proof}

\subsection{Local conic structure of definable sets}\label{sec:localc}

Let $X$ be a definable subset of $K^n$ and let $x$ be a point in $K^n$. We denote by $\pi_x : K^n \setminus \{x\}\rightarrow
\PP^{n - 1} (K)$ the function which to a point $z \not=x$ assigns
the line containing $x$ and $z$. That is, for $x=0$, $\pi_0 : K^n \setminus \{0\}\rightarrow
\PP^{n - 1} (K)$ is the natural projection, and, for nonzero $x$, the map $\pi_x$ is the composition of $\pi_0$ with the translation $K^n\setminus \{x\}\to K^n \setminus \{0\}: y\mapsto y-x$.
Furthermore we denote by
$$
\pi_x^X : X \setminus \{x\}
\rightarrow \pi_x (X \setminus \{x\})
$$
 the restriction of $\pi_x$ to $X
\setminus \{x\}$.
\begin{lem}\label{coneone}
Let $Y$ be a definable subset of $K$. Then there exist $\Lambda$
in $\cD$ and a definable function $\gamma: K \to \NN$ such that $Y \cap B(y,\gamma(y))$ is a local $\Lambda$-cone with origin
$y$, for all $y\in K$. If one writes $Y$ as a finite disjoint union of cells $Y_i$ with cosets $\lambda_i P_{n_i}$, then one can take $\Lambda=P_N$
with $N=\lcm( n_i)_i$. 
\end{lem}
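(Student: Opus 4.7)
By the Cell Decomposition Theorem \ref{thm:CellDecomp}, write $Y = \bigsqcup_i Y_i$ as a finite disjoint union of cells with centers $c_i$ and cosets $\lambda_i P_{n_i}$, and set $N := \lcm_i n_i$. Since finite unions of $\Lambda$-cones with a common origin $y$ are again $\Lambda$-cones with origin $y$, and $(C_1 \cup C_2) \cap B = (C_1 \cap B) \cup (C_2 \cap B)$, it suffices to produce, for each cell $Y_i$ and each $y \in K$, a definable $\gamma_i : K \to \NN$ and a $P_N$-cone $C_i(y)$ with origin $y$ such that
\[
Y_i \cap B(y,\gamma_i(y)) = C_i(y) \cap B(y,\gamma_i(y)).
\]
Then $\gamma(y) := \max_i \gamma_i(y)$ and $C(y) := \bigcup_i C_i(y)$ will satisfy the claim for $Y$.

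The case $y = c_i$ is direct. If the cell carries a nonzero lower norm bound $|\alpha_i| > 0$, take $\gamma_i(y) := \ord(\alpha_i)+1$ so that every $t$ with $|t - c_i| \le q^{-\gamma_i(y)}$ violates $|t-c_i|>|\alpha_i|$; then $Y_i \cap B(c_i,\gamma_i(y)) = \emptyset$ and $C_i(y) := \emptyset$ works. Otherwise, for $\gamma_i(y)$ exceeding $\ord(\beta_i)$ (vacuous if there is no upper bound) the upper bound of the cell becomes automatic inside the ball, and
\[
Y_i \cap B(c_i,\gamma_i(y)) = \{t : |t-c_i| \le q^{-\gamma_i(y)},\ t-c_i \in \lambda_i P_{n_i}\} = C_i(y) \cap B(c_i,\gamma_i(y)),
\]
where $C_i(y) := c_i + \lambda_i P_{n_i}$ is a $P_{n_i}$-cone, hence a $P_N$-cone, with origin $c_i$.

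For the case $y \neq c_i$, the key technical input is Hensel's lemma, which gives a fixed integer $k_0$ with $1 + \cM_K^{k_0} \subseteq P_N$. Setting $\gamma_i(y) := \ord(y - c_i) + k_0$, any $t \in B(y,\gamma_i(y))$ satisfies $|t - c_i| = |y - c_i|$ and $t - c_i = (y - c_i)\bigl(1 + \epsilon\bigr)$ with $1 + \epsilon \in P_N$, so $t - c_i \in (y - c_i)P_N$. If $y \in Y_i$, then $|y-c_i|$ satisfies the norm bounds of the cell and $y - c_i \in \lambda_i P_{n_i}$; both conditions propagate to $t$ (using $(y-c_i) P_N \subseteq \lambda_i P_{n_i} \cdot P_N \subseteq \lambda_i P_{n_i}$ since $P_N \subseteq P_{n_i}$), so $B(y,\gamma_i(y)) \subseteq Y_i$ and we take $C_i(y) := K$, which is trivially a $P_N$-cone with origin $y$. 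If $y \notin Y_i$, then either $|y - c_i|$ violates a norm bound of $Y_i$ or $y - c_i \notin \lambda_i P_{n_i}$; since $|t-c_i|=|y-c_i|$ and the coset of $y-c_i$ modulo $P_{n_i}$ is preserved, the same violation holds for every $t \in B(y,\gamma_i(y))$, giving $B(y,\gamma_i(y)) \cap Y_i = \emptyset$ and $C_i(y) := \emptyset$.

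The only delicate point is the boundary situation $y \in \overline{Y_i} \setminus Y_i$ with $y \neq c_i$, corresponding to $y$ lying on a norm-sphere that is a strict boundary of the cell (the coset part is clopen in $K^\times$, so it contributes no genuine boundary off $c_i$). Here the argument works uniformly because shrinking the ball forces $|t - c_i|$ to be \emph{exactly} $|y - c_i|$, which turns the strict/non-strict distinction into a uniform yes/no across the whole ball. All thresholds used ($\ord(\alpha_i)$, $\ord(\beta_i)$, $\ord(y - c_i)$, and the absolute constant $k_0$) are definable in $y$, and taking the maximum with $0$ and across all cells yields a definable $\gamma : K \to \NN$ with the required property.
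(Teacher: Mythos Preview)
Your proof is correct and follows essentially the same approach as the paper's: reduce to a single cell via cell decomposition, then analyze cases according to the position of $y$ relative to the center $c_i$. The paper's version is phrased more topologically (after translating $c$ to $0$, the cell is open with $\overline{Y}\setminus Y\subseteq\{0\}$, so one distinguishes $y$ in the interior, $y$ outside the closure, and $y=0$), while you make the same trichotomy explicit via the Hensel input $1+\cM_K^{k_0}\subseteq P_N$ and concrete formulas for $\gamma_i(y)$; your remark about the boundary situation $y\in\overline{Y_i}\setminus Y_i$ with $y\neq c_i$ is in fact vacuous (such points do not exist by the ultrametric inequality), but this does no harm.
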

\begin{proof}
The definability of $\gamma$ is not an issue by the definability of the conditions of being a local $\Lambda$-cone with origin $y$ and so on.
 By definition, a finite union of local $\Lambda$-cones is again a local $\Lambda$-cone.
Hence, up to a finite partition using cell decomposition, we may suppose that $Y$ is a cell. Thus, $Y$ is of the form
 $$
Y= \{t\in K\mid |\alpha|\sq_{1} |t-c|\sq_{2} |\beta|,\
  t-c\in \lambda P_{n}\},
 $$
for some constants $n>0$, $\lambda,c$ in $K$, $\alpha,\beta$ in
$K^\times$, and $\square_{i}$ either $<$ or no condition.
  Up to a transformation $t\mapsto t-c$ we may suppose that $c=0$. We may exclude the trivial case that $Y$ is a singleton, that is, we may suppose that $\lambda\not=0$.
Then $Y$ is open, and moreover, $Y$ is closed if and only if $\sq_2$ is $<$. In the case that $\sq_2$ is no condition, then the closure of $Y$ equals $Y\cup\{0\}$.
Take $y\in K$. If $y$ lies outside the closure of $Y$, then $Y\cap B(y,n)$ is empty for sufficiently large $n$, and the empty set is a $\Lambda$-cone for any $\Lambda$ in $\cD$. Also, if $y$ lies in the interior of $Y$, then $Y\cap B(y,n)$ is a ball around $y$ for sufficiently large $n$, and hence it is a local $\Lambda$-cone with origin $y$, for any $\Lambda$ in $\cD$. Finally, if $y=0$ and $y$ lies in the closure of $Y$, then $Y\cap B(y,n)= \lambda P_n \cap B(y,n)$ for sufficiently large $n$, which is clearly a local $\Lambda$-cone with origin $y$ for any $\Lambda$ contained in $P_n$.
\end{proof}

The following two corollaries of Lemma \ref{coneone} are immediate.
\begin{cor}\label{corcone1}
Let $Y$ be a definable subset of $K^{m+1}$. For each $x\in K^m$ write $Y_x$ for $\{t\in K\mid (x,t)\in Y\}$. Then there exist $\Lambda$
in $\cD$ and a definable function $\gamma: K^{m+1} \to \NN$ such that $Y_x \cap B(t,\gamma(x,t))$ is a local $\Lambda$-cone with origin
$t$, for all $(x,t)\in K^{m+1}$. If one writes $Y$ as a finite disjoint union of cells $Y_i$ with cosets $\lambda_i P_{n_i}$, then one can take $\Lambda=P_N$
with $N=\lcm( n_i)_i$.
\end{cor}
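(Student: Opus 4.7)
The plan is to run the argument of Lemma \ref{coneone} with parameters. Apply the $p$-adic Cell Decomposition Theorem \ref{thm:CellDecomp} to $Y \subset K^{m+1}$, viewing the last coordinate $t$ as the distinguished variable, to obtain a finite partition of $Y$ into cells $Y_i$ with centers $c_i : p_m(Y_i) \to K$ (analytic) and cosets $\lambda_i P_{n_i}$. Set $N := \lcm(n_i)_i$ and $\Lambda := P_N$; since $P_N \subset P_{n_i}$ for each $i$, each coset $\lambda_i P_{n_i}$ is a disjoint union of translates of $\Lambda$-cosets.

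Fix $x \in K^m$ and consider the fiber $(Y_i)_x$, which is either empty, the singleton $\{c_i(x)\}$ (if $\lambda_i = 0$), or a $1$-cell in $K$ of the form
\[
\{t \in K \mid |\alpha_i(x)|\, \sq_1 \,|t - c_i(x)|\, \sq_2\, |\beta_i(x)|,\ t - c_i(x) \in \lambda_i P_{n_i}\}.
\]
Just as in the proof of Lemma \ref{coneone}, given any $t \in K$, there exists an integer $n$ (depending on $x$ and $t$) such that $(Y_i)_x \cap B(t,n)$ is a local $\Lambda$-cone with origin $t$: namely, $n$ can be chosen large enough so that $B(t,n)$ either misses the closure of $(Y_i)_x$, lies entirely in its interior, or (when $t = c_i(x)$ is the unique limit point coming from $\sq_2$ being absent) reduces to $\lambda_i P_{n_i} \cap B(t,n)$, which is a $\Lambda$-cone since $\Lambda \subset P_{n_i}$.

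Now define
\[
\gamma(x,t) := \min\{n \in \NN \mid Y_x \cap B(t,n) \text{ is a local } \Lambda\text{-cone with origin } t\}.
\]
The condition inside the braces is expressible as a first-order $\cL$-formula in $(x,t,n)$: it asserts that for every $z \in Y_x \cap B(t,n)$ and every $\mu \in \Lambda$, if $\mu(z-t) \in B(0,0)$ then $t + \mu(z-t) \in Y_x \cap B(t,n)$, together with the corresponding inclusion in the other direction. Since $\Lambda = P_N$ is $\cL$-definable (as a finite union of cosets of $P_N$, trivially itself), this formula is $\cL$-definable, and the set of valid $n$ is nonempty by the fiberwise argument above; hence $\gamma$ is a well-defined definable function $K^{m+1} \to \NN$.

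The main (and only) subtlety is the uniformity: that a single $\Lambda$ works for all $(x,t)$. This is ensured precisely by the cell decomposition, since $N = \lcm(n_i)$ depends only on the finite data of the cells $Y_i$, not on $x$. Finally, a finite union of local $\Lambda$-cones with origin $t$ is again a local $\Lambda$-cone with origin $t$ (possibly after shrinking $n$, i.e., increasing $\gamma$), which handles the combining of the contributions from the various $Y_i$ into $Y_x$.
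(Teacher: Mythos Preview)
Your argument is correct and follows exactly the route the paper intends: the paper simply declares Corollary~\ref{corcone1} ``immediate'' from Lemma~\ref{coneone}, and what you have written is precisely the parametrized version of that lemma's proof (cell decomposition in the $t$-variable, take $\Lambda=P_N$ with $N=\lcm(n_i)$, run the three-case analysis fiberwise, then observe definability of $\gamma$). One small slip: in your first-order description of ``local $\Lambda$-cone'' the condition should read $\mu(z-t)\in B(0,n)$, not $B(0,0)$, and the ``other direction'' is automatic since $1\in\Lambda$; neither point affects the validity of the argument.
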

We will most often use the following variant of Corollary \ref{corcone1}, which can be proved by working on affine charts.
\begin{cor}\label{localconic}
Let $X$ be a definable subset of $K^n$ and let $x$ be a point in
$K^n$. Then there exist a definable function $\alpha_x :
\PP^{n - 1} (K) \rightarrow \NN$, that is, $\alpha_x$ is definable on each
affine chart of  $\PP^{n - 1} (K)$, and a group $\Lambda$  in
$\cD$ such that
$$
(\pi_x^X)^{-1} (\ell) \cap B (x, \alpha_x (\ell))
$$
 is a local
$\Lambda$-cone with origin $x$ for every $\ell$ in $\PP^{n - 1} (K)$. Moreover,
$\Lambda$ can be taken independently of
$x$, and one can ensure that $(x,\ell)\mapsto \alpha_x(\ell)$ is a definable function from $K^n\times \PP^{n - 1} (K)$ to $\NN$.
\end{cor}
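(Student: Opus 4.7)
The plan is to reduce to Corollary \ref{corcone1} by covering $\PP^{n-1}(K)$ with its $n$ standard affine charts and applying the corollary chart by chart, keeping $x$ as a free parameter to obtain uniformity in $x$. For $i=1,\ldots,n$ let $U_i\subset\PP^{n-1}(K)$ be the chart of lines $[v]$ with $v_i\neq 0$, and let $\iota_i:K^{n-1}\to K^n$ be the definable map sending $\bar v=(\bar v_1,\ldots,\bar v_{n-1})$ to the vector whose $i$-th coordinate is $1$ and whose remaining coordinates are $\bar v_1,\ldots,\bar v_{n-1}$ in order, so that $[\iota_i(\bar v)]$ parametrizes $U_i$. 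Partition $\PP^{n-1}(K)$ definably into pieces $V_i:=U_i\setminus\bigcup_{j<i}U_j$, so each $\ell\in\PP^{n-1}(K)$ lies in a unique $V_i$ with a unique preimage $\bar v(\ell)$ under $\iota_i$.

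For each $i$, consider the definable subset
$$Z_i=\bigl\{(x,\bar v,t)\in K^n\times K^{n-1}\times K : x + t\,\iota_i(\bar v)\in X\bigr\}$$
of $K^{2n}$, with $t$ as distinguished last coordinate. Applying Corollary \ref{corcone1} to $Z_i$ yields a group $\Lambda_i\in\cD$ and a definable function $\gamma_i:K^{2n}\to\NN$ such that for every $(x,\bar v)$ the fiber $(Z_i)_{(x,\bar v)}\cap B(0,\gamma_i(x,\bar v,0))$ equals $C_{x,\bar v}\cap B(0,\gamma_i(x,\bar v,0))$ for some $\Lambda_i$-cone $C_{x,\bar v}\subset K$ with origin $0$. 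Set $\Lambda=\bigcap_{i=1}^n\Lambda_i$, still an open finite-index subgroup of $K^\times$ and hence in $\cD$, and define
$$\alpha_x(\ell):=\max\bigl(0,\ \gamma_i(x,\bar v(\ell),0)+\ord(\iota_i(\bar v(\ell)))\bigr)\in\NN$$
for $\ell\in V_i$. This is a definable function $(x,\ell)\mapsto\alpha_x(\ell)$ on $K^n\times\PP^{n-1}(K)$, with $\Lambda$ independent of $x$.

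To verify the conclusion, fix $\ell\in V_i$ and put $v=\iota_i(\bar v(\ell))$, so that $(\pi_x^X)^{-1}(\ell)=\{x+tv : t\in Y\setminus\{0\}\}$ with $Y=(Z_i)_{(x,\bar v(\ell))}$. Using $|tv|=q^{-\ord v}|t|$, the condition $x+tv\in B(x,\alpha_x(\ell))$ reads $\ord t\geq \alpha_x(\ell)-\ord v$, and the definition of $\alpha_x(\ell)$ (a short case split on the sign of $\gamma_i(x,\bar v(\ell),0)+\ord v$) shows this forces $\ord t\geq\gamma_i(x,\bar v(\ell),0)$, i.e.~$t\in B(0,\gamma_i(x,\bar v(\ell),0))$. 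Combining with the cone conclusion of Corollary \ref{corcone1} gives
$$(\pi_x^X)^{-1}(\ell)\cap B(x,\alpha_x(\ell))=\bigl(x + v\cdot(C_{x,\bar v(\ell)}\setminus\{0\})\bigr)\cap B(x,\alpha_x(\ell)),$$
and since $\Lambda_i$ acts on $K\setminus\{0\}$ preserving $C_{x,\bar v(\ell)}\setminus\{0\}$, the set $x+v\cdot(C_{x,\bar v(\ell)}\setminus\{0\})$ is a $\Lambda_i$-cone with origin $x$, hence a $\Lambda$-cone with origin $x$ since $\Lambda\subset\Lambda_i$.

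The only real obstacle is bookkeeping: ensuring definability across charts, enforcing $\alpha_x(\ell)\geq 0$ via the $\max$ with $0$, and the harmless removal of $0$ from $C_{x,\bar v(\ell)}$ so that the equality with $(\pi_x^X)^{-1}(\ell)\subset X\setminus\{x\}$ is exact. No geometric input beyond Corollary \ref{corcone1} is needed; the statement is essentially a chart-wise reformulation of that corollary.
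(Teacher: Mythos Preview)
Your proof is correct and follows exactly the approach the paper indicates, namely reducing to Corollary~\ref{corcone1} by working on the standard affine charts of $\PP^{n-1}(K)$ with $x$ carried along as a parameter. The bookkeeping with $\max(0,\cdot)$ and the removal of $0$ from $C_{x,\bar v(\ell)}$ is handled correctly, and your verification that the ball condition on $x+tv$ forces $t\in B(0,\gamma_i(x,\bar v(\ell),0))$ goes through in both cases of the case split.
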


We shall call a subgroup $\Lambda$ in $\cD$ satisfying the first
condition in Corollary \ref{localconic} adapted to $(X, x)$, and if moreover $\Lambda$ is adapted to $(X,x)$ for all $x\in K^n$, then we call $\Lambda$ adapted to $X$.

\subsection{Tangent cones}\label{sec:tangentc}
Now, if $X$ is a definable subset of $K^n$,  $x$ a point of $K^n$, and $\Lambda$ in $\cD$,
we define
the tangent $\Lambda$-cone to $X$ at $x$ as
\begin{equation*}
\begin{split}
C_x^{\Lambda} (X)
:=
\Bigl\{u \in K^n
; (\forall i > 0)
(\exists z \in X)
(\exists \lambda \in \Lambda) \, \mbox{ such that } \\
\ord (z - x)>i \,  \mbox{ and } \, \ord (\lambda (z - x) - u)  >i
\Bigr\}.
\end{split}
\end{equation*}
By construction $C_x^{\Lambda} (X)$ is a
closed,
definable, $\Lambda$-cone, and, for any $n\in \NN$,
$C_x^{\Lambda} (X) = C_x^{\Lambda} (X\cap B(x,n))$.
Furthermore,
for definable $X,Y\subset K^n$ and for $\Lambda' \subset \Lambda$ in $\cD$, one has
$$C_x^{\Lambda} (X\cup Y)=
C_x^{\Lambda} (X)\cup C_x^{\Lambda} (Y),$$
$$ C_x^{\Lambda}(\overline {X})
=C_x^{\Lambda}(X),$$
$$
C_x^{\Lambda'} (X) \subset C_x^{\Lambda} (X).
$$
Although the previous inclusion might be strict, $\dim(C_x^\Lambda(X))$ does not depend on $\Lambda\in \cal D$ by Lemma \ref{dim}.
We comment some more on the previous inclusion in the following remarks.

\begin{remark}\label{remarque sur les cones locaux}
Let  $X$ be a local $\Lambda$-cone with origin $x$ in $K^n$. Thus, there exist $n$ in $\NN$ and $C$ a $\Lambda$-cone with origin $x$
such that
$X=C\cap B(x,n)$. In this case for any $\Lambda' \subset \Lambda\in
\cD$, one has
$$C_x^{\Lambda'} (X) =C_x^{\Lambda} (X)  \ (=C, \hbox{ when C is closed).} $$
 Indeed,
since $X=C\cap B(0,n)$,
we have $C_x^{\Lambda'} (X)=C_x^{\Lambda'} (C)$. But
we also have $C_x^{\Lambda'} (C)=C_x^{\Lambda} (C)$ ($=C$, when
$C$ is closed).

We indicate why $C_x^{\Lambda} (C)\subset C_x^{\Lambda'} (C)$.
Assuming $x=0$ for simplicity,
let $u\in C_0^{\Lambda} (C)$ and $i\in \NN$, $z\in C$, $\lambda
\in \Lambda$, with $\ord(z)>i$ and $\ord(\lambda z-u)>i$.
We have $z\in C$ and thus $\lambda z\in C$. Now let
$\lambda'\in \Lambda'$ small enough to ensure that $\ord(
\lambda'\lambda z)>i$.  Then denoting $z'=\lambda'\lambda z$, one has
$z' \in C$. From
$\ord(z')>i$ and $\ord((1/\lambda')z'-u)>i$, we see  that
$u \in C_0^{\Lambda'} (C)$.

Finally we indicate why $C_x^{\Lambda} (C)=C$, when $C$ is closed.
As $C$ is stable by the
$\Lambda$-action, the inclusion $C\subset  C_x^{\Lambda} (C)$ is obvious.
On the other hand, assuming again $x=0$, if $u\in C_x^{\Lambda} (C)$,
for all $i\in \NN$, there exist $z\in C$ and $\lambda\in \Lambda$
such that $\ord(z)>i$ and $\ord(\lambda z-u)>i$. We can then construct
a sequence of points $\lambda z\in C$ with limit $u$, this shows that
$u\in C$, since $C$ is closed.
\end{remark}

\begin{remark}
When
$X$ is a definable subset of $K$ and  $x$   a point of
$K$, by Lemma \ref{coneone}, $X$ is a local $\Lambda$-cone
at $x$ with origin $x$, for some $\Lambda\in \cD$. By the above remark,
for every $\Lambda' \subset \Lambda$, and still in the one dimensional case that $X\subset K$, one has $C_x^{\Lambda'} (X) =
C_x^{\Lambda} (X)$.

We cannot expect in general that for $X$ a definable subset of  $K^n$, $n>1$,
$X$ is a local $\Lambda$-cone for some $\Lambda\in \cD$, but one
 may at least ask, as it is the case for $n=1$, whether
the stability property:
``{\sl there exists $\Lambda\in \cD$ such that for any $\Lambda'\in \cD$,
$\Lambda'\subset \Lambda$, one has  $C_x^{\Lambda'} (X) =
C_x^{\Lambda} (X)$}" still holds for $n>1$. The answer to that question is yes,
as we shall show in
 Theorem \ref{distinguished cone}.
\end{remark}

\subsection{More on $\varepsilon$-analytic functions}

The following is the $p$-adic analogue of Proposition 1.7 of
\cite{KR}.

\begin{prop}\label{prop1.7KR}
Let $f:U\to K^{n-d}$ be a definable $\varepsilon$-analytic function on a
nonempty open subset $U$ of $K^{d}$, $0\leq d\leq n$. Let $\Gamma$
be the graph of $f$ and let $z$ be in $\overline\Gamma$. Then, for
any group $\Lambda$  in $\cD$
$$
C_z^{\Lambda} (\Gamma) \subset \{(x,y)\in K^{d}\times K^{n-d}\mid
|y|\leq \varepsilon |x|\}.
$$
\end{prop}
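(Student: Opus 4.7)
The plan is to pick $u=(x,y)\in C_z^{\Lambda}(\Gamma)$, write $z=(z_1,z_2)\in K^d\times K^{n-d}$, and prove $|y|\leq\varepsilon|x|$. By the very definition of $C_z^{\Lambda}(\Gamma)$ I extract witnesses: for each $i\geq 1$ there are $w_i=(a_i,f(a_i))\in\Gamma$ and $\lambda_i\in\Lambda$ such that $w_i\to z$ and $\lambda_i(w_i-z)\to u$. Thus $a_i\to z_1$, $f(a_i)\to z_2$, $\lambda_i(a_i-z_1)\to x$ and $\lambda_i(f(a_i)-z_2)\to y$, and the whole argument reduces to controlling $|f(a_i)-z_2|$ by $\varepsilon|a_i-z_1|$ for $i$ large.

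The tool for this control is the local $\varepsilon$-Lipschitz property of $f$ provided by Lemma \ref{ana-lip}. When $z_1\in U$ the argument is immediate: all $a_i$ eventually lie in a common Lipschitz neighborhood of $z_1$, and the Lipschitz estimate between $a_i$ and $a_j$ combined with the continuity of $f$ at $z_1$ (letting $j\to\infty$) yields the bound. The delicate case is $z_1\in\overline U\setminus U$, where no Lipschitz neighborhood is anchored at $z_1$. I intend to bypass this via the non-archimedean ball identity: if $B(a_i,N_i)\subset U$ is a ball on which $f$ is $\varepsilon$-Lipschitz and $\ord(a_i-z_1)\geq N_i$ (which holds for $i$ large), then $B(a_i,N_i)=B(z_1,N_i)$, so $a_j$ lies in this ball for all $j$ sufficiently large. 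Consequently $|f(a_i)-f(a_j)|\leq\varepsilon|a_i-a_j|=\varepsilon|a_i-z_1|$, the last equality because $|a_j-z_1|<|a_i-z_1|$ for $j$ large. The ultrametric inequality $|f(a_i)-z_2|\leq\max(|f(a_i)-f(a_j)|,|f(a_j)-z_2|)$ then gives $|f(a_i)-z_2|\leq\varepsilon|a_i-z_1|$ upon letting $j\to\infty$.

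To conclude, I multiply by $|\lambda_i|$ to obtain $|\lambda_i(f(a_i)-z_2)|\leq\varepsilon|\lambda_i(a_i-z_1)|$ for $i$ large. If $y=0$ the desired inequality is trivial; otherwise, for $i$ large, the left-hand side equals $|y|$, whereas the right-hand side equals $\varepsilon|x|$ when $x\neq 0$ and tends to $0$ when $x=0$, the latter forcing $|y|=0$ and contradicting $y\neq 0$. Hence $|y|\leq\varepsilon|x|$ in every case.

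The main obstacle, as indicated, is the boundary case $z_1\in\overline U\setminus U$; its resolution through the non-archimedean equality of balls is the only non-routine ingredient and is the $p$-adic counterpart of the continuity argument used in the real proof of Proposition 1.7 in \cite{KR}.
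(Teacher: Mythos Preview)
There is a genuine gap in the boundary case $z_1\in\overline U\setminus U$, and it cannot be repaired along the lines you propose.

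Your key step reads: ``if $B(a_i,N_i)\subset U$ is a ball on which $f$ is $\varepsilon$-Lipschitz and $\ord(a_i-z_1)\geq N_i$ (which holds for $i$ large), then $B(a_i,N_i)=B(z_1,N_i)$''. But the hypothesis $B(a_i,N_i)\subset U$ together with $z_1\notin U$ forces $z_1\notin B(a_i,N_i)$, i.e.\ $\ord(a_i-z_1)<N_i$---the \emph{opposite} of what you assert. In other words, when $z_1$ lies on the boundary of $U$ the Lipschitz ball around $a_i$ is always strictly smaller than the distance $|a_i-z_1|$, so for $j$ large one has $|a_j-a_i|=|a_i-z_1|>q^{-N_i}$ and $a_j$ never enters $B(a_i,N_i)$. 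The comparison $|f(a_i)-f(a_j)|\leq\varepsilon|a_i-a_j|$ is therefore unavailable, and the estimate $|f(a_i)-z_2|\leq\varepsilon|a_i-z_1|$ remains unproved.

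The difficulty is intrinsic: your argument uses only the local $\varepsilon$-Lipschitz property supplied by Lemma~\ref{ana-lip}, never the definability of $f$. But that property alone is not enough. For instance, on $U=\pi_K R\setminus\{0\}\subset K$ the function $f(x)=\pi_K^{\lfloor \ord(x)/2\rfloor}$ is locally constant, hence locally $\varepsilon$-Lipschitz for every $\varepsilon>0$, and $(0,0)\in\overline{\Gamma(f)}$; yet for $a_i=\pi_K^{\,i}$ one has $|f(a_i)|/|a_i|=q^{\lceil i/2\rceil}\to\infty$, and the tangent $K^\times$-cone of $\Gamma(f)$ at $(0,0)$ contains $\{0\}\times K$, violating the conclusion. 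This $f$ is not definable, so the proposition is not contradicted, but it shows that any correct proof must exploit definability. The paper does so via the Curve Selection Lemma~\ref{curve}: assuming a point $(x_0,y_0)$ in the cone satisfies $|y_0|>|x_0|+\delta$, one intersects $\Gamma$ with $\{|y|>|x|+\delta\}$, selects an analytic arc through $z$, and derives a contradiction from the $p$-adic Whitney Lemma~\ref{lemWhitney} by comparing the limits of secant and tangent directions (the latter constrained by $\varepsilon$-analyticity, the former by the choice of $\delta$).
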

\begin{proof}
We may suppose that $z=0$. 
Choose $\Lambda$  in $\cD$. Since
$$
C_0^{\Lambda} (\Gamma) \subset C_0^{K^\times} (\Gamma),
$$
by definition of $C_0^{\Lambda}(\cdot)$, we may assume that
$\Lambda=K^\times$. We may also suppose that $\varepsilon=1$, after
rescaling. Suppose by contradiction that there is $(x_0,y_0)$ in
$C_0^{K^\times} (\Gamma)$ with $|y_0|>|x_0|+\delta$ for some $\delta>0$.
Let $\Gamma'$ be the intersection of $\Gamma$ with the open
subset $\{(x,y)\in K^{d+(n-d)}\mid |y|>|x|+\delta\}$. By our assumption on
$(x_0,y_0)$ and by the definition of $C_0^{K^\times} (\Gamma)$, the
set $\Gamma'$ is nonempty and $0$ lies in $\overline\Gamma'\setminus
\Gamma'$. Apply the Curve Selection Lemma \ref{curve} to the set
$\Gamma'$ and the point $0$. This way we find power series $g_i$
over $K$ in one variable for $i=1,\ldots,n$, converging on $R$, such
that $g(0)=0$ and $g(R\setminus \{0\})\subset
\Gamma'\setminus\{0\}$. But this is in contradiction with Lemma
\ref{lemWhitney}. Indeed, the tangent line $\ell'_r$ at $r\not=0$ is
of the form $g(r) + K\cdot t_r$ with some $t_r\in K^n$ satisfying
$|y(t_r)|\leq |x(t_r)|$ by $\varepsilon$-analyticity of $f$ and the
chain rule for differentiation, where
$x(t_r)=(t_{r1},\ldots,t_{rd})$ and
$y(t_r)=(t_{rd+1},\ldots,t_{rn})$. Hence, the limit $\ell'_0$ of the
$\ell'_r$ for $0\not=r\ \to 0$ is of the same form $g(0) + K\cdot t_0$
for some $t_0\in K^n$ with $|y(t_0)|\leq |x(t_0)|$. On the other
hand, the line $\ell_r$ for $r\not=0$ connecting $g(0)$ with $g(r)$
is of the form $g(r) + K\cdot u_r$ with some $u_r\in K^n$ satisfying
$|y(u_r)| > |x(u_r)|+\delta$. Hence, the limit line $\ell_0$ of
the $\ell_r$ for $r\to 0$ has the same description, which
contradicts Lemma \ref{lemWhitney} and the description of $\ell'_0$.

\end{proof}

\begin{cor}\label{cor1.8KR}
With the data and the notation of Proposition \ref{prop1.7KR}, let $x$ be in
$\overline U$. Then there are only finitely many points in
$\overline\Gamma$ which project to $x$ under the coordinate projection
$K^{d}\times K^{n-d}\to K^d$.
\end{cor}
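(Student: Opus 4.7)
The plan is to argue by contradiction, combining the Curve Selection Lemma with Proposition \ref{prop1.7KR}. Let $F_x := \{y \in K^{n-d} : (x,y) \in \overline{\Gamma}\}$; since $\overline{\Gamma}$ is closed and definable, so is $F_x$, and the goal is to prove $F_x$ is finite.

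Suppose, for contradiction, that $F_x$ is infinite. A discrete definable subset of $K^{n-d}$ has dimension zero and is therefore finite, so $F_x$ must contain some self-accumulation point $y_0 \in F_x$. By the Curve Selection Lemma \ref{curve} applied to $F_x \setminus \{y_0\}$ at $y_0$, there is a nonconstant definable map $\alpha : R \to K^{n-d}$ whose components are power series over $K$ converging on $R$, with $\alpha(0) = y_0$ and $\alpha(R \setminus \{0\}) \subset F_x$. Expanding at $0$, I can write $\alpha(r) - y_0 = s r^k + r^{k+1}\beta(r)$ for some integer $k \geq 1$, nonzero $s \in K^{n-d}$, and analytic $\beta$.

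Next I exhibit a nonzero vector of the form $(0, s)$ in a tangent cone $C_{(x, y_0)}^{\Lambda}(\Gamma)$. Fix any $\Lambda \in \cD$; by Lemma \ref{cones1}, pick $M \geq 1$ with $P_M \subset \Lambda$. For $j \in \NN$ set $r_j := \pi_K^{jM}$ and $\lambda_j := r_j^{-k} = (\pi_K^{-jk})^M \in P_M \subset \Lambda$. Since $(x, \alpha(r_j)) \in \overline{\Gamma}$, I select $z_j = (a_j, f(a_j)) \in \Gamma$ with $\ord(z_j - (x, \alpha(r_j))) > jMk + j$. The identity $\lambda_j \cdot s r_j^k = s$, together with the ultrametric estimates showing that $\lambda_j(z_j - (x, \alpha(r_j))) \to 0$ and $\lambda_j r_j^{k+1}\beta(r_j) \to 0$ as $j \to \infty$, yield $z_j \to (x, y_0)$ and $\lambda_j (z_j - (x, y_0)) \to (0, s)$. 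Hence $(0, s) \in C_{(x, y_0)}^{\Lambda}(\Gamma)$.

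This contradicts Proposition \ref{prop1.7KR} applied at $(x, y_0) \in \overline{\Gamma}$: it forces $(0, s) \in \{(u, v) \in K^d \times K^{n-d} : |v| \leq \varepsilon|u|\}$, i.e.\ $|s| \leq \varepsilon \cdot 0 = 0$, which is absurd. Therefore $F_x$ is discrete, hence of dimension zero in $K^{n-d}$, hence finite. The main obstacle is the coordinated choice in the third paragraph: arranging rescalings $\lambda_j$ lying in the prescribed subgroup $\Lambda$ while ensuring that $\lambda_j(z_j - (x, y_0))$ converges to the nonzero limit $(0, s)$ with vanishing first coordinate; this is handled by the fact that every $\Lambda \in \cD$ contains some power subgroup $P_M$ in which one can pick $\lambda_j = r_j^{-k}$ along the sequence $r_j = \pi_K^{jM}$.
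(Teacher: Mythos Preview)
Your proof is correct and follows the same overall strategy as the paper: assume the fiber is infinite, find a nonzero ``vertical'' vector in some tangent cone $C_{(x,y_0)}^{\Lambda}(\Gamma)$, and contradict Proposition~\ref{prop1.7KR}. The difference lies in how the vertical tangent vector is produced. The paper argues abstractly: an infinite fiber means $\overline\Gamma\cap(\{x\}\times K^{n-d})$ has positive dimension, hence at some point $z$ its tangent $\Lambda$-cone is positive-dimensional, and since that cone sits inside both $\{0\}\times K^{n-d}$ and $C_z^{\Lambda}(\Gamma)$ one gets the contradiction immediately. You instead construct the tangent vector by hand, applying Curve Selection inside $F_x$, extracting the leading term $sr^k$, and then carefully choosing approximants $z_j\in\Gamma$ and rescalings $\lambda_j\in P_M\subset\Lambda$ so that $\lambda_j(z_j-(x,y_0))\to(0,s)$. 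Your route is more explicit and self-contained (it does not rely on the unstated fact that a positive-dimensional definable set has a point with positive-dimensional tangent cone), at the cost of being longer; the paper's version is terse but leans on that general principle.
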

\begin{proof}
Suppose by contradiction that there are infinitely many such points. Then
the dimension of $\overline\Gamma\cap (\{x\}\times K^{n-d})$ is
$>0$. Thus, there exists $z\in \overline\Gamma$ such that
$C_z^\Lambda( \overline\Gamma\cap (\{x\}\times K^{n-d}) )$ is
of dimension $>0$, which is in contradiction to Proposition \ref{prop1.7KR}.
\end{proof}

\subsection{Deformation to the tangent cone}
Let
$X$ be a definable subset of $K^n$ and let $x$ be a point of $K^n$.
Fix a subgroup $\Lambda$ in $\cD$.
We consider the definable set
$\cD (X, x, \Lambda)$
in $K^n \times \Lambda$ defined as
$$
\cD (X, x, \Lambda)
:=
\Bigl\{ (z, \lambda)
;
x + \lambda z \in X
\Bigr\}
$$
and its closure
$$
\overline{\cD (X, x, \Lambda)}
$$
in
$K^n \times K$. In $\overline{\cD (X, x, \Lambda)}$ one finds back the cone $C_x^{\Lambda} (X)$. Indeed,
one has $ \overline{\cD (X, x, \Lambda)} \cap (K^n\times \{0\}) = C_x^{\Lambda} (X) \times \{0\}$, which we identify with $C_x^{\Lambda} (X)$.

\begin{lem}\label{dim}
If $X$ is of dimension $d$, then $\cD (X, x, \Lambda)$ is of
dimension $d+1$ and $C_x^{\Lambda} (X)$ is of dimension $\leq d$. Moreover, $\dim(C_x^\Lambda(X))$ does not depend on the choice of $\Lambda\in \cal D$.
\end{lem}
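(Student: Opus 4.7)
The plan is to deduce the three assertions in turn, taking as given the identification
\[
\overline{\cD(X, x, \Lambda)} \cap (K^n \times \{0\}) = C_x^{\Lambda}(X) \times \{0\}
\]
recorded just before the statement.

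\textbf{Dimension of $\cD(X,x,\Lambda)$.} I would exhibit the definable bijection
\[
\phi : \cD(X, x, \Lambda) \longrightarrow X \times \Lambda, \qquad (z, \lambda) \longmapsto (x + \lambda z, \lambda),
\]
whose inverse $(y, \lambda) \mapsto (\lambda^{-1}(y - x), \lambda)$ is also definable. Since $\Lambda$ is an $\cL$-definable open subset of $K^\times$ (Lemma \ref{cones1}) of dimension $1$, this gives $\dim \cD(X, x, \Lambda) = \dim X + \dim \Lambda = d + 1$.

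\textbf{Dimension of $C_x^{\Lambda}(X)$.} Because $\Lambda \subset K^\times$, the set $C_x^{\Lambda}(X) \times \{0\}$ is disjoint from $\cD(X, x, \Lambda)$, so the identification above places it inside $\overline{\cD(X, x, \Lambda)} \setminus \cD(X, x, \Lambda)$. Invoking the standard frontier-dimension inequality $\dim(\overline{Y} \setminus Y) < \dim Y$ for $\cL$-definable sets (semi-algebraic \cite{sd}, subanalytic \cite{DvdD}), I obtain
\[
\dim C_x^{\Lambda}(X) \le \dim\bigl(\overline{\cD(X, x, \Lambda)} \setminus \cD(X, x, \Lambda)\bigr) < d+1,
\]
whence $\dim C_x^{\Lambda}(X) \le d$.

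\textbf{Independence of $\Lambda$.} Given $\Lambda, \Lambda' \in \cD$, the subgroup $\Lambda \cap \Lambda'$ again belongs to $\cD$, so it suffices to prove $\dim C_x^{\Lambda}(X) = \dim C_x^{\Lambda''}(X)$ whenever $\Lambda'' \subset \Lambda$ with both in $\cD$. Since $\Lambda''$ has finite index in $\Lambda$, I write $\Lambda = \bigsqcup_{i=1}^m a_i \Lambda''$ with $a_i \in \Lambda$. For $u \in C_x^{\Lambda}(X)$, choose sequences $z_j \in X$, $\lambda_j \in \Lambda$ with $\ord(z_j - x) \to \infty$ and $\ord(\lambda_j(z_j - x) - u) \to \infty$; by pigeonhole some coset $a_{i_0}\Lambda''$ contains infinitely many $\lambda_j$, and writing $\lambda_j = a_{i_0}\mu_j$ with $\mu_j \in \Lambda''$ gives $\mu_j(z_j - x) \to a_{i_0}^{-1} u$, so $a_{i_0}^{-1} u \in C_x^{\Lambda''}(X)$. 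Hence
\[
C_x^{\Lambda}(X) = \bigcup_{i=1}^m a_i \cdot C_x^{\Lambda''}(X),
\]
a finite union of homothetic images of $C_x^{\Lambda''}(X)$ of equal dimension, so $\dim C_x^{\Lambda}(X) = \dim C_x^{\Lambda''}(X)$. The only non-elementary input is the frontier-dimension inequality in the second step; the rest is bookkeeping with the definition of the tangent cone and finite-index cosets, so I anticipate no real obstacle.
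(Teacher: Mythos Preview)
Your proof is correct and follows essentially the same approach as the paper. The only cosmetic difference is in the first step: the paper uses the surjection $p:\cD(X,x,\Lambda)\to X$, $(z,\lambda)\mapsto x+\lambda z$, and notes that its fibers are one-dimensional, whereas you keep track of $\lambda$ to obtain an explicit definable bijection onto $X\times\Lambda$; the remaining two steps (frontier-dimension inequality and the finite-coset decomposition $C_x^{\Lambda}(X)=\bigcup_i a_i\,C_x^{\Lambda''}(X)$) are exactly the paper's arguments.
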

\begin{proof}
We may suppose that $X$ is nonempty. Consider the projection
\begin{equation*}
p :
\begin{cases}
\cD (X, x, \Lambda)
\longrightarrow X  \\
(z,\lambda)\longmapsto x+\lambda z.
\end{cases}\end{equation*}
Since $p$ is surjective and has fibers of dimension $1$, we get that $\cD (X, x, \Lambda)$ is of dimension $d+1$.
 The cone $C_x^{\Lambda} (X)$ is contained in
$\{0\}\cup(\overline{\cD (X, x, \Lambda)}\setminus \cD (X, x,
\Lambda))$. Hence, $C_x^{\Lambda} (X)$ is of dimension $\leq d$. The last statement follows from
$$
C_x^{\Lambda} (X) = \bigcup_i \mu_i C_x^{\Lambda'} (X),
$$
whenever $\Lambda'\subset \Lambda$ is in $\cD$ and when one writes $\Lambda$ as a finite union of cosets $\bigcup_i \mu_i \Lambda'$ of $\Lambda'$ in $\Lambda$.
\end{proof}

\subsection{Multiplicities on the tangent cones}

Let $X$ be a definable subset of $K^n$ of dimension $d$, let $x$
be a point of $K^n$, and let $\Lambda$ be in $\cD$.  To each point $z$ on the cone $C_x^{\Lambda} (X)$ we will associate a rational number $SC_x^{\Lambda} (X)(z)$, called the multiplicity of $(X,x)$ at $z$ with respect to $\Lambda$.


Define the function
$$SC_x^{\Lambda} (X) : C_x^{\Lambda} (X) \to \QQ $$
 as the function sending $z$ to
$$
[K^{\times} : \Lambda] \, \Theta_{d + 1}(\11_{\cD (X, x, \Lambda)})(z,0),
$$
with $[K^{\times} : \Lambda]$ the index of
$\Lambda$ in $K^\times$, and with $\11_{\cD (X, x, \Lambda)}$ the characteristic function of $\cD (X, x, \Lambda)$. The function $SC_x^{\Lambda} (X)$ is called the specialization of $X$ at $x$ with respect to $\Lambda$.


The following lemma gives an  indication that $SC_x^{\Lambda} (X)$ captures much local information of $(X,x)$; this principle will find a strong and precise form in Theorem \ref{mt} below.
\begin{lem}\label{support}
The function $SC_x^{\Lambda} (X)$ lies in $\cC( C_x^{\Lambda} (X))$.
Moreover
$$
\Theta_{d + 1}(\11_{\cD (X, x, \Lambda)})(z,0)=0
$$
for $z$ outside $C_x^{\Lambda} (X)$.
\end{lem}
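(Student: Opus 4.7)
Both assertions will follow almost directly from the machinery already set up. The strategy is to first establish the second statement (a support-type vanishing), then use it together with the general properties of $\Theta_{d+1}$ to deduce the first statement.

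For the second assertion, I start from the observation made just after Definition \ref{pTheta} that $\Theta_d(\varphi)(y)$ vanishes whenever $y$ lies outside the closure of the support of $\varphi$. Applied with $\varphi=\11_{\cD(X,x,\Lambda)}$, this reduces the claim to showing that if $z\notin C_x^{\Lambda}(X)$, then $(z,0)\notin \overline{\cD(X,x,\Lambda)}$. But the identification
\[
\overline{\cD(X,x,\Lambda)}\cap (K^n\times\{0\})=C_x^{\Lambda}(X)\times\{0\}
\]
was spelled out in Section~3.4; it follows immediately from unwinding the definitions of $\cD(X,x,\Lambda)$ and $C_x^{\Lambda}(X)$ (any sequence of points $(z_i,\lambda_i)\in\cD(X,x,\Lambda)$ with $\lambda_i\to 0$ and $z_i\to z$ produces witnesses $\lambda_i(x+\lambda_iz_i-x)=\lambda_i^2 z_i$ and one checks convergence to $z$ against the definition). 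Hence $z\notin C_x^{\Lambda}(X)$ forces $(z,0)$ to sit outside $\overline{\cD(X,x,\Lambda)}$, and the vanishing follows.

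For the first assertion, Lemma~\ref{dim} gives $\dim\cD(X,x,\Lambda)=d+1$, and the characteristic function $\11_{\cD(X,x,\Lambda)}$ is bounded (by $1$) and lies in $\cC(K^n\times K)$. Consequently, by Lemma~\ref{2.3.1} and Proposition~\ref{lem:pos} (these are precisely the tools used just after equation~(\ref{Dd}) to state that $\Theta_d$ of a bounded constructible function is again constructible), the function
\[
(z,\lambda)\longmapsto \Theta_{d+1}(\11_{\cD(X,x,\Lambda)})(z,\lambda)
\]
lies in $\cC(K^n\times K)$. The map $z\mapsto (z,0)$ identifies $C_x^{\Lambda}(X)$ with the definable subset $C_x^{\Lambda}(X)\times\{0\}$ of $K^n\times K$, so restricting along this definable closed immersion and multiplying by the constant $[K^{\times}:\Lambda]\in\QQ$ yields an element of $\cC(C_x^{\Lambda}(X))$, which is exactly $SC_x^{\Lambda}(X)$.

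I do not expect a real obstacle here: the content of the lemma is essentially bookkeeping against the framework of bounded constructible functions and their densities, and the only genuinely geometric input is the description of $\overline{\cD(X,x,\Lambda)}\cap(K^n\times\{0\})$, which is already recorded in the definitions of Section~3.4.
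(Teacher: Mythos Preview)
Your proposal is correct and follows essentially the same route as the paper: both use that $\Theta_{d+1}(\11_{\cD(X,x,\Lambda)})\in\cC(K^n\times K)$ and then restrict to the definable subset $C_x^{\Lambda}(X)\times\{0\}$ for the first assertion, and both deduce the vanishing from the support of $\Theta_{d+1}$ being contained in the closure of $\cD(X,x,\Lambda)$ together with the identification of $\overline{\cD(X,x,\Lambda)}\cap(K^n\times\{0\})$ with $C_x^{\Lambda}(X)\times\{0\}$.

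One small slip worth fixing: in your parenthetical verification of that identification you write the witness as $\lambda_i(x+\lambda_i z_i - x)=\lambda_i^2 z_i$, but the correct scaling is by $\lambda_i^{-1}\in\Lambda$, giving $\lambda_i^{-1}(x+\lambda_i z_i - x)=z_i\to z$; with $\lambda_i$ you would get $\lambda_i^2 z_i\to 0$, not $z$. This does not affect your argument since you are anyway citing the identification already recorded in the deformation subsection.
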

\begin{proof} The function $\11_{\cD (X, x, \Lambda)}$ is in $\cC(K^n \times K)$ since $\cD (X, x, \Lambda)$ is a definable set, and thus, also $\Theta_{d + 1}(\11_{\cD (X, x, \Lambda)})$ lies in $\cC(K^n \times K)$.
For definable sets $A\subset B$, the restriction of a function in $\cC(B)$ to $A$ automatically lies in $\cC(A)$, hence,  $SC_x^{\Lambda} (X)$ lies in $\cC( C_x^{\Lambda} (X))$. The second statement follows from the fact that the support of $\Theta_{d + 1}(\11_{\cD (X, x, \Lambda)})$ is contained in the closure of $\cD (X, x, \Lambda)$, which is contained in $(K^n\times K^\times) \cup  (C_x^{\Lambda} (X) \times \{0\})$.
\end{proof}

\medskip

More generally, if $\varphi$ is a function in $\cC (X)$ which is
bounded near $x$, we define the specialization $\nu_x^{\Lambda}
(\varphi)$ of $\varphi$ at $x$ with respect to $\Lambda$ in the following way.
First define a function $\psi$ on $K^n \times K$ by $\psi (z, \lambda) := \varphi (x + \lambda
z)$ on  $\cD (X, x, \Lambda)$ and by zero elsewhere. Then one defines the function
$$\nu_x^{\Lambda} (\varphi):C_x^{\Lambda} (X) \to \QQ$$
as the function sending $z$ to
$[K^{\times} : \Lambda] \, \Theta_{d + 1}(\psi)(z,0)$. Note that, similarly as in Lemma \ref{support},  $\nu_x^{\Lambda} (\varphi)$ lies in $\cC
(C_x^{\Lambda} (X))$ and that  $\Theta_{d + 1}(\psi)(z,0)=0$ for $z$ outside $C_x^{\Lambda} (X)$.
We recover $SC_x^{\Lambda} (X)$ since $\nu_x^{\Lambda} (\11_X)= SC_x^{\Lambda} (X) $.

\medskip
The following result, which will be proved in
section \ref{secmt},
states that the local density can be computed on the tangent cone with multiplicities, for $\Lambda$ small enough.

\begin{theorem}\label{mt} Let
$X$ be a definable subset of $K^n$ and let $x$ be a point of $K^n$.
For $\Lambda$ small enough
$$
\Theta_d (X) (x)
=
\Theta_d (SC_x^{\Lambda} (X)) (0).
$$
More generally,
let $\varphi$ be a function in
$\cC (X)$ which is bounded near $x$. For $\Lambda$ small enough
$$
\Theta_d (\varphi) (x)
=
\Theta_d (\nu_x^{\Lambda} (\varphi)) (0).
$$
\end{theorem}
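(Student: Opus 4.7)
The plan is to reduce, by translation, to $x = 0$ and, via Proposition \ref{bord}, replace $X$ by its closure. I would first concentrate on the indicator case $\varphi = \11_X$; the general case for a bounded $\varphi \in \cC(X)$ then follows by applying Lemma \ref{prop:descrip:simple} cellwise to rewrite $\varphi$ as a finite sum of simple products and using Proposition \ref{convmon} to pass the density through the resulting simple approximations. I would also fix $\Lambda$ adapted to $(X, 0)$ by Corollary \ref{localconic}, so that along each line through $0$ the set $X$ is already a local $\Lambda$-cone; this kind of adaptedness is what makes statements about multiplicities well-posed.

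To tackle the indicator case I would invoke Proposition \ref{1.4}, strengthened by the Lipschitz decomposition Theorem \ref{Lipschitz decomposition}, to write $X$, up to a definable subset of dimension strictly less than $d$, as a finite disjoint union $\bigsqcup_i \gamma_i(\Gamma_i)$ of graphs $\Gamma_i$ of definable $\varepsilon$-analytic maps $f_i : U_i \to K^{m-d}$, with $\gamma_i \in \GL_m(R)$ and $\varepsilon > 0$ a parameter I can take arbitrarily small. Since elements of $\GL_m(R)$ preserve balls, the measure $\mu_d$, the formation of tangent cones and the specialisation $SC_0^{\Lambda}$, it is enough to establish the theorem for one such $\Gamma = \Gamma(f)$ with $f(0) = 0$. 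Proposition \ref{prop1.7KR} then gives
\[
C_0^{\Lambda}(\Gamma) \subset \bigl\{(u,v) \in K^d \times K^{m-d} : |v| \leq \varepsilon |u|\bigr\},
\]
and for $\varepsilon < 1$ the ultrametric inequality makes the coordinate projection $p : K^m \to K^d$ an isometry both on $\Gamma \cap B(0,n)$ (for $n$ large) and on all of $C_0^{\Lambda}(\Gamma)$. This reduces the computations of $\Theta_d(\Gamma)(0)$ and $\Theta_d(SC_0^{\Lambda}(\Gamma))(0)$ to a $d$-dimensional computation in $K^d$: the cone $p(C_0^{\Lambda}(\Gamma))$ is a full-dimensional $\Lambda$-cone, and the deformation $\cD(\Gamma, 0, \Lambda)$ identifies (via the isometric projection) with the graph of $(u, \lambda) \mapsto f(\lambda u)/\lambda$ over an open part of $K^d \times \Lambda$, whose $(d+1)$-dimensional local density at $(z,0)$ equals $1/[K^\times : \Lambda]$. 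The prefactor $[K^\times : \Lambda]$ in the definition of $SC_0^{\Lambda}$ converts this to $1$, so $SC_0^{\Lambda}(\Gamma) \equiv 1$ on $C_0^{\Lambda}(\Gamma)$ and both sides of the theorem come out equal.

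The main obstacle I anticipate is the bookkeeping in gluing the pieces $\Gamma_i$ back together. If several $\Gamma_i$ share tangent directions, the multiplicity $SC_0^{\Lambda}(X)(z)$ at $z \in C_0^{\Lambda}(X)$ must exactly count the indices $i$ with $z \in C_0^{\Lambda}(\Gamma_i)$. To make this rigorous I would shrink $\Lambda$ further so that Corollary \ref{corcone1} applied along each projected line stabilises the intersections $\Gamma_i \cap B(0,n)$ into genuine $\Lambda$-cones (the point is that, by Lemma \ref{coneone} and compactness, only finitely many moduli $n_i$ appear), and then appeal to additivity of $\Theta_d$ on disjoint definable pieces. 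Combined with the single-graph case and the vanishing contribution from the dimension $< d$ remainder (a consequence of Proposition \ref{bord} and Lemma \ref{support}), this yields
\[
\Theta_d(X)(0) = \sum_i \Theta_d(\Gamma_i)(0) = \sum_i \Theta_d(SC_0^{\Lambda}(\Gamma_i))(0) = \Theta_d(SC_0^{\Lambda}(X))(0),
\]
as required.
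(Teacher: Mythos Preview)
Your overall strategy---reduce to $\varphi = \11_X$, decompose $X$ into $\varepsilon$-analytic graphs via Proposition~\ref{1.4} and Theorem~\ref{Lipschitz decomposition}, handle a single graph by an isometric projection to $K^d$, and glue by additivity---is exactly the architecture of the paper's proof. The gluing you worry about is in fact automatic: $SC_0^\Lambda$ is by definition $[K^\times:\Lambda]\,\Theta_{d+1}(\11_{\cD(\cdot,0,\Lambda)})$, which is additive in $X$, so once each $SC_0^\Lambda(\Gamma_i)\equiv 1$ almost everywhere the multiplicity on $C_0^\Lambda(X)$ is forced to be the number of $i$ with $z\in C_0^\Lambda(\Gamma_i)$, with no further bookkeeping.

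There is, however, a genuine gap in the single-graph step. After showing $SC_0^\Lambda(\Gamma)\equiv 1$ and using the isometric projection, your argument produces
\[
\Theta_d(\Gamma)(0)=\Theta_d(U)(0)
\qquad\text{and}\qquad
\Theta_d(SC_0^\Lambda(\Gamma))(0)=\Theta_d\bigl(p(C_0^\Lambda(\Gamma))\bigr)(0)=\Theta_d\bigl(C_0^\Lambda(U)\bigr)(0),
\]
and you then assert ``both sides of the theorem come out equal''. But equating these requires
\[
\Theta_d(U)(0)=\Theta_d\bigl(C_0^\Lambda(U)\bigr)(0)
\]
for an \emph{open} definable $U\subset K^d$, and you have not established this. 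This is the full-dimensional case of the theorem itself, and it does not follow from anything you have set up; the paper proves it separately (its Section~\ref{opencase}) by a sandwiching argument: one introduces the decreasing family of $\Lambda$-cones $C_n(\Lambda)$ generated by $U\cap B(0,n)$, sets $W=\bigcap_n C_n(\Lambda)$, and uses Proposition~\ref{convmon} together with $C_0^\Lambda(U)=\bigcap_n\overline{C_n(\Lambda)}$ and Proposition~\ref{bord} to squeeze $\Theta_d(U)(0)$ between $\Theta_d(W)(0)$ and $\Theta_d(C_0^\Lambda(U))(0)$. The adaptedness of $\Lambda$ from Corollary~\ref{localconic} is what makes the reverse inequality (via the sets $W_n=\{w\in W:\alpha(\pi_0(w))\le n\}$) go through. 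Without this step your reduction to $K^d$ is circular.

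A second, smaller point: the claim that $p$ is an isometry on $\Gamma\cap B(0,n)$ for large $n$ needs $|f(z)|<|z|$ near $0$, and $\varepsilon$-analyticity with $\varepsilon<1$ only yields this \emph{locally} at interior points of $U$. Since $0$ is typically only in $\overline U$, one must either restrict to $U''=\{z\in U:|f(z)|<|z|\}$ and argue via curve selection and Proposition~\ref{prop1.7KR} that $U\setminus U''$ contributes nothing to the tangent cone, or else ensure the global Lipschitz constant from Theorem~\ref{Lipschitz decomposition} is below $1$ (which that theorem does not guarantee). The paper takes the $U''$ route inside Corollary~\ref{trivcorSC}.
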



\section{Existence of $(w_f)$-regular stratifications}\label{swf}

\subsection{}In his study of stability
of the topological type of mappings,
R. Thom introduced the regularity condition $(a_f)$
in \cite{Thom}, p. 274,
as a relative version of condition $(a)$ of Whitney. The existence of $(a_f)$-regular stratifications was proved in the complex analytic case by H. Hironaka in \cite{Hi}
(Corollary 1, Section 5) using resolution of singularities, under
the assumption ``sans \'eclatement" which is always satisfied for functions.
One can find proofs of the existence of $(a_f)$ stratifications in the
real subanalytic case in \cite{KR}, where the Puiseux Theorem
with parameters of Paw\l ucki (see \cite{Paw}) is used, and for o-minimal
structures on the field of real numbers in \cite{Loi1}.

The stronger condition $(w_f)$, the relative version of the
so-called
condition $(w)$ of Verdier (see \cite{Verdier}),
was studied in the complex setting, for instance,  in \cite{HeMeSa}.
In the real
subanalytic
setting, it has been proved that $(w_f)$ stratifications
exist
by K. Bekka in \cite{bekka}, K. Kurdyka and A. Parusi\'nski in \cite{KurPar}
using Puiseux Theorem with parameters, and finally by Ta L\^e Loi in
\cite{Loi2} for definable functions in some o-minimal structures
over the real field (the
o-minimal
 structure has to be polynomially bounded for the
existence of $(w_f)$-regular stratifications, but need not to be so for
the existence of $(a_f)$-regular stratifications).

\subsection{}\label{4.2}
Let us now  recall the definitions of $(w_f)$ and $(a_f)$-regular
stratifications.
Let
$X$ be a definable subset of $K^n$, and let
 $(X^j)_{j\in \{1, \cdots, k\}}$ be a finite,
definable and analytic stratification
of $X$
satisfying the so-called
frontier condition
$$X^i\cap \overline{X^j}\not=\emptyset \Longrightarrow X^i\subset
\overline{X^j},$$
where definable and analytic means that the strata $X^j$ are definable, $K$-analytic manifolds.
Let $S$ be a definable subset of $K$
and let $f:X\to S$ be a definable continuous mapping such that for any
$j\in \{1, \cdots, k\}$, $f_{\vert X^j}$ is
analytic and of constant rank
(being $0$ or $1$).
For $j\in \{1, \cdots, k\}$ and
$x\in X^j$, let us denote by $T_xX^j_f$ the tangent space at $x$
of the fiber $ f^{-1}_{\vert X^j}(f(x))$ of $f_{\vert X^j}$.
Then one says
that the pair of strata $(X^i,X^j)$  satisfies condition $(a_f)$
at a point $x_0\in X^i\subset \overline{X^j}$ if and only if
for any sequence $(x_r)_{r\in \NN \setminus\{0\} }$ of points of $X^j$ converging to
$x_0$ and such that the sequence
$( T_{x_r}X^j_f )_{r\in \NN \setminus\{0\} }$ converges in the appropriate Grassmann manifold,
one has
\begin{equation}\tag{$a_f$}
 \lim_{r\to \infty} \delta(T_{x_0}X^i_f,T_{x_r}X^j_f)= 0,
 \end{equation}
where $\delta(\cdot,\cdot) $ is a natural distance between linear subspaces of $K^n$ as defined below.
Further,
one says
that the pair $(X^i,X^j)$ of strata satisfies condition $(w_f)$ at $x_0$
if and only if
there exist a constant $C$ and a neighborhood ${\cal W}_{x_0}$ of $x_0$ in
$K^n$, such that for any $x\in {\cal W}_{x_0}\cap X^i$
 and any $y\in {\cal W}_{x_0}\cap X^j$, one has
\begin{equation}\tag{$w_f$}
  \delta(T_xX^i_f,T_{y}X^j_f)\le C\cdot \vert x-y\vert .
\end{equation}
In both definitions, $\delta(V,V')$ denotes the distance
between
two linear subspaces $V$ and $V'$ of $K^n$
such that $\dim(V)\le \dim(V')$,
and is defined by
$$\delta(V,V')=\sup_{v\in V, \vert v\vert =1}
\{\inf_{v'\in V', \vert v'\vert=1}\vert v-v' \vert \}=
\sup_{v\in V, \vert v\vert =1}\hbox{dist}(v,S^{V'}(0,1)),$$
with $S^{V'}(0,1)$ the unit sphere
around $0$
of $V'$.

\begin{remark}\label{angle}
We have $\delta(V,V')=0$ if and only if $V\subset V'$ and
for any $V''\subset V'$
such that $\dim(V)\le \dim(V'')$,
$\delta(V,V'')\ge \delta(V,V')$.
\end{remark}

One says that the stratification $(X^j)_{j\in \{1, \cdots, k\}}$
is $(a_f)$-regular, respectively $(w_f)$-regular
if any pair $(X^i,X^j)$ of strata is $(a_f)$-regular,
respectively $(w_f)$-regular at any point of $X^i$.
And finally one says that the stratification
$(X^j)_{j\in \{1, \cdots, k\}}$
is $(a)$-regular, respectively $(w)$-regular, if it is
$(a_f)$-regular, respectively $(w_f)$-regular, for $S$ a point in $K$.

One starts the proof of the existence of $w_f$-regular stratifications
with the key Lemma \ref{pli} (see \cite{Loi2}, Lemma 1.8 for
its real version).
But before stating this lemma, let us introduce as in
\cite{CCL} (Definition 3.9)
 the notion of jacobian
property for definable functions
and recall from \cite{CCL} that this
property is
in a sense
a generic one
(see Proposition 3.10 of \cite{CCL} or Proposition \ref{jacprop} below).
This will be used in the proof of Lemma \ref{pli}.

\begin{definition}
Let $F:B\to B'$ a definable function with $B,B'\subset K$.
 We say that $F$ has the jacobian property if the following conditions hold all together:
\vskip2mm
$\rm{(i)}$ $F$ is a bijection and $B,B'$ are balls,
\vskip1mm
$\rm{(ii)}$ $F$ is $C^1$ on $B$,
\vskip2mm
$\rm{(iii)}$ $\displaystyle
\ord \Bigl(\frac{\partial F}{\partial x}\Bigr):B\to \ZZ$ is constant (and finite) on
$B$,
\vskip2mm
$\rm{(iv)}$ for all $x,y\in B$ with $x\not=y$, one has
$$ \ord\Bigl(\frac{\partial F}{\partial x}\Bigr)+\ord(x-y)
=\ord(F(x)-F(y)).$$
\end{definition}

It is proved
in a much more general setting in \cite{CLip}, Theorem 6.3.7,
that the jacobian property is generic for
definable mappings, which in our setting gives the following statement.

\begin{prop}\label{jacprop}
Let $Y\subset K^m$ and $X\subset K\times Y$ be definable sets for some $m\in \NN$.
 Let $M:X\to K$ be definable.
 Then there exists a finite partition of $X$ into definable subsets
$X_k$ such that for each $y\in Y$, the restriction
$M(\cdot, y): x_1\mapsto M(x_1,y) $  of $M$ to $\{x_1\in K; (x_1,y)\in X_k\}$
is either injective or constant.

Let us then assume, for simplicity, that on $X$, $M(\cdot,y)$ is injective.
Then there exists a finite partition of $X$ into cells $A_k$ over $Y$
such that for each $y\in Y$ and each ball $B$
such that $B\times \{y\}$ is contained in $A_k$,
there
is a
(unique) ball $B'$ such that the map $M_{\vert B}:B\to B':  x_1\mapsto M(x_1,y)\in B'$
has the jacobian property.
\end{prop}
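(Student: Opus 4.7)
The plan is to reduce the statement to the general genericity theorem for the jacobian property proved in \cite{CLip} (Theorem 6.3.7), which is formulated in a framework covering both languages $\cL_{\mathrm{Mac}}$ and $\cL_{\mathrm{an}}$ used here. Granting that result, what remains is mostly a matter of extracting the two stated consequences via $p$-adic cell decomposition (Theorem \ref{thm:CellDecomp}) in the variable $x_1$ over the parameter space $Y$.

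For the first assertion (the injective/constant dichotomy), I would partition $X$ into cells $A_k$ over $Y$ with centers $c_k(y)$ and cosets $\lambda_k P_{n_k}$ adapted simultaneously to $M$ by Theorem \ref{thm:CellDecomp}, which yields on each cell
$$|M(x_1,y)| = |h_k(y)|\cdot|(x_1-c_k(y))^{a_k}\lambda_k^{-a_k}|^{1/n_k}.$$
Refining the partition using Lemma \ref{terms}, I may assume that on each piece the function $x_1\mapsto M(x_1,y)$ is given by an $\cL^{\ast}$-term, hence is analytic in $x_1$ (outside the support of the base-point conditions). On such a refined cell, analyticity together with a standard fiberwise application of cell decomposition in the one variable $x_1$ (with parameters $y$) forces $M(\cdot,y)$ restricted to the $y$-slice to be either locally constant or locally injective; partitioning further so that the valuation of $\partial M/\partial x_1$ is either identically $+\infty$ or constant and finite on each $y$-slice upgrades this to a global dichotomy on each slice.

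For the second assertion, I would start from the injective case and apply cell decomposition once more, now together with the genericity of the jacobian property from \cite{CLip}, Theorem 6.3.7. The point is that on a cell $A_k$ of type $(1)$ over $Y$, a $y$-slice ball $B\times\{y\}\subset A_k$ is exactly a ball $B\subset K$ on which $M(\cdot,y)$ is $C^1$ and analytic; the cell decomposition can be arranged so that $\ord(\partial M/\partial x_1)$ is constant on each such $B$. The ultrametric identity
$$\ord\!\Bigl(\tfrac{\partial M}{\partial x_1}\Bigr)+\ord(x_1-x_1')=\ord(M(x_1,y)-M(x_1',y))$$
then follows from the $p$-adic mean value estimate: on a ball small enough that the higher order Taylor coefficients of $M(\cdot,y)$ at the center of $B$ all have valuation strictly larger than the contribution of the linear term, the linear term dominates, which yields the identity. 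Combined with injectivity, this forces $M(B\times\{y\})$ to be a ball $B'$ and gives the jacobian property.

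The core technical obstacle is exactly the uniform control of the higher Taylor terms of $M(\cdot,y)$ on a cell, uniformly in the parameter $y\in Y$, so that the linear term dominates on each $y$-slice ball; this is precisely the content of \cite{CLip}, Theorem 6.3.7, and is the step I would not attempt to reprove, but only invoke. Once it is available the only remaining work is to combine it with the cell decomposition of $X$ over $Y$ in the standard way, which poses no additional difficulty.
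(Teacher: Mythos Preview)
The paper gives no proof of this proposition at all: it simply records that the statement is a specialisation to the present setting of \cite{CLip}, Theorem 6.3.7, and moves on. Since your proposal also takes \cite{CLip}, Theorem 6.3.7 as the main input and only adds a sketch of how to unpack it via cell decomposition, your approach is essentially the same as the paper's.

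One remark on the extra detail you supply for the first assertion. The step ``partitioning further so that the valuation of $\partial M/\partial x_1$ is either identically $+\infty$ or constant and finite on each $y$-slice upgrades this to a global dichotomy on each slice'' is not quite right as stated: a $y$-slice of a cell can be an annulus or a union of several balls, and constancy of $\ord(\partial M/\partial x_1)$ on such a set does not by itself force injectivity (two disjoint balls can map to overlapping images). The clean way to get the global injective/constant dichotomy is either to apply cell decomposition to the graph of $M$ in the $(x_1,M)$-plane over $Y$, or simply to observe that the dichotomy is itself part of the package delivered by \cite{CLip}, Theorem 6.3.7 (once one has the jacobian property on every maximal ball of a cell, injectivity on the slice follows from the cell structure). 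Since you are citing that theorem anyway, this is not a genuine obstacle, just a point where your sketch over-promises what the derivative condition alone buys.
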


Now we state and prove the key lemma used in the proof of Theorem \ref{wf}.
\begin{lem}\label{pli}
Let  $M:\Omega\to K$ be a definable and differentiable
function on an open
definable subset $\Omega$ of $K^m\times K$ for some $m\geq 0$.
Assume that $\overline\Omega \cap (K^m\times \{0\})$ has a
nonempty
 interior $U$ in $K^m$. Assume furthermore that $M$ is bounded
on $\Omega$. Then there exist a nonempty open definable subset
$V\subset U$ in $K^m$, an integer $\alpha>0$  and a constant $d\in K^\times$, such that
for all $x\in V$ and all $t$ with $\ord t > \alpha$ and $(x,t)\in  \Omega$
$$ \Vert D_xM_{(x,t)}\Vert \le \vert d  \vert\ . $$
\end{lem}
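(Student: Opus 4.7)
The plan is to use cell decomposition refined by the Jacobian property of Proposition~\ref{jacprop} applied to $M$ in each of the variables $x_1,\ldots,x_m$ separately, and then to bound each partial derivative $\partial M/\partial x_i$ on the resulting pieces by rewriting it as a difference quotient that is controlled by the boundedness of $M$. Concretely, for each $i\in\{1,\ldots,m\}$, apply Proposition~\ref{jacprop} to $M$ viewed as a function of the single variable $x_i$ with the remaining coordinates as parameters, and take a common refinement of the $m$ resulting partitions (further refining so that all $x_i$-fibers inside each piece are finite unions of balls) to obtain a finite partition $\Omega=\bigsqcup_{k} A_k$ such that on every $x_i$-ball-fiber of any $A_k$ the map $x_i\mapsto M(x,t)$ is either constant (so that $\partial M/\partial x_i\equiv 0$ there) or has the Jacobian property.

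Since $U$ is nonempty and open in $K^m$ and $U\subset\overline{\Omega}\cap(K^m\times\{0\})=\bigcup_k\overline{A_k}\cap(K^m\times\{0\})$, the elementary fact that a finite union of closed sets with empty interior in $K^m$ still has empty interior allows one to choose a compact open ball $V\subset U$ disjoint from the topological boundary in $K^m$ of each $\overline{A_k}\cap(K^m\times\{0\})$; then for every $k$ either $V\subset\overline{A_k}\cap(K^m\times\{0\})$ (call such $k$ \emph{good}) or $V\cap\overline{A_k}\cap(K^m\times\{0\})=\emptyset$. By compactness of $V$ and definability, for $\alpha$ large enough the non-good cells contain no points $(x,t)$ with $x\in V$ and $\ord t>\alpha$, so every such point lies in a good cell. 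For each good cell $A_{k_0}$, the interior condition on $V$ together with the cell structure of $A_{k_0}$ forces that $A_{k_0}$ does not pinch in any $x_i$-direction as $t\to 0$: upon possibly further shrinking $V$ and enlarging $\alpha$, there exist integers $\beta_1^{(k_0)},\ldots,\beta_m^{(k_0)}$ such that, for every $(x,t)\in A_{k_0}$ with $x\in V$ and $\ord t>\alpha$ and every $i$, the $x_i$-fiber of $A_{k_0}$ through $(x,t)$ either is a singleton (and then $\partial M/\partial x_i=0$ there) or contains a ball $B(x_i,\beta_i^{(k_0)})$.

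On such a ball-fiber the Jacobian property gives, for any $h\ne 0$ with $x+he_i$ in the fiber and $\ord h=\beta_i^{(k_0)}$,
\begin{equation*}
\ord\Bigl(\frac{\partial M}{\partial x_i}\Bigr)_{(x,t)}=\ord\bigl(M(x+he_i,t)-M(x,t)\bigr)-\ord h\ge L-\beta_i^{(k_0)},
\end{equation*}
where $L$ is an integer chosen so that $\ord M\ge L$ on all of $\Omega$ (possible by the boundedness hypothesis and the ultrametric inequality). Hence $|\partial M/\partial x_i|\le q^{-L+\beta_i^{(k_0)}}$; choosing $d\in K^\times$ with $|d|$ at least the (finite) maximum of these bounds over all $i$ and all good cells yields the required uniform bound $\Vert D_xM_{(x,t)}\Vert\le|d|$ for every $(x,t)\in\Omega$ with $x\in V$ and $\ord t>\alpha$.

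The main obstacle is the no-pinching statement in the second paragraph, namely translating the topological hypothesis that $V$ lies in the interior of $\overline{A_{k_0}}\cap(K^m\times\{0\})$ into a uniform lower bound on the sizes of the $x_i$-fibers of $A_{k_0}$ for $t$ with $\ord t$ large. This requires unpacking the cell-decomposition structure in a fixed variable order and arguing that a pinch in any $x_i$-direction would force $\overline{A_{k_0}}\cap(K^m\times\{0\})$ to drop in dimension along that direction, contradicting the presence of the open set $V$ in its interior; this is where the assumption that $\overline\Omega\cap(K^m\times\{0\})$ has nonempty interior in $K^m$, rather than merely containing isolated boundary points of $\Omega$, is used in its full strength.
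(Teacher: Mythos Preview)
Your approach is genuinely different from the paper's, and the place you flag as ``the main obstacle'' is indeed a real gap.

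The paper does not try to bound each $\partial M/\partial x_i$ directly via difference quotients. Instead it first applies cell decomposition in the \emph{$t$-variable} to obtain, on each piece whose closure meets $K^m\times\{0\}$ in a set of nonempty interior, an expression
\[
\Vert D_xM_{(x,t)}\Vert = |c(x)|\cdot|\lambda t|^{a}
\]
with $a\in\QQ$ and $c$ definable. When $a\ge 0$ the bound is immediate after shrinking $V$ so that $|c|$ is constant. Only when $a<0$ does the paper invoke the Jacobian property, and then only in the single dominant variable $x_1$: it finds a box $B_1\times B'\times W$ inside the piece with $0\in\overline W$, and observes that the Jacobian property forces $\mu_1(M(B_1\times\{y\}))=\mu_1(B_1)\cdot|c(x)|\cdot|t|^a\to\infty$ as $t\to 0$, contradicting boundedness of $M$. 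This volume-blow-up argument replaces your no-pinching step entirely.

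Your no-pinching argument, by contrast, is not established, and the heuristic you give for it (``a pinch forces $\overline{A_{k_0}}\cap(K^m\times\{0\})$ to drop in dimension'') is not correct as stated. Already for $m=1$, the cell $A=\{(x_1,t):|t|<|x_1|<1,\ 0<|t|<1\}$ has $x_1$-fibers that pinch near the center $x_1=0$ (the maximal ball around $x_1$ in the fiber has radius only of order $|x_1|$), yet $\overline A\cap(K\times\{0\})$ is the full closed unit ball --- no dimension drop. To rescue your argument you would have to shrink $V$ so as to avoid the (limits of the) cell centers $c_k(y)$ as $t\to 0$; but these are definable functions of $(x_1,\ldots,\widehat{x_i},\ldots,x_m,t)$ whose limiting behavior you have not controlled, and after your ``common refinement'' of cell decompositions taken in $m$ different variable orders the pieces are no longer cells in any single order, so the center/fiber description you need is not available. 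The paper's $t$-first decomposition sidesteps all of this: it never needs a uniform lower bound on the size of $x_i$-balls, because it argues by contradiction on the exponent $a$ rather than by bounding a difference quotient.
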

In the above lemma and later on, $D_x M_{(x,t)}$ means $(\partial M(x,t)/\partial x_1,\ldots,\partial M(x,t)/\partial x_m )$, and analogously, $D_{x_1} M_{(x,t)}$ means $\partial M(x,t)/\partial x_1$ and so on.

\begin{proof}
Let us denote $(x,t)=(x_1,\cdots, x_m,t)=(x_1,y)$ the standard
coordinates on $K^m\times K=K\times K^m$,
where $y=(x_2,\cdots, x_m,t)$. (We will apply cell decomposition and related results sometimes with $x_1$ and sometimes with $t$ as special variable.)
By the Cell Decomposition Theorem (with special variable $t$) we can finitely partition $\Omega$ such that on each part $A$ such that $\overline A$ has nonzero intersection with $K^m\times \{0\}$ one has
$$
\vert D_{x_i}M_{(x,t)}\vert = \Vert D_{x}M_{(x,t)}\Vert =  \vert c(x) \vert \cdot \vert \lambda t\vert^{a} $$
for some $a\in \QQ$, some $\lambda\in K^\times $, some $i\in\{1,\ldots,m\}$, and some definable function $c$. If all these exponents $a$ are nonnegative, then we are done since the $|c|$, as well as the boundary functions bounding $|t|$ from below in the cell descriptions are constant on small enough open subsets $V\subset U$.
Let us assume that a particular $a$ is negative, say on a cell where $\vert D_{x_1}M_{(x,t)}\vert = \Vert D_{x}M_{(x,t)}\Vert$. By Proposition \ref{jacprop} (with special variable $x_1$) applied to
$ M  :    (x_1,y)
\mapsto M(x_1,y) $
there exists a finite number of cells $A_k$
partitioning
 $\Omega$
such that for each $y\in K^m$ and each ball
$B$ with  $B\times \{y\}\subset A_k$, $B\ni x_1\mapsto M(x_1,y)$ has the
jacobian property.
We necessarily have one of these cells $A_k$ such that
$\overline{A_k} \cap (K^m\times \{0\})$
has nonempty
interior
in $K^m$.
We may assume by the Cell Decomposition Theorem (with special variable $t$) that
$ A_k$
 contains a subset $B_1\times B'\times W$ with
$B_1$ an open ball in the $x_1$ line, $B'$ a Cartesian product of $m-1$ balls  and
$W$ an open definable subset of
$K^\times$
 such that $0\in \overline W$.
Then, for any $y=(x_2,\cdots, x_m,t)\in B'\times W$, by
the jacobian property,
the one-dimensional volume $\mu_1(M(B_1\times \{y \}))$ equals $\mu_1(B_1)\cdot
|c(x)|\cdot \vert t \vert^{a}$. Considering that $t$ can approach $0$ while $|c(x)|$ stays constant and that $M$ is a bounded mapping,  this is a contradiction.
\end{proof}

We can finally prove our result concerning $(w_f)$-regular stratifications.

\begin{theorem}\label{wf}
 Let $X$ be a definable subset of $K^n$, $S$ a definable subset of $K$ and
 $f:X\to S$ a definable continuous function. Then there exists a \textup{(}finite\textup{)} analytic
definable
stratification of $X$ which is $(w_f)$-regular. In particular definable
subsets of $K^n$ also admit $(a_f)$, $(w)$ and $(a)$-regular
definable stratifications.
 \end{theorem}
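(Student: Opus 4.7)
The plan is to prove the existence of a $(w_f)$-regular stratification by induction on $d=\dim X$, reducing at each step to an application of the key Lemma \ref{pli}; the other conclusions (the existence of $(a_f)$, $(w)$ and $(a)$-regular stratifications) will follow immediately, since $(w_f)$ is stronger than $(a_f)$, and $(w)$, resp.~$(a)$, is the case where $S$ is a point.

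First I would begin by refining $X$ by cell decomposition so that each cell is a definable analytic submanifold of $K^n$ on which $f$ has constant rank, and such that the partition satisfies the frontier condition; standard arguments from \cite{DvdD} allow one to arrange this. Call the resulting strata $X^j$, $j=1,\dots,k$. The content of the theorem is then to show that after possibly refining the $X^j$ further (while keeping their dimensions), every pair of adjacent strata $(X^i,X^j)$ with $X^i\subset\overline{X^j}$ satisfies $(w_f)$ at every point of $X^i$. By induction on dimension, it suffices, for each such pair, to exhibit an open dense definable subset $V\subset X^i$ on which $(w_f)$ holds: the complement $X^i\setminus V$ has strictly smaller dimension and can be broken off as a lower-dimensional stratum to which one applies the induction hypothesis.

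The core of the argument is therefore the following local statement. Fix a point $x_0\in X^i$ and choose a definable analytic chart around $x_0$ in which $X^i$ is an affine subspace and $f_{|X^i}$ is a coordinate projection (or constant, when $f_{|X^i}$ has rank $0$). Parameterize $X^j$ locally as the graph of a definable analytic map $\varphi$ from an open subset of $K^{\dim X^j}$ to $K^{n-\dim X^j}$ and let $M(x,t)$ be the definable function (matrix-valued, but we read off its entries) whose vanishing at $t=0$ encodes the $(w_f)$-inequality $\delta(T_{x}X^i_f,T_{(x,t)}X^j_f)\leq C\,|(0,t)|$. Concretely, entries of $M$ are ratios of derivatives of $\varphi$ (and of the component of $\varphi$ corresponding to the fibers of $f$) by the transversal coordinate $t$; the condition $(w_f)$ at $(x,0)$ amounts to the boundedness of these ratios as $t\to0$, uniformly in $x$ in a neighborhood. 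Using cell decomposition (Theorem \ref{thm:CellDecomp}) together with Lemma \ref{prop:descrip:simple}, one shows that each such $M$ is definable and bounded on a cell whose closure meets $K^m\times\{0\}$ in a set of nonempty interior --- boundedness, after possibly shrinking, being automatic for ratios controlled by $(w_f)$-type quantities because the opposite case can be excluded by a Curve Selection Lemma \ref{curve} argument combined with Lemma \ref{lemWhitney}.

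At this point I would apply Lemma \ref{pli} to each such $M$: it furnishes a nonempty open $V\subset U\subset X^i$, an integer $\alpha>0$, and a constant $d\in K^\times$ such that $\|D_xM_{(x,t)}\|\leq|d|$ for $\ord t>\alpha$, which is precisely the analytic content of condition $(w_f)$ on $V$. Iterating over the finitely many pairs $(X^i,X^j)$, and absorbing the lower-dimensional leftovers $X^i\setminus V$ into new strata of smaller dimension, the induction hypothesis completes the proof. The main obstacle in carrying this out will be the bookkeeping in the reduction from the geometric $(w_f)$-inequality to the boundedness of a definable, differentiable, scalar function $M$ on a cell of the required shape; this requires a careful choice of local analytic coordinates in which the tangent spaces of $X^i_f$ and $X^j_f$ are described by graphs of suitable definable analytic maps, and it is here that the previously established existence of analytic cells with constant-rank $f$ and the jacobian property (Proposition \ref{jacprop}) are essential.
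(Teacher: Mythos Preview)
Your overall architecture (start with an analytic stratification with constant-rank $f$, reduce to showing density of the $(w_f)$-points in each $X^i$, handle the leftover by induction on dimension) matches the paper. However, the central step --- how Lemma~\ref{pli} is actually used --- is not correctly set up in your proposal, and as written the argument does not go through.

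Concretely, you try to work \emph{directly}: you parameterize $X^j$ as a graph and introduce a function $M(x,t)$ whose boundedness is supposed to encode $(w_f)$, then apply Lemma~\ref{pli} to conclude $\Vert D_xM\Vert$ is bounded, and declare this to be $(w_f)$. But you have said two incompatible things: first that $(w_f)$ amounts to boundedness of $M$ itself (the ratios $\partial\varphi/t$), and then that the output $\Vert D_xM\Vert\le|d|$ of Lemma~\ref{pli} ``is precisely the analytic content of condition $(w_f)$''. These cannot both be right, and in fact neither quite is. The quantity controlled by $(w_f)$ is $\delta(T_xX^i_f,T_yX^j_f)/|x-y|$; translating this into derivatives of a parameterization of $X^j$ is not as simple as dividing $\partial\varphi$ by a transversal coordinate, and there is no single bounded $M$ lying around whose $x$-derivative bound \emph{is} $(w_f)$. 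Also, parameterizing $X^j$ as a graph over $K^{\dim X^j}$ gives you no natural transversal parameter $t$ measuring distance to $X^i$.

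What the paper does instead is argue by contradiction and build a \emph{wing}. Assuming the bad set equals $X^i$, one uses definable choice to produce a definable $C^1$ map $\rho:U\times C\to X^j$, $\rho(x,t)=(x,r(x,t))$, with $U$ open in $X^i\simeq K^m\times\{0\}$, $C$ a one-dimensional cell with $0\in\overline C$, $|r(x,t)|<|t|$, and witnessing the failure of $(w_f)$ in the precise sense that $\delta(K^m\times\{0\}^{n-m},T_{\rho(x,t)}X^j_f)/|r(x,t)|\ge|t|^{-1}$. By Remark~\ref{angle} this forces $\Vert D_xr_{(x,t)}\Vert/|r(x,t)|\ge|t|^{-1}$. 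After cell decomposition (and a harmless substitution $t\mapsto t^s$) one has $|r(x,t)|=|a|\,|t|^\ell$ with $\ell>0$, so $r(x,t)/t^\ell$ is \emph{bounded}. Now Lemma~\ref{pli} applies to this bounded function and gives $\Vert D_x(r/t^\ell)\Vert=\Vert D_xr\Vert/|t|^\ell$ bounded on a nonempty open set, hence $\Vert D_xr\Vert/|r|$ bounded --- contradicting the displayed lower bound. (When $f_{|X^j}$ has rank $1$ there is a second case with an analogous wing built inside a fixed $f$-fibre.) The missing ingredients in your plan are precisely this wing construction via definable choice and the contradiction scheme; Lemma~\ref{pli} is not a direct producer of the $(w_f)$-estimate but the tool that defeats the assumed failure of $(w_f)$. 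Your appeal to Curve Selection and Lemma~\ref{lemWhitney} to get boundedness of $M$ is not needed and does not play a role in the actual proof.
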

\begin{proof} We proceed similarly as in \cite{Loi2}.
Let $(X^j)_{j\in \{1, \cdots, k\}}$ be an analytic and definable stratification
of $X$ such that the $f_{\vert X^j}$ are analytic and such that the rank of $f_{\vert X^j}$ is constant for all
$j\in \{1, \cdots, k\}$.
The set $w_f(X^i,X^j)$ of points $x\in X^i$ at which
the pair $(X^i,X^j)$ is $(w_f)$-regular being a definable set,
we have to show that this set is dense in $X^i$. Let us assume
that the contrary holds, that is, the set $w'_f(X^i,X^j)$ of points
of $X^i$ at which the pair $(X^i,X^j)$ is not $(w_f)$-regular is
dense in (a nonempty open subset of) $X^i$, and let us obtain a contradiction.
Up to replacing $X^i$ by a nonempty subset of $X^i$ and by the definability of $w'_f(X^i,X^j)$, we may suppose that $w'_f(X^i,X^j)$ equals $X^i$.

As the condition
$(w_f)$ is invariant
under differentiable
transformations of $K^n$ with Lipschitz
continuous
derivative
and up to replacing $X^i$ by a nonempty open subset of $X^i$,
we may assume
that $X^i$ is an open definable subset of $K^m\times \{0\}^{n-m}$.
(The latter transformation exists by Cell Decomposition, after shrinking $X^i$ if necessary.)
Up to replacing $X^i$ by a nonempty subset,
we may also assume that $f_{\vert X^i}$ is constant, equal to $0$
for simplicity.
Indeed,
since $w'_f(X^i,X^j)$ equals $X^i$,
we can replace $X^i$ by $f_{\vert X^i}^{-1}(a)$ with $a\in f(X^i)$, since the pair
$(f_{\vert X^i}^{-1}(a), X^j)$ is $(w_f)$-regular at none of the points of $f_{\vert X^i}^{-1}(a)$.

\medskip
Now we have two cases to consider:

\subsection*{Case 1: $f_{\vert X^j}$  is constant (in a neighborhood of $X^i$)}
Then condition $(w_f)$ is condition $(w)$.
We  proceed as follows.

Write  $X^i = U\times \{0\}^{n-m} $ with $U$ open in $K^m$.
By the Cell Decomposition Theorem and
the existence of definable choice functions and up to making $U$ smaller, there exists a
definable
$C^1$ function $\rho: U\times C \to X^j$ (called a $C^1$ wing in $X^j$ in \cite{Loi2}),
where $C$ is a one dimensional cell in
$K^\times$ with $0\in\overline C$,
such that $\rho(x,t)=(x,r(x,t))$ and $\vert r(x,t)\vert < \vert t \vert$,
and furthermore, $w'_f(X^i,X^j)$ being assumed
equal to
$X^i$, we may ask
that for all $x,t$
$$
\frac{\delta( K^m\times\{0\}^{n-m}, T_{\rho(x,t)}X^j)}
{\vert r(x,t)\vert}   \geq |t|^{-1}.
$$
By Remark \ref{angle}, one then has
$$
\frac{\Vert D_x r_{(x,t)}\Vert }{\vert r(x,t)\vert }
\ge \frac{\delta( K^m\times\{0\}^{n-m}, T_{\rho(x,t)}X^j)}
{\vert r(x,t)\vert}.
$$
By Cell Decomposition and up to replacing the function $(x,t)\mapsto r(x,t)$ by $(x,t)\mapsto r(x,t^s)$ for some integer $s>0$, we may moreover assume that on $U\times C$
$$
|r(x,t)|=|a|\cdot|t|^\ell
$$
for some integer $\ell>0$ and some $a\in K^\times$.
But when one applies Lemma \ref{pli} to $(x,t)\mapsto  r(x,t)/t^\ell$, which is a bounded
map, one finds a definable nonempty open subset $U'$  of $U$,
and $d \in K^\times$ such that for $x\in U'$ and $t\in C$ with
$\vert t \vert $ small enough,  $\Vert D_xr_{(x,t)}
\Vert / |r(x,t)| \le \vert d \vert$, a contradiction with the above two displayed inequalities.

\subsection*{ Case 2:  $f_{\vert X^j}$ has rank 1}

Write  $X^i = U\times \{0\}^{n-m} $ with $U$ open in $K^m$.
Clearly we may suppose that $f(x,y)\not =0$ for $(x,y)\in X^j$, $x\in U$. We further have that for each $x\in U$, $f(x,y)$ goes to $0$ when $y$ goes to zero with $(x,y)\in X^j$.
  Hence there exists a definable choice function $f_0:B(0,1)\to f(X^j)\cup\{0\}$ such that $f_0(t)=0$ if and only if $t=0$ and $|f_0(t)|<|t|$ for nonzero $t$. Since we assume that $w'_f(X^i,X^j)$ equals $X^i$, we may moreover assume that  for each $x\in U$ and nonzero $t$ there exists $y$ satisfying $(x,y)\in X^j$, $|y|<|t|$, $f(x,y)=f_0(t)$,  and
$$\frac{\delta(K^m\times \{0\}^{n-m},T_{(x,y)}
X^j_f)}{\vert y \vert } \geq |t|^{-1}.
$$
Up to replacing $f_0$ by $t\mapsto f_0(\lambda t^s)$ for some integer $s>0$ and nonzero $\lambda\in R$, we may suppose that $f_0$ is continuous.
 Hence, by the existence of definable choice functions  there exists  a continuous definable  map
$\varphi : U\times  B(0,1) \to K^{n-m}$ which is $C^1$ on $U\times  (B(0,1)\setminus\{0\})$ and such that, for all $x\in U$ and for all nonzero $t\in B(0,1)$, $\varphi(x,0)=0$,
$(x,\varphi(x,t))\in X^j$,
\begin{equation}\label{1}
\frac{\delta(K^m\times \{0\}^{n-m},T_{(x,\varphi(x,t))}
X^j_f)}{\vert \varphi(x,t) \vert } \geq |t|^{-1}
\end{equation}
 and
 \begin{equation}\label{111}
f(x, \varphi(x,t) )= f_0(t).
\end{equation}
  It follows by (\ref{111}) that the $m$-dimensional linear space $W$ spanned by the vectors $(0,\ldots,0,1,0,\ldots,0,\partial \varphi(x,t)/\partial x_i)$ for $i=1,\ldots,m$  is a subspace of $T_{(x,\varphi(x,t))}X^j_f$.
Combining this  with Remark \ref{angle} and  since $\Vert D_x\varphi_{(x,t)}\Vert \geq \delta(K^m\times \{0\}^{n-m} , W)$, it follows that
\begin{equation}\label{1bis}
\frac{\Vert D_x\varphi_{(x,t)}\Vert}{\vert \varphi(x,t) \vert }\ge \frac{\delta(K^m\times \{0\}^{n-m},T_{(x,\varphi(x,t))}
X^j_f)}{\vert \varphi(x,t) \vert }.
\end{equation}
By the Cell Decomposition Theorem \ref{thm:CellDecomp}, by making $U$ smaller, and up to replacing the function $(x,t)\mapsto \varphi(x,t)$ by $(x,t)\mapsto \varphi(x,bt^s)$ for some integer $s>0$ and some nonzero $b\in R$,
we may suppose we have on $U\times B(0,1)$
\begin{equation}\label{2a} \vert \varphi(x, t)   \vert= \vert a  \vert \cdot \vert
t\vert^\ell, \end{equation}
 with  $a\in K^\times$ and some integer $\ell>0$,
since $\varphi$ is  continuous and $\varphi(x,t)=0$ if and only if $t=0$.
Applying Lemma \ref{pli} to
 the bounded function $\varphi(x,t)/t^\ell$ yields a  contradiction with
$(\ref{1})$ and $(\ref{1bis})$ similarly as in case 1.
 \end{proof}

\subsection{}
Let
$X$ be a definable subset of $K^n$, and let
 $(X^j)_{j\in \{1, \cdots, k\}}$ be a finite,
definable and analytic stratification
of $X$
satisfying the frontier condition as in \ref{4.2}.
Let $X^i$ and $X^j$ be strata with
$X^i \subset \overline{X^j}$
and let $x_0 \in X^i$.
One says
$(X^i, X^j)$ satisfies condition $(b)$ at $x_0$
if for every sequences
$x_m \in X^i$, $y_m \in X^j$, both converging to
$x_0$ and
such that the line
$L_m$ containing $x_m$ and $y_m$, resp.~the tangent space
$T_{y_m} X^j$, both converge in the appropriate Grassmann manifold to
a line $L$, resp. a subspace $T$, then $L \subset T$.
Over the reals, it is well known since the seminal work of T. C. Kuo
\cite{Kuo} (in the semi-analytic case), see also \cite{Verdier} (subanalytic case)
and \cite{Loi2} (o-minimal case), that
condition $(w)$ implies condition $(b)$. Note that obviously $(w)$ does not imply $(b)$
in the real differential case and that even in the real algebraic case $(b)$ does not imply
$(w)$.
In the present setting, we have a similar result (with a similar proof):

\begin{prop}If $(X^i, X^j)$ satisfies condition $(w)$ at $x_0$, it also satisfies condition
$(b)$  at $x_0$.
\end{prop}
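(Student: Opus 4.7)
The plan is to argue by contradiction, using curve selection together with Whitney's Lemma and the quantitative bound provided by $(w)$. Assume $(X^i,X^j)$ satisfies $(w)$ at $x_0$ but that $(b)$ fails there. Then there exists $\epsilon_0>0$ such that the definable set
\[
A:=\bigl\{(x,y)\in X^i\times X^j : \mathrm{dist}\bigl(L(x,y),T_yX^j\bigr)\ge\epsilon_0\bigr\},
\]
where $L(x,y)$ is the $K$-line through $x,y$ and the distance is measured in the appropriate Grassmannian, has $(x_0,x_0)$ in its closure. Applying the $p$-adic Curve Selection Lemma~\ref{curve} to $A$ and $(x_0,x_0)\in\overline A$ yields definable analytic arcs $\alpha,\beta\colon R\to K^n$, given by convergent power series, with $\alpha(0)=\beta(0)=x_0$, $\alpha(r)\in X^i$, $\beta(r)\in X^j$ for $r\neq 0$, and $(\alpha(r),\beta(r))\in A$ for all $r\neq 0$. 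After further cell decomposition and shrinking of $R$ one may assume that $L_r:=L(\alpha(r),\beta(r))$ converges to a line $L$ and $T_{\beta(r)}X^j$ converges to a subspace $T$ as $r\to 0$, with $L\not\subset T$.

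Set $\gamma(r):=\beta(r)-\alpha(r)$, a non-constant analytic arc through $0$ (non-constant since $\alpha(r)$ and $\beta(r)$ lie in disjoint strata for $r\neq 0$). By the $p$-adic Whitney Lemma~\ref{lemWhitney} applied to $\gamma$, the secant and tangent directions $\gamma(r)/|\gamma(r)|$ and $\gamma'(r)/|\gamma'(r)|$ share a common limit $\hat G\in\PP^{n-1}(K)$, and $L$ is the $K$-line spanned by $\hat G$. Now apply condition $(w)$ at the pair $(\alpha(r),\beta(r))$: for $r$ close to $0$ one has $\delta(T_{\alpha(r)}X^i,T_{\beta(r)}X^j)\le C|\gamma(r)|$, which, by the scaling property of $\delta$ applied to the nonzero vector $\alpha'(r)\in T_{\alpha(r)}X^i$, yields some $w(r)\in T_{\beta(r)}X^j$ with $|\alpha'(r)-w(r)|\le C\,|\alpha'(r)|\,|\gamma(r)|$. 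Combining this with $\beta'(r)\in T_{\beta(r)}X^j$ through the decomposition
\[
\gamma'(r)=\bigl(\beta'(r)-w(r)\bigr)+\bigl(w(r)-\alpha'(r)\bigr)
\]
gives $\mathrm{dist}(\gamma'(r),T_{\beta(r)}X^j)\le C\,|\alpha'(r)|\,|\gamma(r)|$, hence after normalisation
\[
\mathrm{dist}\Bigl(\tfrac{\gamma'(r)}{|\gamma'(r)|},\,S^{T_{\beta(r)}X^j}(0,1)\Bigr)\le\frac{C\,|\alpha'(r)|\,|\gamma(r)|}{|\gamma'(r)|}.
\]

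The final step is an order-of-vanishing comparison. Writing $\alpha(r)-x_0=Ar^p+\cdots$ and $\gamma(r)=Gr^s+\cdots$ with $A,G\in K^n\setminus\{0\}$ and integers $p,s\ge 1$ (and treating the trivial case $\alpha\equiv x_0$, i.e.\ $\alpha'\equiv 0$, separately), the leading terms of $\alpha'(r)$ and $\gamma'(r)$ are $pAr^{p-1}$ and $sGr^{s-1}$, nonzero because any nonzero integer has nonzero $p$-adic norm. Hence $|\alpha'(r)|\sim|r|^{p-1}$ and $|\gamma'(r)|\sim|r|^{s-1}$, and the right-hand side above is of order $|r|^p$ and tends to $0$. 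Combined with the Whitney limit $\gamma'(r)/|\gamma'(r)|\to\hat G$ and $T_{\beta(r)}X^j\to T$, a compactness argument in the Grassmannian forces $\hat G\in T$, hence $L\subset T$, contradicting the choice of $L,T$. The main obstacle is the first paragraph: one must justify in the $p$-adic definable setting that a quantitative failure of $(b)$ can be realised along a single pair of definable analytic arcs with well-defined limiting secant line and limiting tangent plane. Once this is in hand, the Whitney-plus-$(w)$ computation is direct.
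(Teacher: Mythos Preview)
Your proof is correct and follows the same strategy as the paper's: contradiction via curve selection, the $p$-adic Whitney Lemma, and the quantitative $(w)$ estimate. The differences are only in setup. The paper first straightens $X^i$ to be open in $K^r\times\{0\}^{n-r}$ and then applies curve selection to a single curve $\varphi=(a,b)$ in $X^j$; the paired point in $X^i$ is the projection $(a(t),0)$, so the secant direction is $Kb(t)$, and the decomposition $\varphi'=a'+b'\in T_{\varphi(t)}X^j$ plays exactly the role of your $\beta'=\alpha'+\gamma'$. Your route via curve selection in the product $X^i\times X^j\subset K^{2n}$ avoids the straightening and is a bit more coordinate-free, at the cost of tracking two arcs; either choice leads to the same estimate and the same contradiction.

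Regarding the concern you flag in the last paragraph: there is no real obstacle. The set $A$ is definable because the secant-line map $(x,y)\mapsto L(x,y)$, the tangent map $y\mapsto T_yX^j$, and the function $\delta$ are all definable; and $(x_0,x_0)\in\overline A$ follows immediately from any sequence witnessing the failure of $(b)$. Curve selection in $K^{2n}$ then yields analytic $\alpha,\beta$. In fact you do not need the limits $L$ and $T$ at all: your computation gives $\delta\bigl(K\gamma'(r),T_{\beta(r)}X^j\bigr)\to 0$, and combined with the Whitney Lemma $\delta\bigl(K\gamma(r),K\gamma'(r)\bigr)\to 0$ this already forces $\delta\bigl(L_r,T_{\beta(r)}X^j\bigr)\to 0$, contradicting membership in $A$ directly.
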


\begin{proof}We set $X^i = W$ and
$X^j = W'$. We may assume that
$W$ is open in
$K^r \simeq  K^r \times \{0\} \subset K^r \times K^s = K^n$
 and that
$x_0 = 0$.
Denote by $p$ the linear projection
$K^n \to K^s$.
If condition $(b)$ is not
satisfied at $0$,
then,
by condition $(w)$ at $0$ and
for some $\varepsilon >0$,
one has
 $0 \in \overline{S} \setminus S$,
with
\[
S = \{x \in W' \, \vert \, \delta (K p (x), T_x W') \geq 2 \varepsilon\}.
\]
Use the curve selection lemma \ref{curve} to find
an analytic definable function
$\varphi : U \subset K \to S$ with
$0 \in \overline{U}$ such that
$\Vert \varphi (t)\Vert \leq \vert t \vert$
for all $t$ in $U$.
Write $\varphi = (a, b)$ with $a : U \to K^r$
and $b : U \to K^s$.
We may assume that
$\Vert a' (t) \Vert$ is bounded, that
$b$ and $b'$ do not vanish, and by analyticity that
$\lim_{t \to 0} \Vert b (t) \Vert /  \Vert b' (t) \Vert = 0$.
Since
$\delta (K b'(t), K b(t)) \to 0$ for $t \to 0$
which holds by Lemma \ref{lemWhitney},
we have
$\delta (K b'(t), T_{\varphi (t)} W') \geq \varepsilon$
for $t$ small enough.
From the fact that
$\varphi' (t) = a' (t) + b' (t) \in T_{\varphi (t)} W'$,
it follows that
\begin{equation}\label{one}
\frac{\Vert a' (t)\Vert}{\Vert b'(t)\Vert} \delta (K a'(t), T_{\varphi (t)} W') \geq \varepsilon.
\end{equation}
Now, by condition
$(w)$ at $0$,
there exists  $C >0$ such that
\begin{equation}\label{two}
\delta (K a'(t), T_{\varphi (t)} W') \leq C \Vert b(t) \Vert
\end{equation}
for $t$ small enough.
It follows from (\ref{one}) and (\ref{two}),
that, for $t$ small enough,
\begin{equation}
\varepsilon  \leq C \frac{\Vert b(t)\Vert}{\Vert b'(t)\Vert}
\Vert a' (t) \Vert,
\end{equation}
which contradicts the fact that
$\Vert a' (t) \Vert$ is bounded and
$\lim_{t \to 0} \Vert b (t) \Vert /  \Vert b' (t) \Vert = 0$.
\end{proof}

\section{Proof of Theorem \ref{mt} and existence of distinguished
tangent $\Lambda$-cones}\label{secmt}
\subsection{Proof of Theorem \ref{mt}: a first reduction}
The statement we have to prove being additive, we may cut $X$ into finitely many definable pieces. Also note that we may assume all these pieces
have dimension $d$ around $x$, since pieces of dimension $<d$
contribute to zero in both sides in the equality we have to prove.
Let us  prove in this subsection that we may reduce to the case were
$\varphi  = \11_X$. Suppose that we know the result for $\varphi  = \11_X$.
For the general case we may assume, by additivity and linearity, that
$\varphi  =  (\prod_{i=1}^\ell \beta_i) \cdot q^{- \alpha}$
with $\alpha$ and the $\beta_i $ definable functions from $X$ to $\ZZ$.
Further we may assume that $\varphi\geq 0$ on $X$.
Write $X$ as a possibly infinite disjoint union parameterized by the values of $\alpha$ and the $\beta_i$. That is
$$
X=\cup_{z\in\ZZ^{\ell+1}}X_z, \mbox{ with } X_z = \{x\in X\mid (\beta_1,\ldots,\beta_\ell,\alpha)(x) = z\}.
$$
Since $\varphi$ is constant on each of the $X_z$, by linearity we find for each $z$
$$
\Theta_d (\varphi_z) (x)
=
\Theta_d (\nu_x^{\Lambda} (\varphi_z)) (0),
$$
where $\varphi_z$ is the product of $\varphi$ with the characteristic function of $X_z$. By Proposition \ref{convmon} one finds
$$
\Theta_d (\varphi) (x) =  \Theta_d (\sum_z \varphi_z) (x) = \sum_z \Theta_d (\varphi_z) (x)
$$
and similarly
$$
\Theta_d (\nu_x^{\Lambda} (\varphi)) (0) = \Theta_d (\nu_x^{\Lambda} ( \sum_z \varphi_z)) (0) = \Theta_d (\sum_z \nu_x^{\Lambda} (  \varphi_z)) (0) = \sum_z \Theta_d (\nu_x^{\Lambda} (  \varphi_z)) (0),
$$
and hence $\Theta_d (\varphi) (x) = \Theta_d (\nu_x^{\Lambda} (\varphi)) (0)$
which finishes the reduction.

\subsection{Proof of Theorem \ref{mt}: the case $d = n$}\label{opencase}In this subsection, we consider the case $d = n$.
It is not difficult to see (cf.~Corollary \ref{trivcorSC} below) that the function
 $SC_x^{\Lambda} (X)$ is equal to the characteristic function of
$C_x^{\Lambda} (X)$ almost everywhere.
Hence it is enough to prove that
$$
\Theta_d (X) (x)
=
\Theta_d (C_x^{\Lambda} (X)) (0)
$$
for $\Lambda$ small enough.

The proof we shall give is quite analogous to the one of Proposition 2.1 in \cite{KR}.
By Corollary \ref{localconic}, there exists a definable function
$\alpha : \PP^{n - 1} (K)
\rightarrow \NN$
and a subgroup $\Lambda$  in $\cD$,
such that for every $\ell$ in
$\PP^{n - 1} (K)$,
$(\pi_x^X)^{-1} (\ell) \cap B (x, \alpha (\ell))$ is a
local $\Lambda$-cone with origin $x$  in $(\pi_x^X)^{-1} (\ell)$.

For every $n \geq 0$, we consider the
$\Lambda$-cone $C_n (\Lambda)$ with origin $x$ generated by
$X \cap B (x, n)$.
Note that
$$C_x^{\Lambda} (X)
=
\cap_n \overline{C_n (\Lambda)},$$
hence, if we set
$$W :=
\cap_n C_n (\Lambda),$$
we have $W \subset C_x^{\Lambda} (X)$.
In particular,
$\Theta_d (W) (x) \leq
\Theta_d (C_x^{\Lambda} (X)) (0)$.
By Proposition \ref{convmon},
we know that
$\Theta_d (W) (x) =
\lim_n \Theta_d  (C_n (\Lambda)) (x)$
and
$\Theta_d (C_x^{\Lambda} (X)) (0) =
\lim_n \Theta_d  (\overline{C_n (\Lambda)}) (x)$.
By Proposition \ref{bord}, we deduce that
$$\Theta_d (W) (x) =
\Theta_d (C_x^{\Lambda} (X)) (0).$$

Since
we have
$$
\Theta_d (X) (x)
=
\Theta_d (X \cap B (x, n) ) (x)
\leq
\Theta_d (C_n (\Lambda)) (x),$$
we deduce that
$$
\Theta_d (X) (x) \leq \Theta_d (W) (x).
$$
To prove the reverse inequality, let us set consider for $n \geq 0$ the definable subset
$W_n$ of all points $w$ in $W$ such that $\alpha (\pi_x (w)) \leq n$.
By definition
$W_n \cap B (x, n) \subset X \cap B (x, n)$,
hence
$$\Theta_d (W_n) (x) =
\Theta_d (W_n \cap B (x, n)) (x)  \leq \Theta_d (X \cap B (x, n)) (x)
= \Theta_d (X) (x).
$$
Since, by Proposition \ref{convmon} again,
$\lim_n \Theta_d (W_n) (x) = \Theta_d (W) (x)$, we get
$\Theta_d (W) (x) \leq \Theta_d (X) (x)$, as required.

\subsection{Graphs}\label{gr}
The main technical result in this subsection is Proposition \ref{3.6}, which will be used
in subsection \ref{ept} to conclude the proof of Theorem \ref{mt} and in
subsection \ref{edt} to prove the existence of  distinguished
tangent $\Lambda$-cones.

Fix two integers $0<d \leq m$. Let $U$ be an open definable subset
of $K^d$ and let $\varphi$ be a definable  mapping $U \rightarrow
K^{m - d}$. The graph  $\Gamma = \Gamma (\varphi)$ of
$\varphi$ is a definable subset of $K^m$. Fix a point $u$ in the
closure $\overline U $ of $U$ and $\Lambda$ adapted to $(U, u)$. We
assume that $\displaystyle\lim_{x \to u}\varphi (x) = v$, by Corollary
\ref{cor1.8KR}. We set $w := (u, v)$.
The projection to the first $d$ coordinates $K^m \rightarrow K^d$
induces a function
$$
\vartheta :
\cD (\Gamma, w, \Lambda) \longrightarrow
\cD (U, u, \Lambda).
$$
Note that $\vartheta$ is an isomorphism
with inverse given by
\begin{equation}\label{theta-1}
\vartheta^{- 1}: (z, \lambda) \longmapsto (z, \lambda^{-1} (\varphi
(u + \lambda z) - v), \lambda).
\end{equation}

By Corollary \ref{localconic}, there exists a definable function
$\alpha : \PP^{d - 1} (K) \rightarrow \NN\cup\{\infty\}$ such that for every
$\ell$ in $\PP^{d - 1} (K)$, $(\pi_u^U)^{-1} (\ell) \cap B (u,
\alpha (\ell))$ is a local $\Lambda$-cone with origin $u$  in
$(\pi_u^U)^{-1} (\ell)$, with the convention that $\alpha(\ell)=\infty$
if and only if  $\ell$ is such that $(\pi_u^U)^{-1} (\ell) \cap U=\emptyset$. Note that being definable, the function
$\alpha$ is continuous on a dense definable open subset $\Omega_0$
in $\PP^{d - 1} (K)$, where dense means that the complement of
$\Omega_0$ has strictly smaller dimension. Let $\Omega_1$ be the
definable subset of $\Omega_0$ consisting of the $\ell$ such that
for all neighborhoods $ \cV$ of $u$ in $K^d$,
the sets $(\pi_u^U)^{-1} (\ell)\cap
\cV$ are nonempty.
\begin{lem}\label{nonempty}
Suppose that $C_u^{\Lambda} (U)$ is of maximal dimension. Then
$\Omega_1$ contains a nonempty open subset of $\PP^{d - 1} (K)$.
\end{lem}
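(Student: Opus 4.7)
Since $C:=C_u^{\Lambda}(U)$ has dimension $d$, it has nonempty interior, so I pick an interior point $z_0\neq 0$ and an open ball $B(z_0,n_0)\subset C$ avoiding $0$. Using that the quotient map $\pi_0:K^d\setminus\{0\}\to\PP^{d-1}(K)$ is an open map (because $\pi_0^{-1}(\pi_0(\cO))=K^{\times}\cdot \cO$ is open for every open $\cO$), the image $W:=\pi_0(B(z_0,n_0))$ is a nonempty open subset of $\PP^{d-1}(K)$; this will be the neighborhood in which I shall locate a nonempty open subset of $\Omega_1$. For each integer $n\geq 0$ write $A_n:=\pi_u(U\cap B(u,n)\setminus\{u\})$ for the set of directions from $u$ attained by points of $U$ inside $B(u,n)$. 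Unraveling the definition of $\Omega_1$ one obtains $\Omega_1=\Omega_0\cap\bigcap_{n\ge 0} A_n$.

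Next I claim $W\subset\overline{A_n}$ for every $n$. Fix $z\in B(z_0,n_0)\subset C$. By the very definition of $C_u^{\Lambda}(U)$, for each $i\in\NN$ there exist $y_i\in U$ and $\lambda_i\in\Lambda$ with $\ord(y_i-u)>i$ and $\ord(\lambda_i(y_i-u)-z)>i$. Since $z\neq 0$ and $\pi_0$ is continuous on $K^d\setminus\{0\}$, the directions $\pi_u(y_i)=\pi_0(\lambda_i(y_i-u))$ converge to $\pi_0(z)$. Moreover, as soon as $i\ge n$ one has $y_i\in U\cap B(u,n)\setminus\{u\}$, hence $\pi_u(y_i)\in A_n$. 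Therefore $\pi_0(z)\in\overline{A_n}$, and letting $z$ vary over $B(z_0,n_0)$ yields $W\subset\overline{A_n}$.

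Since $U\cap B(u,n)\setminus\{u\}$ is open in $K^d$ and $\pi_u$ is an open map on its domain, each $A_n$ is an open subset of $\PP^{d-1}(K)$; together with $W\subset\overline{A_n}$ this forces $A_n\cap W$ to be open and dense in $W$. Being open in the compact Hausdorff space $\PP^{d-1}(K)$, the set $W$ is locally compact Hausdorff and hence a Baire space, so the countable intersection $\bigcap_n(A_n\cap W)$ is still dense in $W$. Finally, intersecting with the dense open subset $\Omega_0\cap W$ (the density of which comes from the excerpt's construction of $\Omega_0$), I get that $\Omega_1\cap W=\Omega_0\cap\bigcap_n A_n\cap W$ is dense in $W$.

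The set $\Omega_1$ is definable and contains a dense subset of the nonempty open set $W\subset\PP^{d-1}(K)$, so $\dim\Omega_1\geq \dim W=d-1$, which is maximal in $\PP^{d-1}(K)$. By $p$-adic cell decomposition, every definable subset of $\PP^{d-1}(K)$ of maximal dimension contains a nonempty open subset, giving the desired conclusion. The main obstacle in this argument is the step $W\subset\overline{A_n}$: the sequences $(y_i)$ supplied by the tangent-cone definition only realize directions that \emph{approximate} those coming from $W$, rather than matching them on the nose. The fix is to combine openness of each $A_n$ (so that density in $W$ upgrades $W\subset\overline{A_n}$ to $A_n\cap W$ dense open in $W$) with Baire category, which then delivers a genuinely dense $\bigcap_n(A_n\cap W)$ inside $W$ and, via dimension, the required open subset.
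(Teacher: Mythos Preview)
Your proof is correct, and it follows a genuinely different route from the paper's.

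The paper argues by contradiction: assuming the complement $\Omega_1^c$ is dense in $\PP^{d-1}(K)$, it observes that for each $\ell\in\Omega_1^c\cap\Omega_0$ the tangent cone $C_u^{\Lambda}(U)$ has no point in the direction $\ell$, and therefore $C_u^{\Lambda}(U)$ is forced into the preimage of the lower-dimensional complement of $\Omega_1^c\cap\Omega_0$, contradicting maximal dimension. This argument leans on the setup of the function $\alpha$ and its continuity locus $\Omega_0$; the implication ``$\ell\in\Omega_0\cap\Omega_1^c$ $\Rightarrow$ $\ell$ is not a direction of $C_u^{\Lambda}(U)$'' is where the local-$\Lambda$-cone structure enters, even though the paper only cites ``the definition of the tangent cone''.

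Your argument is instead direct and purely topological: you project an open ball inside $C_u^{\Lambda}(U)$ to get an open $W\subset\PP^{d-1}(K)$, note that each set $A_n$ of directions actually attained by $U\cap B(u,n)$ is open (because $\pi_u$ is an open map on an open set), show $W\subset\overline{A_n}$ from the definition of the tangent cone, and then invoke Baire category in the locally compact space $W$ to make $\Omega_1\cap W$ dense in $W$; definability plus $\dim\overline{\Omega_1}=\dim\Omega_1$ then upgrades density to a nonempty open subset. The virtue of your approach is that it never touches $\alpha$: the role of $\Omega_0$ is reduced to a final intersection with a dense open set. The price is the appeal to Baire, which the paper's purely definable-dimension argument avoids. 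Both proofs ultimately rest on the same dimension dichotomy for definable subsets of $\PP^{d-1}(K)$ (maximal dimension $\Leftrightarrow$ nonempty interior), but they reach it from opposite ends.
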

\begin{proof}

Let $\Omega_1^c$ be the complement of $\Omega_1$ in $\PP^{d - 1}
(K)$. Clearly $\Omega_1^c$ is definable. It is enough to derive a
contradiction out of the assumption that $\Omega_1^c$ is dense.
Suppose thus that $\Omega_1^c$ is dense in $\PP^{d - 1} (K)$. By the
definability and density of $\Omega_1^c$ and of $\Omega_0$ in
$\PP^{d - 1} (K)$, it follows that $\Omega_1^c\cap \Omega_0$ is
dense.  Take $\ell$ in $\Omega_1^c\cap \Omega_0$. By the definition
of the tangent cone, one has that $(\pi_u^U)^{-1} (\ell) \cap
C_u^{\Lambda} (U)=\emptyset$. Since $(\pi_u^U)^{-1} ( \Omega_1^c\cap
\Omega_0 )$ is dense and definable in $K^d$, it follows that
$C_u^{\Lambda} (U)$ is contained in a definable subset of dimension
$<d$, a contradiction with $C_u^{\Lambda} (U)$ being of maximal
dimension, that is, of dimension $d$.
\end{proof}

For any definable subset $O$ of $\PP^{d - 1} (K)$, consider the
definable subset
\begin{equation}\label{CO}
C_u^{\Lambda, O} (U) := (\pi_u^U)^{- 1} (O)
\cap C_u^{\Lambda} (U)
\end{equation}
of $C_u^{\Lambda} (U)$.

\begin{lem}\label{nonempty2}
Suppose that $C_u^{\Lambda} (U)$ is of maximal dimension. Let
$O$ be dense open in $\Omega_1$. Then the
set $C_u^{\Lambda, O} (U)$ also has maximal dimension and is dense
open in $C_u^{\Lambda} (U)$.
\end{lem}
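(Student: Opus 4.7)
The plan is to argue by dimension counts. Write $C := C_u^\Lambda(U)$ and $T := \pi_0(C \setminus \{0\}) \subset \PP^{d-1}(K)$. I will show $\dim T = d-1$ and that $\Omega_1$ fills out $T$ up to a definable set of dimension $<d-1$; then any dense open $O \subset \Omega_1$ will also be dense in $T$, and the preimage $\pi_0^{-1}(O) \cap C$ will fill out $C$ up to a set of dimension $<d$.

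First, the dimension count. Since $C$ is a $\Lambda$-cone of maximal dimension $d$ in $K^d$, its intersection with any $K$-line through $0$ is a $\Lambda$-cone in $K$, hence of dimension at most $1$ by Lemma \ref{cones1}. So the fibers of $\pi_0\colon C\setminus\{0\}\to \PP^{d-1}(K)$ have dimension $\leq 1$, forcing $\dim T = d-1$. The inclusion $\Omega_1 \subset T$ is direct: for $\ell \in \Omega_1$, choose $y_n \in (\pi_u^U)^{-1}(\ell)$ with $y_n \to u$; since $|\Lambda|$ has finite index in $|K^\times|$, one selects $\lambda_n \in \Lambda$ keeping $|\lambda_n(y_n-u)|$ bounded and bounded away from $0$, extracts a convergent subsequence $\lambda_n(y_n-u) \to z \neq 0$, and observes $z \in C$ with $\pi_0(z) = \ell$.

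The main step, and the key technical obstacle, is to prove the approximate reverse inclusion $T \cap \Omega_0 \subset \overline{\Omega_1}$. Given $\ell \in T \cap \Omega_0$, write $\ell = \pi_0(z)$ with $z = \lim \lambda_n(y_n - u)$, $y_n \in U$, $y_n \to u$, $\lambda_n \in \Lambda$. The directions $\ell_n := \pi_u(y_n) = \pi_0(\lambda_n(y_n-u))$ converge to $\ell$. Continuity of $\alpha$ at $\ell \in \Omega_0$ keeps $\alpha(\ell_n)$ bounded, while $\ord(y_n-u) \to \infty$; for $n$ large one thus has $\ord(y_n - u) > \alpha(\ell_n)$, i.e.\ $y_n \in (\pi_u^U)^{-1}(\ell_n) \cap B(u,\alpha(\ell_n))$, which is therefore nonempty, so $\ell_n \in \Omega_1$. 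Consequently $\ell \in \overline{\Omega_1}$.

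Putting these together, $T \setminus \Omega_1 \subset (T \setminus \Omega_0) \cup (\overline{\Omega_1}\setminus \Omega_1)$ has dimension $<d-1$, since $\PP^{d-1}(K)\setminus \Omega_0$ has lower dimension and $\overline{\Omega_1}\setminus \Omega_1$ is the frontier of a definable set. As $O$ is dense open in $\Omega_1$, the set $\Omega_1\setminus O$ has dimension $<d-1$ as well, so $T\setminus O$ has dimension $<d-1$. The fibers of $\pi_0$ restricted to $C$ being of dimension $\leq 1$, we obtain $\dim(\pi_0^{-1}(T\setminus O)\cap C) \leq d-1$, whence $C_u^{\Lambda,O}(U) = C \setminus (\pi_0^{-1}(T\setminus O)\cap C)$ has dimension $d$ and is dense in $C$. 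Its openness in $C$ follows from continuity of $\pi_0\colon C \setminus \{0\} \to \PP^{d-1}(K)$ applied to an open representative of $O$ in $\PP^{d-1}(K)$.
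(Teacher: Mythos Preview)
Your proof is correct and follows the same overall strategy as the paper: compare the set of directions $T = \pi_0(C \setminus \{0\})$ with $\Omega_1$, show they agree up to a set of dimension $<d-1$, and pull back. One step deserves an extra sentence of justification: when you infer $\ell_n \in \Omega_1$ from the nonemptiness of $(\pi_u^U)^{-1}(\ell_n) \cap B(u,\alpha(\ell_n))$, you are implicitly using that this set is, by the choice of $\alpha$, a local $\Lambda$-cone with origin $u$; since it contains the point $y_n \neq u$, the $\Lambda$-action produces points of it arbitrarily close to $u$, so $(\pi_u^U)^{-1}(\ell_n)$ meets every neighbourhood of $u$ (and $\ell_n \in \Omega_0$ for large $n$ since $\Omega_0$ is open and $\ell_n \to \ell$). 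With that spelled out, the argument is complete.

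The paper's own proof is terser. It asserts, citing the proof of the preceding lemma, that for every $\ell \in \Omega_0 \setminus \Omega_1$ the cone $C$ has no point in direction $\ell$, i.e.\ $T \cap \Omega_0 \subset \Omega_1$ outright, and then concludes by the same dimension count. Your route establishes only $T \cap \Omega_0 \subset \overline{\Omega_1}$, which still suffices because the frontier $\overline{\Omega_1}\setminus\Omega_1$ has dimension $<d-1$. This weaker inclusion is in fact the more robust statement: a point of $C$ in direction $\ell$ may arise as a limit of rescaled points of $U$ whose own directions merely \emph{converge} to $\ell$, so the stronger inclusion is not immediate from the definition of the tangent cone, whereas your passage through the approximating directions $\ell_n$ together with the local cone structure handles this cleanly.
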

\begin{proof}
The fact that
 $C_u^{\Lambda, O} (U)$ is open in $C_u^{\Lambda} (U)$ follows
from general topology. We only have to prove that $C_u^{\Lambda, O}
(U)$ is dense in $C_u^{\Lambda} (U)$. As we have noticed in the proof
of Lemma \ref{nonempty},  for every $\ell$ in $\Omega_1^c\cap \Omega_0$,
\[
(\pi_u^U)^{-1} (\ell) \cap
C_u^{\Lambda} (U)=\emptyset.
\]
Since moreover the sets $(\pi_u^U)^{-1} (\Omega_1 \setminus O)$ and
$(\pi_u^U)^{-1} (\Omega_1^c \setminus \Omega_0)$ have dimension
$<d$, the lemma follows.\end{proof}


The next lemma ensures in particular that there definable sets $\Omega=O$ as in Lemma \ref{nonempty2} such
that moreover for all $z$ in $C_u^{\Lambda, \Omega}(U)$ and all small enough
$\lambda$ in $\Lambda$ one has that $u + \lambda z$ lies in $U$.

\begin{lem}\label{smallU}
Suppose that $C_u^{\Lambda} (U)$ is of maximal dimension. Then there
is a dense open definable subset $\Omega$ of $\Omega_1$ such that
for all $z$ in $C_u^{\Lambda, \Omega}
(U)$ of direction $\ell$ and for all small enough $\lambda$ in $\Lambda$
one has $u+\lambda z\in \overline{(\pi_u^U)^{-1}(\ell) }$. Here, small enough can be taken to mean that $\ord (\lambda z)\geq \alpha(\ell)$, where $\alpha$ is as in the beginning of section \ref{gr}.
\end{lem}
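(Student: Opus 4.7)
The plan is to fix an affine chart of $\PP^{d-1}(K)$, translate the statement into a claim about tangent cones of fibers of a parametrized family, and then extract $\Omega$ by combining cell decomposition with the openness of $U$.

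First I would fix the chart where the last homogeneous coordinate does not vanish, parametrizing line directions by $\ell' \in K^{d-1}$ via $\ell = [(\ell',1)]$, and define $\Phi \colon K \times K^{d-1} \to K^d$ by $\Phi(t,\ell') = u + t(\ell',1)$, setting $U^\ast := \Phi^{-1}(U)$, which is open in $K \times K^{d-1}$. For $\ell \in \Omega_1$ the local $\Lambda$-cone property of $U_\ell$ near $u$ becomes $U^\ast_{\ell'} \cap B(0,\alpha(\ell)) = C^\ast_{\ell'} \cap B(0,\alpha(\ell))$ for some $\Lambda$-cone $C^\ast_{\ell'} \subset K$; by Lemma \ref{cones1}, $C^\ast_{\ell'}$ is a finite union of cosets of some $P_n$ and hence clopen in $K^\times$. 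For $z \in C_u^\Lambda(U)$ of direction $\ell$, writing $z = z_d(\ell',1)$, one checks, using the clopen and $\Lambda$-stable structure of $C^\ast_{\ell'}$, that the conclusion ``$u+\lambda z \in \overline{U_\ell}$ for all $\lambda \in \Lambda$ with $\ord(\lambda z) \geq \alpha(\ell)$'' is equivalent to the single condition $z_d \in C^\ast_{\ell'}$, and hence to $z \in C_u^\Lambda(U_\ell)$.

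It will therefore suffice to find a dense open $\Omega \subset \Omega_1$ on which every tangent vector $z \in C_u^\Lambda(U)$ of direction $\ell$ already lies in $C_u^\Lambda(U_\ell)$, the reverse containment being trivial. To produce $\Omega$ I would apply cell decomposition (Theorem \ref{thm:CellDecomp}) to $U^\ast$ as a family over $K^{d-1}$ with $t$ as the special variable, and restrict to a dense open subset of each cell base on which the center, coset and boundary data vary analytically; then I would let $\Omega$ be the intersection of $\Omega_1$ with all of these dense opens. Given such an $\ell$ and an approximating sequence $z_i = \Phi(t_i,\ell'_i) \in U$, $\lambda_i \in \Lambda$ for $z$ with $t_i \to 0$, $\ell'_i \to \ell'$ and $\lambda_i t_i \to z_d$, the openness of $U^\ast$ combined with the uniform cell structure near $\ell'$ should permit passing to a subsequence along which $(t_i,\ell') \in U^\ast$, yielding an approximating sequence for $z$ entirely on the line of direction $\ell$ and hence $z \in C_u^\Lambda(U_\ell)$.

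The hard part will be precisely this subsequence extraction: the largest $n$ with $B((t_i,\ell'_i),n) \subset U^\ast$ may a priori shrink faster than $\ord(\ell'_i - \ell')$ grows, in which case the naive replacement $(t_i,\ell'_i) \mapsto (t_i,\ell')$ leaves $U^\ast$. The role of the cell decomposition and the restriction to $\Omega$ is to control this openness exponent uniformly in terms of the cell data (centers and boundary functions), so that the required inequality $\ord(\ell'_i - \ell') \geq n_i - \ord(t_i)$ holds for cofinally many $i$; the case where $(t_i,\ell'_i)$ approaches a face of its cell depends similarly on the analytic variation of the cell data over $\Omega$.
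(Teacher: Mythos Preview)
Your approach is essentially the paper's: it too introduces the incidence set parametrizing the line sections (written there as $X=\{(x,t)\in (K^d\setminus\{0\})\times K:\ u+tx\in U\}$ rather than in a fixed affine chart), applies cell decomposition with the line parameter $t$ as the special variable, and discards the directions where the cell data fail to be continuous. The one packaging difference is that the paper states the goal as the equality of local $\Lambda$-cones $C_u^\Lambda(U)\cap\ell\cap B(0,\alpha(\ell))=\overline{(\pi_u^U)^{-1}(\ell)}\cap B(0,\alpha(\ell))$ for almost all $\ell\in\Omega_1$ and handles the hard inclusion by a trichotomy on whether $0$ lies in the interior, boundary, or exterior of the fiber $X_x$; the boundary case is exactly your ``hard part,'' which the paper dispatches (just as tersely) by invoking continuity of the cell centers and boundary functions rather than by an explicit subsequence extraction and openness-exponent estimate.
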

\begin{proof}
We assume $u=0$ for
simplicity. For any $\ell\in \Omega$,
any $z\in \ell \cap  C_u^{\Lambda}(U)$ and any $\lambda\in \Lambda$,
one has $\lambda z\in  C_u^{\Lambda}(U)$. Hence what remains to be proved is
a consequence of the inclusion $\subset$ in the equality of  the
following claim.
\end{proof}

\begin{claim}
For almost all $\ell$ in $\Omega_1$ and with $u=0$,
 one has the following equality of local $\Lambda$-cones
$$
C_u^{\Lambda} (U)\cap \ell \cap  B (0, \alpha (\ell))   =
(\overline{\pi_u^{U})^{-1} (\ell)} \cap B (0, \alpha (\ell)).
$$
\end{claim}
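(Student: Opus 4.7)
The plan hinges on the following reduction: by Remark \ref{remarque sur les cones locaux} applied inside the one-dimensional line $\ell$, for each $\ell\in\Omega_1$ one has
\[
\overline{(\pi_u^U)^{-1}(\ell)}\cap B(0,\alpha(\ell))\;=\;C_0^\Lambda(U\cap\ell)\cap B(0,\alpha(\ell)),
\]
since $\Lambda$ is adapted to $(U,0)$ and hence $U\cap\ell\cap B(0,\alpha(\ell))$ is a local $\Lambda$-cone with origin $0$ inside $\ell$. Thus the claimed equality is equivalent, inside $B(0,\alpha(\ell))$, to the ``slicing commutes with tangent cone'' identity
\[
C_0^\Lambda(U)\cap\ell\;=\;C_0^\Lambda(U\cap\ell),
\]
which I then verify for $\ell$ in a dense open definable subset of $\Omega_1$.

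The inclusion $\supset$ holds for every $\ell\in\Omega_1$ by monotonicity of $C_0^\Lambda(\cdot)$ under $U\cap\ell\subset U$ together with the obvious $C_0^\Lambda(U\cap\ell)\subset\ell$. For the reverse inclusion, fix $w\in C_0^\Lambda(U)\cap\ell\cap B(0,\alpha(\ell))$ with $w\ne 0$ (the case $w=0$ follows from $0\in\overline{U\cap\ell}$). Since $(w,0)\in\overline{\cD(U,0,\Lambda)}$, the Curve Selection Lemma \ref{curve} provides a definable analytic arc $(z(t),\lambda(t))$ with $z(t)\to w$, $\ord\lambda(t)\to\infty$, and $\lambda(t)z(t)\in U$; the resulting arc $v(t):=\lambda(t)z(t)$ lies in $U$, tends to $0$, and has $\pi_0(v(t))\to\ell$. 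The task is to ``straighten'' $v(t)$ onto the line $\ell$ while staying in $U$, thereby producing a sequence in $U\cap\ell$ converging to $w$.

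To carry this out I apply cell decomposition (Theorem \ref{thm:CellDecomp}) to $U$ and reduce to the case of a single cell, since both sides of the desired identity are additive under finite partitions. After a definable linear change of coordinates making $\ell$ the $x_1$-axis, the cell is described by the remaining coordinates $(x_2,\ldots,x_d)$ belonging to a cell $D\subset K^{d-1}$ and by explicit analytic bounds on $x_1$ depending on $(x_2,\ldots,x_d)$. Straightening $v(t)$ onto $\ell$ then amounts to forcing the transverse coordinates $x_2,\ldots,x_d$ to zero; by the Jacobian Property (Proposition \ref{jacprop}) applied to the cell's boundary data in these transverse coordinates, this operation remains inside the cell provided the relevant partial derivatives are nondegenerate, a condition that is generic in $\ell$.

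The main obstacle is the control of the definable exceptional set $E\subset\Omega_1$ on which this genericity fails: one must show $\dim E<\dim\Omega_1=d-1$. Invoking Lemma \ref{cones1}, both $C_0^\Lambda(U)\cap\ell$ and $\overline{U\cap\ell}\cap B(0,\alpha(\ell))$ are finite unions of $\Lambda$-cosets inside $\ell$, and $E$ is exactly the set of $\ell$ where these two finite unions of cosets differ. The maximality assumption $\dim C_0^\Lambda(U)=d$ from Lemma \ref{nonempty2}, combined with the genericity of the Jacobian Property and a careful parameter count on the cell's defining analytic functions parametrized by the direction $\ell$, is what forces the strict inequality $\dim E<d-1$; this dimension estimate is the core technical step of the proof.
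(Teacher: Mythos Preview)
Your reduction via Remark \ref{remarque sur les cones locaux} and the verification of the inclusion $\supset$ are correct and match the paper. The difficulty lies entirely in the inclusion $\subset$, and here your argument has a genuine gap.

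The problem is the step ``after a definable linear change of coordinates making $\ell$ the $x_1$-axis, the cell is described by the remaining coordinates $(x_2,\ldots,x_d)$ belonging to a cell $D\subset K^{d-1}$ and by explicit analytic bounds on $x_1$ depending on $(x_2,\ldots,x_d)$.'' This is false: a cell in the original coordinates is in general not a cell after a linear change of coordinates that depends on $\ell$. So you cannot speak of ``the cell's boundary data in these transverse coordinates'' and there is nothing for the Jacobian Property to act on. If instead you redo the cell decomposition in the $\ell$-adapted coordinates, the decomposition itself now depends on $\ell$, and you lose the uniformity needed to bound the exceptional set $E$. The invocation of Proposition \ref{jacprop} is therefore unmotivated, and the closing paragraph (``maximality assumption \ldots\ combined with the genericity of the Jacobian Property and a careful parameter count'') is not an argument but a description of what you would like to be true.

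The paper's approach avoids this entirely by parametrizing the lines from the start. One forms the incidence set
\[
X=\{(x,t)\in (K^d\setminus\{0\})\times K \mid t\cdot x\in U\},
\]
so that the fiber $X_x$ encodes $U\cap\ell$ for the line $\ell$ through $x$. A single cell decomposition of $X$ with $t$ as the special variable then gives a description of $U\cap\ell$ that is uniform in the direction $x$: the centers and the bounds $|\alpha|,|\beta|$ are definable functions of $x$. The inclusion $\subset$ now follows from the almost-everywhere continuity of these functions in $x$, which immediately bounds the exceptional set of directions by a set of dimension $<d-1$. No curve straightening and no Jacobian Property are needed.
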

\begin{proof}[Proof of the claim]
Since $\ell\in \Omega_1$,
$ (\pi_u^U)^{-1}(\ell)\cap B(0,\alpha(\ell))$ is a local $\Lambda$-cone
with origin $0$, and by Remark
\ref{remarque sur les cones locaux}, we have
$$
\overline{(\pi_u^U)^{-1}(\ell)} \cap B(0,\alpha(\ell))
=C_u^\Lambda( (\pi_u^U)^{-1}(\ell)) \cap B(0,\alpha(\ell))
$$
$$ \subset C_u^{\Lambda} (U)\cap \ell \cap  B (0, \alpha (\ell)) .$$
\noindent
The inclusion $\supset$ is thus clear for all $\ell\in \Omega$.
We prove the inclusion $\subset$ in the claim, for almost all $\ell$.
 This follows from cell decomposition. Let $X\subset
(K^d\setminus\{0\})\times K$ be the definable set
$$
\{(x,t)\in (K^d\setminus\{0\})\times K\mid u+ t\cdot x\in U\}, $$
 parametrizing all $\ell\cap U$ for all lines $\ell$ through $u$.
 Then $X$ is a finite union of cells by Theorem
 \ref{thm:CellDecomp}. For each $x\in K^d\setminus\{0\}$ write $X_x$
 for the fiber above $x$ under the projection $X\to K^d$. For each
 $x$, either $0$ lies in the interior of $X_x$, either $0$ lies in
 the boundary  $\partial X_x$ of $X_x$ or $0$ lies outside the closure of $X_x$, where $\partial X_x$
 is the closure of $X$ minus the interior of $X$. In the case that $0$ lies in the interior of
 $X_x$, one has that $(\pi_u^U)^{-1} (\ell) \cap B (u, \alpha
 (\ell))= B (u, \alpha
(\ell))$ hence the inclusion $\subset$ is evident.

 The inclusion $\subset$ holds, up to a  set of direction $\ell\in
\PP^{d-1}(K)$ of dimension $<d-1$, for
 all those $x$ such that $0$ lies in $\partial X_x$ by the almost
 everywhere continuity of the functions in $x$ appearing in the
 descriptions of the cells having $0$ in their boundary. The case
 that $0$ lies outside the closure of $X_x$ needs not to be
 considered since we suppose $\ell\in\Omega_1$.
\end{proof}

\begin{cor}\label{distinguished cone for open sets}
Let $d>0$, let $U$ be a definable nonempty open subset of $K^d$ and
let $\Lambda$  given by Corollary \ref{localconic}.
Then for all $u\in \overline U$,
with $C_u^\Lambda(U)$ of maximal dimension $d$,
$C_u^{\Lambda}(U)$ is a distinguished $\Lambda$-tangent cone at $u$ for $U$,
that is, for all $\Lambda'\in \cD$,
$\Lambda'\subset \Lambda$ implies $C_u^{\Lambda'}(U)=C_u^{\Lambda}(U)$.
\end{cor}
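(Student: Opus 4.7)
The plan is to prove the nontrivial inclusion $C_u^{\Lambda}(U) \subset C_u^{\Lambda'}(U)$; the reverse inclusion has already been recorded just before the statement. Assume $u = 0$ for notational convenience. Since $C_0^{\Lambda'}(U)$ is closed, and since by Lemma~\ref{nonempty2} the set $C_0^{\Lambda,\Omega}(U)$ is dense in $C_0^{\Lambda}(U)$ when $\Omega$ is the dense open subset of $\Omega_1$ supplied by Lemma~\ref{smallU}, it is enough to show $C_0^{\Lambda,\Omega}(U) \subset C_0^{\Lambda'}(U)$. The maximal-dimension hypothesis enters precisely here, as it is needed in order for Lemmas~\ref{nonempty}, \ref{nonempty2} and \ref{smallU} to apply.

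Now fix $z \in C_0^{\Lambda,\Omega}(U)$ with direction $\ell = \pi_0(z)$, and parametrize the line $\ell$ by $t \mapsto tz$, putting $T := \{t \in K : tz \in U\}$. By Lemma~\ref{smallU}, for every $\lambda \in \Lambda$ with $\ord(\lambda z) \geq \alpha(\ell)$ one has $\lambda z \in \overline{\ell \cap U} = \overline{Tz}$, equivalently $\lambda \in \overline T$. The key additional input is the openness of $\Lambda'$: being an open subgroup of finite index in $K^\times$, $\Lambda'$ contains a neighborhood $1 + \cM_K^N$ of $1$ for some $N$, and hence for any $\lambda \in \Lambda'$ the coset $\lambda\Lambda'$ contains the neighborhood $\lambda + \lambda \cM_K^N$ of $\lambda$. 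Taking $\lambda \in \Lambda' \subset \Lambda$ with $\ord(\lambda z) \geq \alpha(\ell)$ and then choosing $t \in T$ close enough to $\lambda$ that $t \in \lambda + \lambda \cM_K^N$ produces an element $t \in T \cap \Lambda'$; letting $\lambda$ run through a sequence in $\Lambda'$ tending to $0$ gives such $t$ arbitrarily close to $0$.

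Producing in this way a sequence $t_k \in T \cap \Lambda'$ with $t_k \to 0$, I set $z_k := t_k z \in U$ and $\lambda'_k := t_k^{-1} \in \Lambda'$; then $z_k \to 0$ and $\lambda'_k z_k = z$ for every $k$, which exhibits $z \in C_0^{\Lambda'}(U)$ and concludes the proof. The main obstacle I anticipate is the bookkeeping around Lemma~\ref{smallU}: one must make sure that the ``almost all $\ell$'' asserted in the claim inside its proof is compatible with the set $\Omega$ chosen, so that the equality $\overline{\ell \cap U} \cap B(0,\alpha(\ell)) = C_0^\Lambda(U) \cap \ell \cap B(0,\alpha(\ell))$ really holds for every $\ell$ in the direction set of $C_0^{\Lambda,\Omega}(U)$. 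Beyond this, the argument reduces to the topological observation that $\Lambda'$ is open in $K^\times$.
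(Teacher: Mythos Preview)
Your proof is correct and follows the same overall architecture as the paper's: reduce to directions $\ell\in\Omega$ via Lemma~\ref{smallU}, establish the inclusion $C_u^{\Lambda,\Omega}(U)\subset C_u^{\Lambda'}(U)$ line by line, and then pass to the full cone by density (Lemma~\ref{nonempty2}).

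The one place where you diverge is the one-dimensional step. The paper observes that $U\cap\ell\cap B(0,\alpha(\ell))$ is itself a local $\Lambda$-cone (this is precisely what ``$\Lambda$ adapted'' means) and then invokes Remark~\ref{remarque sur les cones locaux} to conclude $C_u^{\Lambda}(U\cap\ell)=C_u^{\Lambda'}(U\cap\ell)$. You instead use only the conclusion of Lemma~\ref{smallU}, namely $\lambda\in\overline{T}$ for small $\lambda\in\Lambda$, together with the openness of $\Lambda'$ in $K^\times$ to perturb each such $\lambda\in\Lambda'$ to a nearby $t\in T\cap\Lambda'$. Both arguments are short and essentially equivalent; yours is slightly more self-contained in that it does not appeal back to Remark~\ref{remarque sur les cones locaux}, while the paper's makes more visible why the adaptedness hypothesis on $\Lambda$ is exactly what is needed.

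Regarding your stated worry about the ``almost all $\ell$'' in the Claim inside the proof of Lemma~\ref{smallU}: this is not an issue for you. You only use the one-sided containment $\lambda z\in\overline{(\pi_u^U)^{-1}(\ell)}$ for small $\lambda\in\Lambda$, and that is exactly the statement of Lemma~\ref{smallU}, which holds for \emph{every} $z\in C_u^{\Lambda,\Omega}(U)$ by the choice of $\Omega$. The ``almost all'' concerns the reverse inclusion in the Claim, which you never invoke.
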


\begin{proof}
As usual we assume $u=0$.
Let $\Lambda$ as given by Corollary \ref{localconic} and
$\Lambda'\in \cD$, $\Lambda'\subset \Lambda$.
We show that $C_u^{\Lambda}(U)\subset C_u^{\Lambda'}(U)$.
Let $z\in C_u^{\Lambda}(U)$, denote by $\ell$
its direction and assume that $\ell \in \Omega$, with
$\Omega\subset \Omega_1$ as in Lemma \ref{smallU}.
By Lemma \ref{smallU} we then have
$z\in C_u^\Lambda(\overline{U\cap \ell})= C_u^\Lambda(U\cap \ell )$.
 But since
$U\cap \ell\cap B(0,\alpha(\ell))$ is a local $\Lambda$-cone with origin
$u$, by Remark
\ref{remarque sur les cones locaux}, we get
$$z \in C_u^{\Lambda}(U\cap \ell)
=C_u^{\Lambda'}(U\cap \ell)\subset
C_u^{\Lambda'}(U).$$
Now since we showed
$C_u^{\Lambda,\Omega}(U)=C_u^{\Lambda',\Omega}(U)$, we have
$\overline{C_u^{\Lambda,\Omega}(U)}=\overline{C_u^{\Lambda',\Omega}(U)}$.
But by Lemma \ref{nonempty2} we obtain $\overline{C_u^{\Lambda,\Omega}(U)}
=C_u^{\Lambda}(U)$. We finally remark that one also has
$\overline{C_u^{\Lambda',\Omega}(U)}
=C_u^{\Lambda'}(U)$, with the same proof as in Lemmas \ref{nonempty}
and \ref{nonempty2}, since any adapted (to $U$) $\Lambda'$-cone may be
chosen in those lemmas.
\end{proof}

 Proposition \ref{3.6} below
can be seen as an analogue of Proposition 3.6 of \cite{KR} and has for
consequence the existence of distinguished $\Lambda$-tangent cones for
general definable sets and the $p$-adic analogue of Thie's formula.
As usual, the main point in the $p$-adic case is to overcome the lack of
connectedness and
deal with all its negative consequences such as the lack of
strong enough
mean value theorems
 and so forth. To go through these difficulties we essentially
use the following result, the main result of \cite{CCL}, which is the
$p$-adic analogue of the existence of the so-called $L$-decompositions
of real subanalytic sets, obtained in \cite{KurWExp}, and which will be used in the proof of Proposition \ref{3.6} (c).

\begin{theorem}
\label{Lipschitz decomposition}
Let $\varepsilon>0$ and let $\varphi:X\subset K^n\to K^m$ be a
locally $\varepsilon$-Lipschitz definable mapping. Then there exist
$C>0$ and a finite definable partition of $X$ into parts $X_1, \cdots, X_k$,
such that the restriction of  $\varphi$ to each $X_i$ is globally $C$-Lipschitz.
\end{theorem}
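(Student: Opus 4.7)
The plan is to induct on $d = \dim X$. The base case $d = 0$ is immediate: $X$ is then a finite definable set and singletons work trivially. For $d > 0$, I first apply the $p$-adic Cell Decomposition Theorem \ref{thm:CellDecomp} to the components of $\varphi$, obtaining a finite partition of $X$ into cells on which $\varphi$ is analytic; cells of dimension $< d$ are absorbed by the inductive hypothesis, so it suffices to treat a single cell $X$ of dimension $d$ on which $\varphi$ is analytic.

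Next, adapting the reduction strategy of Proposition \ref{1.4} via cell decomposition, the Jacobian Property (Proposition \ref{jacprop}) and $\GL_n(R)$-coordinate changes (which distort the Lipschitz constant by at most a unit factor), one reduces to the case where $X$ is the graph of a definable analytic mapping over a definable open $U \subset K^d$ with isometric projection onto $U$. Then $\varphi$ corresponds to a definable analytic $\tilde\varphi : U \to K^m$ which is locally $\varepsilon$-Lipschitz, and Lemma \ref{ana-eps} implies $\Vert D\tilde\varphi \Vert \leq \varepsilon$ pointwise on $U$. I then further partition $U$ (via cell decomposition) into finitely many pieces, each contained in a basic ball $c_j + \lambda_j R^d$ on which the Taylor expansion of $\tilde\varphi$ centered at $c_j$ converges. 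On each such piece, writing $\tilde\varphi(c_j + \lambda_j y) = \sum_J a_J^{(j)} y^J$ for $y \in R^d$, the $p$-adic Gauss-norm identity ensures $N_j := \sup_J |a_J^{(j)}| < \infty$, and the elementary ultrametric estimate $|y^J - (y')^J| \leq |y - y'|$ valid on $R^d$ yields directly that $\tilde\varphi$ is globally $(N_j/|\lambda_j|)$-Lipschitz on the piece. Taking $C$ to be the maximum of these finitely many constants, and pulling back to $X$ through the isometry, completes the argument.

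The main obstacle lies in the final partitioning step: one must guarantee that $U$ decomposes into \emph{finitely many} definable pieces, each contained in a basic ball on which the power series of $\tilde\varphi$ converges with controlled coefficient norm. This finiteness is a consequence of definability combined with iterated applications of Proposition \ref{jacprop} to peel off one coordinate at a time and bring each cell fiber into the form $c + \lambda R$. In this way the $p$-adic ultrametric inequality together with the Gauss-norm identity replace the role of the mean value theorem that underpins the corresponding real subanalytic argument of \cite{KurWExp}; once the uniform coefficient bounds are available on a finite partition, the global Lipschitz estimate is automatic.
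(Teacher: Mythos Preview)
The paper does not give its own proof of this theorem: it is quoted as ``the main result of \cite{CCL}'' and used as a black box. So there is no in-paper argument to compare against; the relevant question is whether your sketch stands on its own.

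It does not. The crucial step is the one you flag yourself: partitioning $U$ into \emph{finitely many} definable pieces, each contained in a basic ball $c_j+\lambda_j R^d$ on which $\tilde\varphi$ is given by a single convergent power series. Neither cell decomposition nor Proposition~\ref{jacprop} delivers this. A $1$-cell fiber has the shape $\{t:\ |\alpha|\,\square_1\,|t-c|\,\square_2\,|\beta|,\ t-c\in\lambda P_n\}$, which is typically an annular region containing \emph{infinitely many} maximal balls; Proposition~\ref{jacprop} only asserts the Jacobian property on each such ball separately, not that the fiber can be brought to the form $c+\lambda R$. Your phrase ``peel off one coordinate at a time and bring each cell fiber into the form $c+\lambda R$'' is exactly the statement that needs proof, and it is false as stated. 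Consequently the finiteness of the family $\{(c_j,\lambda_j)\}$ is unjustified, and without it the maximum of the constants $N_j/|\lambda_j|$ need not exist.

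The genuine difficulty---and the reason this is the main theorem of a separate paper---is precisely to control $|\tilde\varphi(x)-\tilde\varphi(x')|$ when $x$ and $x'$ lie in \emph{different} balls of the same cell (or in different cells). The Jacobian property handles the within-ball comparison; the across-ball comparison requires a further, substantially harder, argument that your sketch does not supply. The ultrametric power-series estimate you invoke is correct once one is on a single ball with a convergent expansion, but getting to that situation with only finitely many pieces is the whole content of the theorem.
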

We will also use Theorem \ref{wf} in the proof of
Proposition \ref{3.6}
in the same way that Lemma 3.7 of \cite{KR} is used in the proof of
Proposition 3.6 of \cite{KR}. Where Theorem \ref{wf}
gives the existence of $(w_f)$-regular (and consequently $(a_f)$, $(b)$, and
$(w)$-regular) stratifications for a function
$f$ in the definable $p$-adic setting,
we will only use the genericity
of the condition $(a_f)$ in the $p$-adic definable case to prove Proposition \ref{3.6}.

\begin{prop}\label{3.6}
Let $\varepsilon$ be a positive real number with $\varepsilon\leq 1$.
Let $U$ be an open definable subset of $K^d$ and let $\varphi$ be a
definable  mapping $U \rightarrow K^{m - d}$. Fix a point $u$ in
$\overline U$, a subgroup $\Lambda$ adapted to $(U, u)$, choose $\Omega$
sufficiently small
and as
in Lemma \ref{smallU},
and let $C_u^{\Lambda, \Omega} (U)$ be as in
\textup{(}\ref{CO}\textup{)}. Assume that $\varphi$ is $\varepsilon$-analytic
and that  $\displaystyle\lim_{x \to u}\varphi (x) = v$
by Corollary \ref{cor1.8KR}.

Suppose that $C_u^{\Lambda} (U)$ has maximal dimension. Then,
possibly after partitioning $U$ into finitely many open subsets, replacing
$U$ successively by each one of these smaller open subsets, in such a way
that $\varphi$ is globally $C$-Lipschitz on $U$ by Theorem \ref{Lipschitz
decomposition} and neglecting
those $U$ such that $C_u^{\Lambda} (U)$ has lower dimension, the
following hold
\begin{enumerate}
\item[(a)]
For $z$ in $C_u^{\Lambda, \Omega} (U)$ such that $u+\lambda z\in U$,
for all small enough $\lambda\in \Lambda$ \textup{(}see Lemma \ref{smallU}\textup{)},
the limit
$$
\psi (z) := \lim_{\sur{\lambda \to  0}{\lambda \in \Lambda}}
\lambda^{-1} (\varphi (u + \lambda z) - v)
$$
exists in $K^{m-d}$, yielding a definable function
$\psi:C_u^{\Lambda, \Omega} (U)\to K^{m-d}$.

\item[(b)]The function $\psi$ is locally $\varepsilon$-Lipschitz.

\item[(c)] The graph of $\psi$ is dense in
$C_w^{\Lambda}(\Gamma(\varphi))$.

%
%
%
\end{enumerate}
\end{prop}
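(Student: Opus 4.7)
The plan is to establish (a), (b), (c) in turn; part (a) carries the main technical difficulty.

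For (a), after invoking Theorem \ref{Lipschitz decomposition} as in the statement, I may assume $\varphi$ is globally $C$-Lipschitz on $U$. Since $\lim_{x\to u}\varphi(x)=v$, extending $\varphi$ continuously by $\varphi(u):=v$ yields $|\varphi(u+\lambda z)-v|\le C|\lambda z|$ for $\lambda\in\Lambda$ small (using Lemma \ref{smallU} to ensure $u+\lambda z\in U$ for such $\lambda$). Consequently
$$
h_z(\lambda):=\lambda^{-1}\bigl(\varphi(u+\lambda z)-v\bigr)
$$
is a definable function, bounded by $C|z|$ in norm, defined on a punctured neighbourhood of $0$ in $\Lambda$. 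To extract a single limit, I would apply Theorem \ref{thm:CellDecomp} to partition $\Lambda$ according to cosets of some $P_n$ contained in $\Lambda$; on each such coset $\mu P_n$, parameterised by $\tau\mapsto\mu\tau^n$, the function $h_z$ becomes analytic and bounded in $\tau$, so admits a limit as $\tau\to 0$. The hard part is showing that these per-coset limits agree—i.e., do not depend on the coset $\mu P_n$—for $\Lambda$ sufficiently small; I would use the $\varepsilon$-analytic estimate to compare $h_z(\lambda_1)$ and $h_z(\lambda_2)$ for $\lambda_1,\lambda_2$ in distinct cosets of small enough order, invoking the genericity of $(a_f)$-regular stratifications (Theorem \ref{wf}) applied to the function $\varphi-v$ to obtain the needed coherence of tangent behaviour near $u$. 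Definability of $\psi$ then follows from the explicit definable formula defining its graph.

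Part (b) is a direct calculation. Given $z,z'\in C_u^{\Lambda,\Omega}(U)$ close enough, for $\lambda\in\Lambda$ small the points $u+\lambda z$ and $u+\lambda z'$ lie in a common small ball on which $\varphi$, being $\varepsilon$-analytic, is $\varepsilon$-Lipschitz (Lemma \ref{ana-lip}). Hence
$$
\bigl|\lambda^{-1}\bigl(\varphi(u+\lambda z)-\varphi(u+\lambda z')\bigr)\bigr|\le\varepsilon|z-z'|,
$$
and passing to the limit using (a) gives $|\psi(z)-\psi(z')|\le\varepsilon|z-z'|$ locally.

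For (c), one inclusion is direct: given $z\in C_u^{\Lambda,\Omega}(U)$, pick $\lambda_n\in\Lambda$ with $\lambda_n\to 0$; by Lemma \ref{smallU} we have $x_n:=u+\lambda_n z\in U$, and the sequence $\lambda_n^{-1}((x_n,\varphi(x_n))-(u,v))=(z,h_z(\lambda_n))\to(z,\psi(z))$ witnesses $(z,\psi(z))\in C_w^\Lambda(\Gamma(\varphi))$. For density I would show that the fibre of $C_w^\Lambda(\Gamma(\varphi))$ above any $z\in C_u^{\Lambda,\Omega}(U)$ is the singleton $\{\psi(z)\}$: given $(z,y)$ in this fibre with witnessing sequences $(x_n,\lambda_n)$, set $\mu_n:=\lambda_n^{-1}$ and $z_n:=\lambda_n(x_n-u)\to z$, so that $x_n=u+\mu_n z_n$. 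The global $C$-Lipschitz property then yields
$$
\bigl|\mu_n^{-1}\bigl(\varphi(u+\mu_n z_n)-v\bigr)-\mu_n^{-1}\bigl(\varphi(u+\mu_n z)-v\bigr)\bigr|\le C|z_n-z|\to 0,
$$
so $y=\lim_n\mu_n^{-1}(\varphi(u+\mu_n z)-v)=\psi(z)$. Density of $\Gamma(\psi)$ in $C_w^\Lambda(\Gamma(\varphi))$ then follows from Lemma \ref{nonempty2}, which ensures that $C_u^{\Lambda,\Omega}(U)$ is dense in $C_u^\Lambda(U)$, the image of $C_w^\Lambda(\Gamma(\varphi))$ under the projection to $K^d$.
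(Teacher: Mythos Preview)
Your proposal has the right overall shape, and part (c) is essentially the paper's argument (the final appeal to Lemma~\ref{nonempty2} alone is not quite enough---you still need the global $C$-Lipschitz estimate to approximate points of $C_w^\Lambda(\Gamma(\varphi))$ lying over $C_u^\Lambda(U)\setminus C_u^{\Lambda,\Omega}(U)$, which is exactly the computation you already carried out over $\Omega$---but this is minor). Parts (a) and (b), however, each contain a real gap, and the use of Theorem~\ref{wf} is placed in the wrong part.

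For (a), the unresolved step is precisely the one you call ``the hard part'': showing that the per-coset limits of $h_z$ coincide. Your appeal to $(a_f)$-regularity is too vague to be a proof; you do not say with respect to which map the stratification is to be taken, nor how a statement about limits of tangent spaces to fibres would force the scalar limits $\lim_{\tau\to 0} h_z(\mu\tau^n)$ to agree for different $\mu$. The paper does not use Theorem~\ref{wf} here at all. Instead it partitions $U$ via Lemma~\ref{terms} so that $\varphi$ is given by a single $\cL^\ast$-term on each piece; for such a term the function $\lambda\mapsto h_z(\lambda)$, $\lambda\in\Lambda$, either has a limit as $\lambda\to 0$ or has norm tending to infinity, and the second alternative is ruled out by applying Curve Selection (Lemma~\ref{curve}) and the $p$-adic Whitney Lemma~\ref{lemWhitney} to a curve in $\{(u+\lambda z,\varphi(u+\lambda z))\}$, comparing the limit of secant directions (which would be vertical if $|h_z|\to\infty$) with the limit of tangent directions (which lies in the $\varepsilon$-cone by $\varepsilon$-analyticity).

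For (b), your direct argument needs that for small $\lambda$ the points $u+\lambda z$ and $u+\lambda z'$ lie in a common ball on which $\varphi$ is $\varepsilon$-Lipschitz. But Lemma~\ref{ana-lip} only yields a local $\varepsilon$-Lipschitz neighbourhood whose radius depends on the base point, and nothing prevents this radius at $u+\lambda z$ from being $o(|\lambda|)$ as $\lambda\to 0$ (recall $u$ typically lies on $\partial U$). Thus your estimate $|h_z(\lambda)-h_{z'}(\lambda)|\le\varepsilon|z-z'|$ need not hold for small $\lambda$; only the global $C$-Lipschitz bound survives the limit, which gives $C$-Lipschitz for $\psi$ but not $\varepsilon$-Lipschitz. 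It is exactly here that the paper invokes Theorem~\ref{wf}: one considers the deformation $h:\overline{\cD(\Gamma(\varphi),w,\Lambda)}\to K$, whose nonzero fibres $h^{-1}(\lambda)$ are copies of $\Gamma(\varphi)$ and hence have tangent spaces contained in $C_\varepsilon=\{(a,b):|b|\le\varepsilon|a|\}$; genericity of the $(a_h)$ condition then forces the tangent spaces of the special fibre $C_w^\Lambda(\Gamma(\varphi))$---hence of $\Gamma(\psi)$, which lies in it with the same dimension---to be limits of these, so they too lie in $C_\varepsilon$, and $\psi$ is $\varepsilon$-analytic.
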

\begin{proof}
We first prove (a). Choose
$z$ in $C_u^{\Lambda, \Omega} (U)$ such that $u+\lambda z\in U$,
for all small enough $\lambda\in \Lambda$. We can
evaluate $\varphi$ at $u+\lambda z$
for small enough $\lambda$ in $\Lambda$. After partitioning $U$ into
finitely many open subsets and successively replacing $U$ by each one of
these smaller open subsets, Lemma \ref{terms} implies that
when $\lambda \to 0$,  $\lambda \in \Lambda$, either
$ \lambda^{-1}
(\varphi (u + \lambda z) - v)$
has a limit $\psi (z)$ or its norm goes to $\infty$.
 Applying the Curve Selection Lemma \ref{curve} to the point
$(u,v)$ and the set
$$\{(u+\lambda z,\varphi (u + \lambda z) )\in
K^m\mid \lambda\in \Lambda\},$$ it follows from Lemma \ref{lemWhitney} and
$\varepsilon$-analyticity that the limit $\psi (z)$ exists.

Now assume that $z\in C_u^{\Lambda, \Omega} (U)$ is such that
$u+\lambda z\in \overline{U\cap \ell}$, where $\ell$ is the (direction
of the) line going through $u$ and $z$.
By Lemma \ref{smallU}, for all $\varepsilon>0 $ there exist $z',z''\in
C_u^{\Lambda, \Omega} (U)$ such that $u+\lambda z'\in U$,
$u+\lambda z''\in U$
for all $\lambda\in \Lambda$ and $\vert z-z'\vert \le \varepsilon$ and
$\vert z-z''\vert \le \varepsilon$. Then we have
$$\vert \lambda^{-1} [ (\varphi(u+\lambda z')-v)-
(\varphi(u+\lambda z'')-v) ]\vert \le C\vert \lambda^{-1}
 \lambda(z'-z'')\vert \le C\cdot \varepsilon.$$
 This shows that one can define $\psi$ on $C_u^{\Lambda,\Omega}$ by
\begin{equation}\label{psizz'}
\psi(z)=\displaystyle \lim_{z'\to z}\lim_{\lambda\to 0}\lambda^{-1}
(\varphi(u+\lambda z')-v).
\end{equation}

Let us now prove  (b).
We first notice that, after
suitable finite partition of $U$ and neglecting those $U$ such that
$C_u^{\Lambda} (U)$ has lower dimension, we may suppose that the
function $\psi$ is analytic on $C_u^{\Lambda, \Omega} (U)$.
To prove that $\psi$ is $\varepsilon$-analytic on
$C_u^{\Lambda, \Omega} (U)$,  we show that the tangent space
$T_x\Gamma(\psi)$ at a point $x$ of the graph $\Gamma(\psi)$ of $\psi$,
for $x$ in a dense set of $\Gamma(\psi)$, is contained in
$C_\varepsilon=\{(a,b)\in K^d\times K^{m-d};
\vert b\vert \le \varepsilon \vert a\vert\}$. Since,
by (\ref{psizz'}),
$\Gamma(\psi)\subset C_w^{\Lambda}(\Gamma(\varphi))$ and since
$\dim(\Gamma(\psi))=\dim(C_w^{\Lambda}(\Gamma(\varphi)))$,
 it is enough to
prove that at a generic point $x$ of $C_w^{\Lambda}(\Gamma(\varphi))$
one has $T_xC_w^{\Lambda}(\Gamma(\varphi))\subset
C_\varepsilon$. For this we consider the deformation
$h:\overline{{\cD}(\Gamma(\varphi),w, \Lambda)}\to K$ to
$C_w^{\Lambda}(\Gamma(\varphi))$
defined in section $3.5$. The fiber $h^{-1}(0)$ is identified with
$C_w^\Lambda(\Gamma(\varphi))$ and for $\lambda\in \Lambda$
$$h^{-1}(\lambda)=\{(z,\lambda)\in K^m\times \Lambda;
w+\lambda z\in \Gamma(\varphi)\}$$ is identified
with
$$\{z\in K^m; w+\lambda z\in \Gamma(\varphi)\}.$$
Since $\varphi$ is $\varepsilon$-analytic, for any
$\lambda\in \Lambda$ and any $y\in h^{-1}(\lambda)$,
one has $T_y h^{-1}(\lambda)\subset C_\varepsilon$.
Let us show at $x$ a generic point of $C_w^\Lambda(\Gamma(\varphi))$,
$T_xC_w^\Lambda(\Gamma(\varphi))$ is a limit of tangents
$T_{y_n} h^{-1}(\lambda_n)$. But this is exactly the genericity in $h^{-1}(0)$
of the condition $(a_h)$, which is given by Theorem \ref{wf}.
\\

We now prove (c). Let
$z\in C_w^\Lambda(\Gamma(\varphi))$
 and
$(\lambda_n)_{n\in \NN}\in \Lambda,
(w_n)_{n\in \NN}\in \Gamma(\varphi)$ be
two sequences such that $w_n\to w$ and $\lambda_n(w_n-w)\to z$.
Denoting by $\pi$ the projection from $\Gamma$ to $U$ and
$u_n=\pi(w_n)$, the sequence $(u_n)_{n\in \NN}$ of points of $U$ going
to $u$ is such that $\displaystyle\lim_{n\to \infty}\lambda_n(u_n-u)=
\pi(z):=a\in
C^\Lambda_u(U)$. Now
fix $\varepsilon>0$ and $a'\in C_u^{\Lambda,\Omega}(U)$ with
$\vert a-a' \vert\le \varepsilon $. Then $u+\lambda a'\in U $ for all small
enough $\lambda\in \Lambda$ by Lemma \ref{smallU}.
 Then we may suppose, by invoking Theorem \ref{Lipschitz decomposition}, that
$$\vert \lambda_n(\varphi(u_n)-v)-\lambda_n(
\varphi(\lambda_n^{-1}a'+u)-v) \vert \le C
\vert \lambda_n (u_n-u)-a')\vert. $$
This gives
$$ \lim_{n \to \infty}
\vert \lambda_n(w_n-w)-
\lambda_n(\lambda_n^{-1}a',\varphi(\lambda_n^{-1}a'+u)) \vert
\le \max(C,1)\cdot \varepsilon, $$
and, finally
$$
\vert z-(a',\psi(a')) \vert \le \max(C,1)\cdot \varepsilon,
$$
showing that the graph of $\psi $ is dense in
$C_w^\Lambda(\Gamma(\varphi))$.
\end{proof}
\begin{cor}\label{trivcorSC}
Under the hypotheses
and with the notation  of Proposition \ref{3.6}, assume moreover that $\varepsilon < 1$. Write $z$ for variables running over $K^{d}$ and $y$ for variables running over $K^{m-d}$.
Then, for almost all $(z,y)\in C_w^{\Lambda} (\Gamma(\varphi))$, one has that  $SC_w^{\Lambda} (\Gamma(\varphi))(z,y) = 1 = SC_u^{\Lambda} (U)(z) $, and
$$
\Theta_d (SC_w^{\Lambda} (\Gamma(\varphi))) (0)
=
\Theta_d (C_w^{\Lambda} (\Gamma(\varphi))) (0) = \Theta_d (U) (u)=
\Theta_d (\Gamma(\varphi)) (w).
$$
\end{cor}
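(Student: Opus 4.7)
The plan is to reduce everything to two separate computations of the specialization $SC$ and a chain of density comparisons, all four of which come out of Proposition \ref{3.6} combined with the open case of Theorem \ref{mt} already settled in subsection \ref{opencase}. First I would establish the four-way equality of $\Theta_d$'s as follows. Since $\varphi$ is $\varepsilon$-analytic with $\varepsilon<1$, Lemma \ref{ana-lip} makes it locally $\varepsilon$-Lipschitz, so for $x,x'\in U$ the $p$-adic max-norm gives $|(x,\varphi(x))-(x',\varphi(x'))|=|x-x'|$; the projection $\Gamma(\varphi)\to U$ is thus an isometry preserving $d$-dimensional volume, yielding $\Theta_d(\Gamma(\varphi))(w)=\Theta_d(U)(u)$. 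Applying the same argument to the dense subgraph $\Gamma(\psi)\subset C_w^\Lambda(\Gamma(\varphi))$ provided by Proposition \ref{3.6}(c), with $\psi$ locally $\varepsilon$-Lipschitz by (b), yields $\Theta_d(C_w^\Lambda(\Gamma(\varphi)))(0)=\Theta_d(C_u^\Lambda(U))(0)$. The middle equality $\Theta_d(U)(u)=\Theta_d(C_u^\Lambda(U))(0)$ is the already-proved case $d=n$ of Theorem \ref{mt} applied to the full-dimensional open set $U\subset K^d$.

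Next I would compute the multiplicities. For generic $z\in C_u^{\Lambda,\Omega}(U)$, Lemma \ref{smallU} guarantees that $u+\lambda z\in U$ for every sufficiently small $\lambda\in\Lambda$, so near $(z,0)$ the set $\cD(U,u,\Lambda)$ is locally a product $B(z,N)\times(\Lambda\cap B(0,N))$. The $(d+1)$-dimensional density of this product at $(z,0)$ factors as $\Theta_d(K^d)(z)\cdot\Theta_1(\Lambda)(0)=1\cdot[K^\times:\Lambda]^{-1}$ (using the example right after Lemma \ref{sph=ball}), and multiplying by $[K^\times:\Lambda]$ gives $SC_u^\Lambda(U)(z)=1$. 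For the graph side, the bijection $\vartheta:\cD(\Gamma(\varphi),w,\Lambda)\to\cD(U,u,\Lambda)$ has inverse $(z,\lambda)\mapsto(z,h(z,\lambda),\lambda)$ where $h(z,\lambda):=\lambda^{-1}(\varphi(u+\lambda z)-v)$, and by Proposition \ref{3.6}(a) this extends continuously to the boundary by sending $(z,0)$ to $(z,\psi(z),0)$. The goal is to show that for $n$ large, the ball condition $|h(z',\lambda)-\psi(z)|\leq q^{-n}$ is automatically satisfied whenever $|z'-z|,|\lambda|\leq q^{-n}$, so that $\vartheta$ identifies the intersection $\cD(\Gamma(\varphi))\cap B((z,\psi(z),0),n)$ with $\cD(U)\cap B((z,0),n)$ preserving the $(d+1)$-dimensional volume, giving $SC_w^\Lambda(\Gamma(\varphi))(z,\psi(z))=SC_u^\Lambda(U)(z)=1$. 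Since $\Gamma(\psi)$ has full $d$-dimensional measure in $C_w^\Lambda(\Gamma(\varphi))$ by Proposition \ref{3.6}(c), this gives $SC_w^\Lambda(\Gamma(\varphi))=1$ almost everywhere and thence $\Theta_d(SC_w^\Lambda(\Gamma(\varphi)))(0)=\Theta_d(C_w^\Lambda(\Gamma(\varphi)))(0)$.

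The hard part will be justifying the pullback of volumes across $\vartheta$ near the boundary $\lambda=0$. The issue is that the naive Jacobian calculation for $\vartheta^{-1}$ involves $\partial h/\partial\lambda$, which need not extend continuously at $\lambda=0$; the key is that the other minors of the Jacobian are governed by $\partial\varphi/\partial x$, bounded by $\varepsilon<1$, so if one can further partition $U$ (using Theorem \ref{Lipschitz decomposition} together with the $\varepsilon$-Lipschitz control of both $\varphi$ and $\psi$ afforded by Lemmas \ref{ana-lip} and \ref{ana-eps} and Proposition \ref{3.6}(b)) to make the relevant $(d+1)\times(d+1)$ minors all have norm at most $1$, then the maximum is attained by the identity block $J=\{1,\dots,d,m+1\}$ and $\vartheta^{-1}$ becomes volume-preserving. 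The hypothesis $\varepsilon<1$ is critical here: it is precisely what ensures that the graph is an isometric embedding in all the relevant coordinate directions, so that balls in the ambient $K^{m+1}$ pull back to balls in $K^{d+1}$, both at the level of $\Gamma(\varphi)$ vs $U$ and at the level of $\cD(\Gamma(\varphi))$ vs $\cD(U)$.
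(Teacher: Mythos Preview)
Your overall architecture matches the paper's, and the computation of $SC_u^\Lambda(U)(z)=1$ via the local product structure $B\times(\Lambda\cap B_1)$ is exactly right. However, there is a genuine gap in the second and third paragraphs, and a related smaller gap in the first.

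The central problem is your claim that for large $n$ the condition $|h(z',\lambda)-\psi(z)|\leq q^{-n}$ holds automatically whenever $|z'-z|,|\lambda|\leq q^{-n}$. This is false. Take $d=1$, $m=2$, $U=R$, $\varphi(x)=x^2$, $u=v=0$; then $h(z,\lambda)=\lambda z^2$ and $\psi(z)=0$, so $|h(z,\lambda)-\psi(z)|=|\lambda||z|^2$, which exceeds $|\lambda|$ whenever $|z_0|>1$. Your proposed repair via Jacobian minors cannot work either: the offending minor is exactly $\partial h/\partial\lambda=-\lambda^{-1}h+\lambda^{-1}z\cdot D\varphi(u+\lambda z)$, which has norm of order $|\lambda|^{-1}|z|$ and is not made small by any partition of $U$ or by Theorem~\ref{Lipschitz decomposition}. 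So $\vartheta^{-1}$ genuinely fails to carry balls around $(z_0,0)$ to balls around $(z_0,\psi(z_0),0)$, and is not $\mu_{d+1}$-preserving near the boundary.

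The paper sidesteps this entirely. Instead of comparing balls via $\vartheta$, it introduces $\cD':=\{(z,y,\lambda)\in\cD(\Gamma(\varphi),w,\Lambda):|y|<|z|\}$ and uses two facts: on $\cD'$ the projection $p$ to $(z,\lambda)$ satisfies $|(z,y,\lambda)|=|(z,\lambda)|$ by the ultrametric inequality, and the complement $\cD\setminus\cD'$ contributes nothing to the density at any $(z_0,y_0,0)\in C_w^\Lambda(\Gamma(\varphi))$ because Proposition~\ref{prop1.7KR} forces $|y_0|\leq\varepsilon|z_0|<|z_0|$ and a curve-selection argument shows the part of $\cD$ with $|y|\geq|z|$ cannot accumulate there. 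The same device handles the smaller gap in your first paragraph: Lemma~\ref{ana-lip} gives only \emph{local} $\varepsilon$-Lipschitz, which does not yield $|\varphi(x)-v|\leq|x-u|$ globally, so your direct isometry claim $|(x,\varphi(x))-(x',\varphi(x'))|=|x-x'|$ is unjustified; the paper instead restricts to $U'':=\{z\in U:|\varphi(z)|<|z|\}$ (where norm-to-origin is preserved tautologically), and uses Proposition~\ref{prop1.7KR} plus the open case~\ref{opencase} to show $\Theta_d(U)(u)=\Theta_d(U'')(u)$.
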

\begin{proof}

We first prove that, for almost all $z\in C_u^{\Lambda} (U)$, one has that  $1 = SC_u^{\Lambda} (U)(z) $. Let $\Omega$ and $\alpha$ be as in Lemma \ref{smallU}. By Lemma \ref{smallU}, for $z\in C_u^{\Lambda, \Omega}
(U)$, there exist an open ball $B$ contained in $C_u^{\Lambda, \Omega}
(U)$ and containing $z$, and a ball $B_1\subset K$ around $0$ such that
$$
\cD(U,u,\Lambda)\cap (B\times B_1 ) =    B\times  ( \Lambda\cap B_1 ).
$$
Hence we can calculate
$$
 SC_u^\Lambda (U)(z)
 =
 [K:\Lambda]\Theta_{d+1}(\cD (U,u,\Lambda))(z,0)=
[K:\Lambda]\Theta_{d+1}(B \times \Lambda )(z,0)=\Theta_{d}(B )(z)
$$
which equals $1$ since $z\in B$.

Next we prove that $SC_w^{\Lambda} (\Gamma(\varphi))(z,y)=1$ for almost all $(z,y)\in C_w^{\Lambda} (\Gamma(\varphi))$.
For this purpose, define
$$
\cD' := \{(z,y,\lambda)\in \cD(\Gamma(\varphi),w,\Lambda);\ |y| <  |z| \},
$$
and consider the natural projection
$$
p: \cD'  \to \cD (U,u,\Lambda): (z,y,\lambda)\mapsto (z,\lambda),
$$
which is in fact injective.
Write $U'$ for the image of $p$.
By Proposition \ref{prop1.7KR} and Lemma \ref{curve}, one finds for all $(z,y)\in K^m$ that
$$
\Theta_{d+1}( \cD'  )(z,y,0)   = \Theta_{d+1}( \cD(\Gamma(\varphi),w,\Lambda) )(z,y,0)
$$
and for almost all $z\in K^{d}$ that
$$
\Theta_{d+1}( U' )(z,0) = \Theta_{d+1}( \cD(U,u,\Lambda) )(z,0).
$$
Since for all $(z,y,\lambda)\in \cD' $ one has $|(z,y,\lambda)  | = |(z,\lambda) |$, by the bijectivity of $p:\cD'\to U'$,  and by definition of $\Theta_{d+1}$, one finds
$$
\Theta_{d+1}( \cD'  )(z,y,0) = \Theta_{d+1}( U' )(z,0).
$$
 This shows that $SC_w^{\Lambda} (\Gamma(\varphi))(z,y)=1$ for almost all $(z,y)\in C_w^{\Lambda} (\Gamma(\varphi))$. It also follows that
$$
\Theta_d (SC_w^{\Lambda} (\Gamma(\varphi))) (0)
=
\Theta_d (C_w^{\Lambda} (\Gamma(\varphi))) (0).
$$
We proceed with similar arguments to show the remaining equalities.
 Assume from now on until the end of the proof, for simplicity, that $w=0$.
By Proposition \ref{3.6} (c) one has
$\Theta_d (\Gamma (\psi)) (0) = \Theta_d (C_w^\Lambda(\Gamma ( \varphi)) (0)$.
By Propositions \ref{prop1.7KR} and \ref{3.6} and since
$\varepsilon < 1$, the map $z \mapsto (z, \psi (z))$ defined for $z$ in $C_u^{\Lambda, \Omega} (U)$ preserves the norm in the sense that $|z| =|(z,\psi(z) )|$ (recall that one uses the $\sup$-norm for tuples in an ultrametric setting). Hence, by the definition of $\Theta_d$, by Lemma \ref{nonempty2} and by section \ref{opencase}, one has  $\Theta_d (\Gamma (\psi)) (0) = \Theta_d (C_u^{\Lambda, \Omega} (U)) (0) = \Theta_d (C_u^{\Lambda} (U)) (0) = \Theta_d (U) (0)$.
Combining the obtained series of equalities yields $\Theta_d (SC_w^{\Lambda} (\Gamma(\varphi))) (0) = \Theta_d (U) (0)$.

Finally we prove that this value also equals $\Theta_d (\Gamma(\varphi)) (0)$, by again a similar argument.
Define $U'':=\{z\in U,\  |\varphi(z)| < |z| \}$. Then, by Lemma \ref{prop1.7KR} and its proof based on Lemma \ref{curve}, we find $C_u^{\Lambda} (U) = C_u^{\Lambda} (U'')$. Hence, $\Theta (C_u^{\Lambda} (U))(0) = \Theta_d (C_u^{\Lambda} (U'') )(0)$ which also equals $ \Theta_d (U''  )(0)  $ by section \ref{opencase}.
  Since on  $U''$  the map $z\mapsto (z,\varphi(z))$ preserves the norm in the sense that $|z| = |(z,\varphi(z))|$ we find by the definition of $\Theta_d$ that
 $  \Theta_d(U'')(u) = \Theta_d ( \Gamma(\varphi_{|U''}) ) (w) = \Theta_d ( \Gamma(\varphi) ) (w)$ which finishes the proof.
\end{proof}

\subsection{An alternative view on cones with multiplicities.}

Let $X\subset K^n$ be definable and of dimension $d$.
It follows from Proposition \ref{3.6} and its corollary \ref{trivcorSC}  that there is a finite definable partition of $X$ into parts $X_j$ which are graphs of $\varepsilon$-analytic Lipschitz functions on open subsets $U_j$, such that, for small enough $\Lambda$, $\Theta_d(X_j)(0)=\Theta_d(C^\Lambda_0(X_j))(0)$ for each $j$.
 It follows by additivity that
$$
\sum_j \Theta_d (C^\Lambda_0(X_j)) = \sum_j \Theta_d(X_j)=\Theta_d (X).$$
This common value can of course can be different from $\Theta_d(C_0^\Lambda(X))$ since $X_j$ et $X_k$
may have tangent cones which coincide on a part of dimension $d$ for different $j$, $k$, that is, there might be overlap in the union $C_0^\Lambda(X) = \cup_j C_0^\Lambda(X_j)$.
Let us decompose $C_0^\Lambda(X)$ into parts $C_k$, $k\geq 1$, with the property that a line $\ell\subset  C_k$ (through the origin) belongs to $C_0^\Lambda(X_j)$ for exactly $k$ different $j$. (Note that such decomposition is in general not unique.)
Let us then define the function $CM_0^{\Lambda} (X)$ on $C_0^{\Lambda} (X)$, up to definable subsets of $C_0^{\Lambda} (X)$ of dimension $<d$, by
 $$
 CM_0^{\Lambda} (X)=\sum_k k\cdot \11_{C_k}.
 $$
Any other such decomposition of $X$ into parts $X_j$ will yield the same function $CM_0^{\Lambda} (X)$ up to a definable subset of $C_0^{\Lambda} (X)$ of dimension $<d$, as can be seen by taking common refinements and by general dimension theory of definable sets.
Clearly $\Theta_d(CM_0^{\Lambda} (X))=
\sum_j \Theta_d(X_j)=\Theta_d (X)$.
Moreover, by additivity of $SC$ and by Corollary \ref{trivcorSC}, for all $z\in C_0^{\Lambda} (X)$ up to a definable set of dimension $<d$
$$
SC_0^{\Lambda} (X)(z) = CM_0^{\Lambda} (X)(z).
$$
In particular it follows that $SC_0^{\Lambda} (X)(z)$ is a nonnegative integer for all $z\in C_0^{\Lambda} (X)$ up to a definable set of dimension $<d$.

\subsection{End of proof of
Theorem \ref{mt}}\label{ept}We consider a definable subset $X$ of dimension $d$
in $K^n$ and a point $x$ of $K^n$. We may assume $x$ lies in the closure of $X$.
Let us fix $0 < \varepsilon < 1$.
By Proposition \ref{1.4} there exists a decomposition
$$X =  \bigcup_{1 \leq i \leq N (\varepsilon)} \gamma_i (\Gamma_i (\varepsilon)) \cup Y$$
with
$Y$ a definable subset of $X$ of dimension $< d$,
definable open subsets
$U_i (\varepsilon)$ of $K^d$, for
$1 \leq i \leq N (\varepsilon)$,
definable analytic functions
$\varphi_i (\varepsilon) : U_i (\varepsilon) \rightarrow K^{m - d}$ whose graphs
$\Gamma_i (\varepsilon)$
are all $\varepsilon$-analytic,
and elements $\gamma_1$, \dots, $\gamma_{N (\varepsilon)}$ in
$\GL_m (R)$ such that the sets $\gamma_i ( \Gamma_i (\varepsilon))$ are all disjoint and contained in $X$.
We denote by  $u_i$  the image of
$\gamma_i^{-1} (x)$ under the projection to $K^d$
and we fix  $\Lambda$ adapted to
$(X,x)$ and to
$(U_i (\varepsilon), u_i)$ for every
$1 \leq i \leq N (\varepsilon)$.
By linearity and since the $\gamma_i$'s are isometries,
we have then
$$\Theta_d (X) (x) = \sum_{1 \leq i \leq N (\varepsilon)}
\Theta_d (\Gamma_i (\varepsilon)) (\gamma_i^{-1} (x))$$
and
$$\Theta_d (SC_x^{\Lambda}(X)) (0) = \sum_{1 \leq i \leq N (\varepsilon)}
\Theta_d (SC_{\gamma_i^{-1} (x)}^{\Lambda}(\Gamma_i (\varepsilon))) (0),$$
and the result follows from Corollary \ref{trivcorSC}. \qed

\subsection{Existence of distinguished
tangent $\Lambda$-cones}\label{edt}
We deduce from Corollary \ref{distinguished cone for open sets} and
Proposition \ref{3.6} the existence of distinguished
tangent $\Lambda$-cones.

\begin{theorem}\label{distinguished cone}
Let $X$ be a definable subset of $K^n$. Then there exists $\Lambda\in
\cal D$ such that for any $x\in \overline X$, $C_x^\Lambda(X)$ is a
distinguished $\Lambda$-cone, that is to say
$\Lambda'\subset \Lambda$
implies $C_x^{\Lambda'}(X)=C_x^\Lambda(X)$.
\end{theorem}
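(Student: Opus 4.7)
The plan is to argue by induction on $d = \dim X$. The case $d = 0$ is immediate since $X$ is then finite and $C_x^\Lambda(X)$ equals $\{0\}$ or $\emptyset$ independently of $\Lambda$. For the inductive step, fix $\varepsilon$ with $0 < \varepsilon < 1$ and combine Proposition \ref{1.4} with Theorem \ref{Lipschitz decomposition} to write $X$ as a finite disjoint union $X = \bigcup_i \gamma_i(\Gamma_i) \cup Y$, where each $\Gamma_i = \Gamma(\varphi_i)$ is the graph of a globally Lipschitz, $\varepsilon$-analytic definable function $\varphi_i : U_i \to K^{m-d}$ on a definable open $U_i \subset K^d$, each $\gamma_i \in \GL_m(R)$ acts as an isometry on $K^m$, and $\dim Y < d$. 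Using Corollary \ref{localconic} I pick a single $\Lambda_0 \in \cD$ simultaneously adapted to every $U_i$. Since $C_x^\Lambda(A \cup B) = C_x^\Lambda(A) \cup C_x^\Lambda(B)$ and tangent cones commute with the isometries $\gamma_i$, it suffices to establish distinguishedness of $C_w^{\Lambda'}(\Gamma_i)$ at every $w \in \overline{\Gamma_i}$ for each $i$, and separately for $Y$ by induction.

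Fix a graph $\Gamma_i$ and a point $w = (u, v) \in \overline{\Gamma_i}$. If $w$ lies in the smooth locus of $\Gamma_i$ then $C_w^\Lambda(\Gamma_i)$ coincides with the tangent space $T_w\Gamma_i$, a $K^\times$-cone, hence trivially independent of $\Lambda$. Otherwise $u$ belongs to the boundary $\overline{U_i}\setminus U_i$, a definable set of dimension $<d$, and after further partitioning $\overline{\Gamma_i}$ into branches we may assume by Corollary \ref{cor1.8KR} that $v = \lim_{x \to u}\varphi_i(x)$ is well-defined. Assume first that $C_u^{\Lambda_0}(U_i)$ has maximal dimension $d$, a property independent of $\Lambda_0$ by Lemma \ref{dim}. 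Then Corollary \ref{distinguished cone for open sets} yields $C_u^{\Lambda'}(U_i) = C_u^{\Lambda_0}(U_i)$ for every $\Lambda' \subset \Lambda_0$ in $\cD$, and Proposition \ref{3.6}(a),(c) identifies $C_w^{\Lambda'}(\Gamma_i)$ with the closure of the graph of $\psi^{\Lambda'}(z) := \lim_{\lambda\to 0,\lambda\in\Lambda'}\lambda^{-1}(\varphi_i(u+\lambda z)-v)$ on $C_u^{\Lambda',\Omega}(U_i)$. Since any limit that exists along $\Lambda_0$ as $\lambda \to 0$ automatically exists with the same value along any $\Lambda' \subset \Lambda_0$, one has $\psi^{\Lambda'} = \psi^{\Lambda_0}$ and hence $C_w^{\Lambda'}(\Gamma_i) = C_w^{\Lambda_0}(\Gamma_i)$.

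The main obstacle is the residual case $\dim C_u^{\Lambda_0}(U_i) < d$, since Proposition \ref{3.6} is only stated under the maximal dimension assumption. The set of such boundary points $u$ is definable, sits inside $\overline{U_i}\setminus U_i$, hence has dimension $<d$, and its preimage in $\overline{\Gamma_i}$ still has dimension $<d$ by Corollary \ref{cor1.8KR}; in particular $\dim C_w^{\Lambda'}(\Gamma_i) < d$ at such $w$. To treat this I plan to recycle the line-by-line argument from the proof of Corollary \ref{distinguished cone for open sets}: along each direction $\ell$ in the set $\Omega_1$ of lines through $u$ along which $U_i$ accumulates, the slice $U_i \cap \ell$ is one-dimensional and is a local $\Lambda$-cone for small enough $\Lambda$ by Lemma \ref{coneone}, while the $\varepsilon$-analytic limit producing $\psi$ along $\ell$ only uses the Curve Selection Lemma and Lemma \ref{lemWhitney}, which go through without the maximal dimension hypothesis; gluing the pieces and taking closures then recovers $C_w^{\Lambda'}(\Gamma_i)$ exactly as in the proof of Proposition \ref{3.6}(c), and yields the independence from $\Lambda'$. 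Finally the inductive hypothesis supplies $\Lambda_Y \in \cD$ such that $C_y^{\Lambda_Y}(Y)$ is distinguished at every $y \in \overline{Y}$, and setting $\Lambda := \Lambda_0 \cap \Lambda_Y$ completes the induction. Uniformity of $\Lambda$ in the base point $x \in \overline{X}$ is built in, because both the decomposition of Proposition \ref{1.4} and the choice of $\Lambda_0$ furnished by Corollary \ref{localconic} are global, i.e.\ produced once and for all from $X$.
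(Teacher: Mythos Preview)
Your induction setup and treatment of the maximal-dimension case match the paper's approach: reduce via Proposition~\ref{1.4} and Theorem~\ref{Lipschitz decomposition} to graphs of $\varepsilon$-analytic Lipschitz maps, then combine Corollary~\ref{distinguished cone for open sets} with Proposition~\ref{3.6}(a),(c) to conclude $C_w^{\Lambda'}(\Gamma_i)=C_w^{\Lambda_0}(\Gamma_i)$ when $C_u^{\Lambda_0}(U_i)$ has dimension $d$.

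The gap is in your residual case $\dim C_u^{\Lambda_0}(U_i)<d$. Your plan is to recycle the line-by-line argument along directions $\ell\in\Omega_1$, glue, take closures, and recover $C_w^{\Lambda'}(\Gamma_i)$ ``exactly as in the proof of Proposition~\ref{3.6}(c)''. But the density step in~\ref{3.6}(c) rests on the fact that $C_u^{\Lambda,\Omega}(U_i)$ is dense in $C_u^{\Lambda}(U_i)$, which is Lemma~\ref{nonempty2}, and that lemma genuinely needs the maximal-dimension hypothesis. When the cone has lower dimension, $\Omega_1$ can be empty: take $U=\{(x_1,x_2)\in K^2: 0<|x_1|<1,\ |x_2-x_1^2|<|x_1|^3\}$. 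Here $C_0^{\Lambda}(U)$ is the $x_1$-axis, yet for every line $\ell$ through $0$ the intersection $U\cap\ell$ fails to accumulate at $0$, so $\Omega_1=\emptyset$ and there are no slices to glue. Your argument produces nothing, while the tangent cone is nontrivial.

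The paper handles this case by a different device that feeds directly into the induction. Given $x$ with $\dim C_x^{\Lambda}(X)<d$, one uses the deformation $h:\overline{\cD(X,x,\Lambda)}\to K$ and a definable choice function to build a definable $Y\subset X$ with $\dim Y=\dim C_x^{\Lambda}(X)$ and $C_x^{\Lambda}(Y)=C_x^{\Lambda}(X)$; concretely, $Y$ is the image in $X$ of a section of $h$ over the ``$\Lambda$-link'' $C_x^{\Lambda}(X)\cap\bigcup_{i=0}^{e-1}S(0,i)$. One then applies the inductive hypothesis to $Y$. This construction is definable in the parameter $x$, so only finitely many $\Lambda$'s arise as $x$ varies, and their intersection gives the uniform $\Lambda$. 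Your proposal should replace the attempted direct argument in the residual case by this reduction-to-lower-dimension step.
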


\begin{proof}

We will work by induction on the dimension $d$ of $X$, where for $d=0$ the statement is trivial.
We may work up to a finite partition of $X$ into definable pieces $X_k$ with distinguished $\Lambda_k$-cones $C_x^{\Lambda_k}(X_k)$ for all $x$ and for some $\Lambda_k$, since one can put $\Lambda := \cap_k \Lambda_k$ and then $C_x^\Lambda(X)=\cup_k C_x^\Lambda(X_k)$ implies that $C_x^\Lambda(X)$ is a distinguished $\Lambda$-cone for all $x$.
Up to a finite partition using Proposition \ref{1.4} and Theorem \ref{Lipschitz decomposition}, we may suppose that $X$ is the graph of some definable $C$-Lipschitz and
$\varepsilon$-analytic map $\varphi:U\to K^{n-d}$, where $U$ is a definable open subset of
$K^{d}$ and $d$ is the dimension of $X$.

Fix $x\in \overline X$ and write $u\in \overline U$ for the projection of $x$ in $K^d$. We will construct a distinguished $\Lambda$ for this fixed $x$, with the extra property that in the construction one could as well take $x$ as a parameter running over $K^n$ and consider the analogue of the set-up in families parameterized by $x$, and then only finitely many $\Lambda$ will come up. Taking the intersection of these finitely many $\Lambda$ as above then finishes the proof.

First suppose that $\varphi$ falls under the conditions of Proposition \ref{3.6}, that is,
 $C_{u}^\Lambda(U)$ has maximal dimension $d$ for some $\Lambda\in \cal D $ which is adapted to $U$.
  We know from Corollary \ref{distinguished cone for open sets} that
$\Lambda$ is distinguished for $U$, meaning that for $\Lambda'\subset \Lambda$
one has
\begin{equation}\label{lambdal'}
C_{u}^{\Lambda'}(U) =C_{u}^{\Lambda}(U).
\end{equation}
Fix $\Lambda'\subset \Lambda$ and consider
$$
\psi: C_{u}^{\Lambda,\Omega}(U)\to
K^{n-d}
$$
and
$$
\psi': C_{u}^{\Lambda',\Omega'}(U)\to
K^{n-d}$$
(the notation being coherent with Proposition \ref{3.6}).
We may suppose that $\Omega = \Omega'$. But then  $C_{u}^{\Lambda,\Omega}(U) = C_{u}^{\Lambda',\Omega'}(U)$  by Equation (\ref{lambdal'}), and, for any $z$ in this set, we have $\psi(z)=\psi'(z)$ by Proposition
\ref{3.6} (a). Hence, $\psi$ and $\psi'$ are the same function. Taking the closures of the graph of this function, Proposition
\ref{3.6} (c) now yields that $C_x^{\Lambda'}(X) = C_x^{\Lambda}(X)$ and we are done in this case.

Let us finally consider the case that $C^\Lambda_x(X)$ has dimension $<d$ for some $\Lambda$ (which happens if and only if $C^\Lambda_u(U)$ has dimension $<d$). We will construct a definable $Y\subset X$ such that $\dim(Y)=\dim(C^\Lambda_x(X))$ and
$C_x^\Lambda(Y)=C_x^\Lambda(X)$. Then we can replace $X$ by $Y$ and we are done by induction on the dimension.

Let  $h:\overline{\cal D(X,x,\Lambda)}
\to K$ be the deformation to $C_x^\Lambda(X)$.
We assume $x=0$ in what follows, though
we keep the notation $x$.
Let $\cal L(C^\Lambda_x(X))$ be
$$
C^\Lambda_x(X)\cap \displaystyle\bigcup_{i=0}^{e-1}S(0,i),
$$
where $e=[K^\times:\Lambda]$. We call  $\cal L(C^\Lambda_x(X))$ the $\Lambda$-link of $C^\Lambda_x(X)$.
Note that the $\Lambda$-cone generated by $\cal L(C^\Lambda_x(X))$ equals $C^\Lambda_x(X)$. Let $\tilde{\cal L}(C^\Lambda_x(X))$ be
${\cal L}(C^\Lambda_x(X))\times (B(0,n)\cap \Lambda)$ for some ball $B(0,n)$ around $0$.
Since there are definable choice functions, there is a map
$$
d: \tilde{\cal L}(C^\Lambda_x(X))\to \overline{\cal D(X,x,\Lambda)}
$$
with $ d(z,\lambda)\in h^{-1}(\lambda)$ for all $\lambda$ and $\lim_{\lambda\to 0 }
d(z,\lambda)=z$ for all $z$. Since we may and do suppose that $z\not = d(z,\lambda)$, the image of $d$ is
of dimension $\dim (\tilde{\cal L}(C^\Lambda_x(X)) ) = \dim(\cal L(C^\Lambda_x(X)))+1=\dim(C^\Lambda_x(X))+1$. We send
$d( \tilde{\cal L}(C^\Lambda_x(X)))$ into  $X$ by
$r(z,\lambda)=\lambda\cdot z$ and we set
$Y=r(d( \tilde{\cal L}(C^\Lambda_x(X))))$. Then $Y$ is a definable
subset of $X$ of dimension $\dim(C^\Lambda_x(X))$ and by construction
$C^\Lambda_x(Y)=C^\Lambda_x(X)$.
\end{proof}

\section{A local Crofton formula}

\subsection{Local direct image}Let $p : X \rightarrow Y$
be a definable function between two definable sets of the same
dimension $d$. If $\varphi$ is a function in $\cC (X)$ and $y$ is in
$Y$ we set $p_! (\varphi) (y) = \sum_{x \in p^{- 1}(y)} \varphi
(x)$ if $p^{- 1}(y)$ is finite and $p_! (\varphi) (y) = 0$ if it
is infinite. The function $p_! (\varphi)$ lies in $\cC (Y)$,
since the cardinality of $p^{- 1}(y)$ takes only finitely many values when $y$ runs over $Y$.

If $X$ is a definable subset of  $K^n$ and  $x$ is a point
of $K^n$, we define the algebra
$\cC (X)_x$ of germs of  constructible functions in $\cC (X)$ at $x$ to be the quotient
of $\cC (X)$ by the equivalence relation
$\varphi \sim \varphi'$ if $ \11_{B (x , n)} \varphi =
\11_{B (x , n)} \varphi'$ for $n$ large enough. That definition is only relevant when
$x$ is in the closure of $X$.
Also, if $\varphi$ is in
$\cC (X)_x$ is the germ of a locally
bounded function $\psi$, $\Theta_d (\varphi)  :=\Theta_d (\psi)
(x)$ does not depend on the representative $\psi$.

Let $p : K^m \rightarrow K^{d}$ be a linear projection and let
$X$ and $Y$ be respectively definable subsets of $K^m$ and $K^{d}$
such that $p (X) \subset Y$.
Fix
$x$ in $K^m$.
When the  condition (\ref{sparks}) is satisfied
\begin{equation}\label{sparks}\tag{$\ast$}
\text{there exists}\, \,  n \geq 0 \, \, \text{such that} \, \,
p^{-1} (p (x)) \cap \overline{X \cap B (x, n)}= \{x\},
\end{equation}
then, for every function $\varphi$ in $\cC (X)$, the class
of $p_! (\varphi \11_{B (x, n)})$ in $\cC (Y)_{p (x)}$
does not depend on $n$ for $n$ large enough.
We denote it by $p_{!, x} (\varphi)$. We also denote by
$p_{!, x}$ the corresponding morphism
$\cC (X)_x \rightarrow \cC (Y)_{p (x)}$.

\subsection{The local Crofton formula for the local density}
For $x$ a point in $K^n$ we consider $K^n$ as a vector space with
origin $x$ and
for $0 \leq d \leq n$, we denote by $G (n, n-d)$ the corresponding
Grassmannian of
$(n-d)$-dimensional  vector subspaces of
$K^n$. It is a compact $K$-analytic variety, endowed with a unique
measure $\mu_{n, d}$ invariant under $\GL_n (R)$
and such that
$\mu_{n, d} (G (n, n-d)) = 1$.

For any $V$ in $G (n, n-d)$,
we denote by $p_V : K^n \rightarrow K^n/V$, the canonical
projection,
where $K^n/V$ is identified with the $K$-vector space $K^d$.
This identification enables the computation of the local
density
of germs
in $K^n/V$.

Let $X$ be a definable subset of $K^n$ of dimension $d$ and let $x$ be a
point of $K^n$. By general dimension theory for definable sets
there exists a dense definable open subset $\Omega(=\Omega_X)$ of
$G(n, n-d)$ such that for every
$V$ in $\Omega$ the projection
$p_V$ satisfies the condition \textup{(}\ref{sparks}\textup{)}
with respect to $(X, x)$.

The following statement is the $p$-adic analogue of
the so-called {\sl local Crofton formula} proved in \cite{comte}
for real subanalytic sets
and more generally in \cite{lk}, again in the real subanalytic
setting, in its
multidimensional
 version.
\begin{theorem}\label{lcc}
Let $X$ be a definable subset of $K^n$ of dimension $d$ and let
$x$ be a point of $K^n$. Let $\varphi$ in $\cC (X)_x$ be the germ
of a locally bounded function.
Then
$$
\Theta_d (\varphi) (x)
= \int_{V\in \Omega \subset G (n, n-d)}
\Theta_d (p_{V !, x} (\varphi)) \,
d \mu_{n, d}(V).
$$
\end{theorem}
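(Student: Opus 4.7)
The plan is a multi-step reduction to a single $\varepsilon$-analytic, globally Lipschitz graph, followed by passage to the tangent cone via Theorem \ref{mt}, where the formula becomes a direct integral-geometric computation that exploits $\Lambda$-homogeneity and the $\GL_n(R)$-invariance of $\mu_{n,d}$.

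First, I repeat verbatim the opening reduction of the proof of Theorem \ref{mt}: any bounded $\varphi \in \cC(X)$ decomposes, through the level sets of its polynomial/exponential building blocks and Proposition \ref{convmon}, into a countable $\QQ$-linear combination of characteristic functions; since $p_{V!,x}$ is linear in $\varphi$, this reduces the statement to $\varphi = \11_X$. Then Proposition \ref{1.4} combined with Theorem \ref{Lipschitz decomposition} writes $X$, up to a subset of dimension $<d$, as a finite disjoint union of pieces $\gamma_i(\Gamma(\varphi_i))$ with $\gamma_i \in \GL_n(R)$ and $\varphi_i : U_i \to K^{n-d}$ a definable, $\varepsilon$-analytic, globally $C$-Lipschitz map on an open $U_i \subset K^d$ (with $\varepsilon < 1$). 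The $\GL_n(R)$-invariance of $\mu_{n,d}$ absorbs the $\gamma_i$ via the substitution $V \mapsto \gamma_i V$, so additivity of both sides reduces us to $X = \Gamma(\varphi)$, a single such graph, at the point $w = (u, \lim_{z \to u}\varphi(z))$.

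Next, I apply Theorem \ref{mt} to both sides. For $\Lambda$ small enough, the left-hand side equals $\Theta_d(SC_w^{\Lambda}(\Gamma))(0)$, and by Corollary \ref{trivcorSC} the cone $C := C_w^{\Lambda}(\Gamma)$ is almost everywhere the graph of an $\varepsilon$-analytic map $\psi$ on $C_u^{\Lambda}(U)$, with $SC_w^{\Lambda}(\Gamma) = \11_C$ almost everywhere. On the right, Theorem \ref{mt} applied to the locally bounded function $p_{V!,w}(\11_\Gamma) \in \cC(K^n/V)_{p_V(w)}$ rewrites $\Theta_d(p_{V!,w}(\11_\Gamma))$ as $\Theta_d(\nu_{p_V(w)}^{\Lambda}(p_{V!,w}(\11_\Gamma)))(0)$. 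The key technical step is to check, for $V$ in a definable dense open $\Omega \subset G(n,n-d)$, the equality $\Theta_d(\nu_{p_V(w)}^{\Lambda}(p_{V!,w}(\11_\Gamma)))(0) = \Theta_d(p_{V!,0}(\11_C))(0)$; this rests on the fact that, for such $V$, $p_V$ restricted to $\Gamma$ is near $w$ an isometric bijection onto its image, that $C_{p_V(w)}^{\Lambda}(p_V(\Gamma)) = p_V(C)$, and consequently that specialization commutes with the generic projection.

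It remains to prove the formula for the $\Lambda$-cone $C$ at $0$, namely $\Theta_d(\11_C)(0) = \int_V \Theta_d(p_{V!,0}(\11_C))(0)\, d\mu_{n,d}(V)$. Since $C$ is $\Lambda$-stable, both sides are determined by the finite spherical link $\bigcup_{i=0}^{e-1}(C \cap S(0,i))$, with $e := [K^\times:\Lambda]$, and the identity follows from a Fubini computation combining the canonical measure $\mu_d$ on $K^n$ with the $\GL_n(R)$-invariant measure $\mu_{n,d}$ on $G(n,n-d)$, exactly in the spirit of \cite{comte} and \cite{lk}. The main obstacle is the commutation of $p_{V!,x}$ with $\nu_x^{\Lambda}$ used in the previous paragraph: one must invoke Propositions \ref{prop1.7KR} and \ref{3.6} to control how generic linear projections interact with the deformation $\cD(X,x,\Lambda)$ near $0$, and Theorem \ref{distinguished cone} to select a single $\Lambda$ uniformly adapted to both $X$ and the family of its generic projections $p_V(X)$.
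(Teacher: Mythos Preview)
Your overall architecture matches the paper's: reduce to $\varphi=\11_X$, cut $X$ into $\varepsilon$-analytic Lipschitz graphs via Proposition~\ref{1.4} and Theorem~\ref{Lipschitz decomposition}, use Theorem~\ref{mt} and Corollary~\ref{trivcorSC} to replace each graph by its tangent $\Lambda$-cone, and finish by an integral-geometric identity for $\Lambda$-cones. Two places, however, are genuine gaps rather than routine details.

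\textbf{The cone identity is not ``a Fubini computation''.} For a $\Lambda$-cone $C$ the equality $\Theta_d(C)(0)=\int_\Omega \Theta_d(p_{V!,0}(\11_C))\,d\mu_{n,d}$ is the heart of the proof, and the paper devotes Lemmas~\ref{lccconeplan} and~\ref{lcccone} to it. The difficulty is that $p_V$ does not send $C\cap S(0,i)$ into a single sphere of $K^n/V$, so the naive sphere-by-sphere comparison fails. The paper introduces the non-linear rescaling $\Phi_V(x)=\pi_K^{\ord(x)-\ord(p_V(x))}p_V(x)$ to repair this, computes via change of variables that $\int_\Omega \Theta_d(p_V(X))(0)\,d\mu_{n,d}=\kappa\cdot\Theta_d(X)(0)$ with $\kappa=\int_\Omega|\Jac\,\Phi_V(x)|\,d\mu_{n,d}$ independent of $x$ by the $\GL_n(R)$-invariance of $\mu_{n,d}$, and then identifies $\kappa=1$ by specializing to $X=\Pi$ a linear $d$-plane. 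None of this is visible from a bare appeal to Fubini, and your sketch gives no hint of the renormalization map or of how the constant is pinned down.

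\textbf{The commutation step is misformulated.} For generic $V\in\Omega$ the restriction of $p_V$ to your graph $\Gamma$ is not an ``isometric bijection onto its image''; condition~(\ref{sparks}) only guarantees that the fibre through $w$ is isolated, and even local injectivity of $p_V$ on $\Gamma$ and on $C_w^\Lambda(\Gamma)$ is not automatic once $V$ is far from the vertical $\{0\}^d\times K^{n-d}$. The paper does not attempt to prove any such injectivity directly: instead it invokes Lemma~\ref{partition} to further partition $X$ (depending on $V$) into pieces on which $p_V$ is injective both on the piece and, up to dimension~$<d$, on its tangent cone; then $p_{V!,0}(\11_{X_j})=\11_{p_V(X_j)}$, the linearity of $p_V$ gives $p_V(C_0^\Lambda(X_j))=C_0^\Lambda(p_V(X_j))$, and the open-set case of Theorem~\ref{mt} closes the loop. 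Your commutation of $\nu^\Lambda$ with $p_{V!}$, and your appeal to Theorem~\ref{distinguished cone} for a $\Lambda$ uniform over the whole family $\{p_V(X)\}_V$, are not needed and would be hard to justify as stated.
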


We may assume that $X=\overline X$ by Proposition \ref{bord} and
that $x=0$ and $0\in X$, for if $0\not\in X$,
$\Theta_d(X)(0) = \Theta_d (p_{V !, x} (\varphi))=0$ (for generic $V$)
 and the statement of Theorem \ref{lcc} is then true.

In order
to emphasize
 the geometric-measure part of \ref{lcc}
we start with the following lemma,
which is Theorem \ref{lcc} for $X$ a
definable $\Lambda$-cone of $K^n$ of dimension $d$ contained in some
$d$-dimensional vector space of $K^n$ and $\varphi=\11_X$.
\begin{lem}\label{lccconeplan}
 Let $\Lambda \in \cD$, $\Pi\in G(n,d)$ and
$X$ be a definable $\Lambda$-cone contained in  $\Pi$ and with origin $0$.  Then
$$\Theta_d(X)(0)=\int_{V\in \Omega\subset G(n,n-d)}
\Theta_d(p_V(X))(p_V(0))
\, d \mu_{n,d} .$$
\end{lem}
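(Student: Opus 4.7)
The plan is to turn the lemma into a Fubini-type computation combined with a kernel evaluation. Since $X \subset \Pi$ forces all densities in question to be bounded by $1$ and $\mu_{n,d}$ is a probability measure, dominated convergence, together with the periodicity of the ratios guaranteed by the $\Lambda$-cone structure, lets me exchange $\int_V$ with the mean value $\MV_n$ appearing in the definition of $\Theta_d(p_V(X))(0)$. After this exchange the lemma reduces, up to $\MV_n$, to the per-level identity
\[
\mu_\Pi(X \cap B_\Pi(0,n)) \;=\; q^{nd}\int_V \mu^{\mathrm{can}}(p_V(X)\cap B^{\mathrm{can}}(0,n))\, d\mu_{n,d}(V),
\]
where $B^{\mathrm{can}}$ denotes the canonical ball in $K^n/V$ under the normalization $\mu^{\mathrm{can}}(p_V(R^n))=1$.

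For any $V\in\Omega$ the transversality $V\cap\Pi=0$ makes $p_V|_\Pi:\Pi\to K^n/V$ a linear isomorphism. Pulling the canonical measure back through this iso produces a Jacobian factor $q^{-a(V,\Pi)}$, where $q^{a(V,\Pi)}=[p_V(R^n):p_V(\Pi\cap R^n)]$ is the lattice index measuring the failure of $\Pi\cap R^n$ to be complementary to $V\cap R^n$ inside $R^n$. Rewriting
\[
\mu^{\mathrm{can}}(p_V(X)\cap B^{\mathrm{can}}(0,n))=q^{-a(V,\Pi)}\,\mu_\Pi\bigl(X\cap (p_V|_\Pi)^{-1}(B^{\mathrm{can}}(0,n))\bigr)
\]
and exchanging the two integrals by Fubini, the right-hand side takes the form
\[
\int_{z\in X} h_n(z)\, d\mu_\Pi(z), \qquad h_n(z):=\int_V q^{-a(V,\Pi)}\,\mathbf{1}\{z\in\pi_K^nR^n+V\}\, d\mu_{n,d}(V),
\]
since $p_V(z)\in B^{\mathrm{can}}(0,n)=\pi_K^n p_V(R^n)$ is equivalent to $z\in\pi_K^nR^n+V$.

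It remains to evaluate the kernel $h_n$. By the $\GL_n(R)$-invariance of $\mu_{n,d}$ combined with the stabilizer $\GL_d(R)\times\GL_{n-d}(R)$ of $\Pi$ inside $\GL_n(R)$, the value $h_n(z)$ depends only on $\ord z - n$. To pin it down, I would first test the identity on the trivial base case $X=\Pi$, where both sides equal $1$ (as $p_V(\Pi)=K^n/V$ for all $V\in\Omega$); this fixes the normalization $\int_\Pi h_n\,d\mu_\Pi = q^{-nd}$. Combined with the sphere decomposition $\Pi\setminus\{0\}=\bigsqcup_k S_d(k)$ and the orbit structure, one reads off $h_n$ on each sphere. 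Plugging back in, the $\Lambda$-cone structure of $X$ forces $\mu_\Pi(X\cap S_d(k))$ to satisfy the standard periodicity in $k$, so the kernel integral reassembles after $\MV_n$ into $\Theta_d(X)(0)$.

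The main obstacle is the explicit identification of $h_n$: the non-generic $V$ for which $q^{a(V,\Pi)}>1$ (i.e.\ those with $V\cap R^n+ \Pi\cap R^n\subsetneq R^n$) contribute with weight strictly less than that of the generic stratum, and it is crucial that these differently-weighted contributions, organized by the Bruhat-type stratification of $G(n,n-d)$ relative to $\Pi$, balance precisely to give the expected normalization. This combinatorial sum is handled via elementary divisors of the inclusion $p_V(\Pi\cap R^n)\hookrightarrow p_V(R^n)$, and once it is carried out the lemma follows.
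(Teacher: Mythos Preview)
Your Fubini-plus-kernel strategy can be made to work and is genuinely different from the paper's, but you have misidentified the difficulty: the ``explicit identification of $h_n$'' that you call the main obstacle is not needed. Once you know $h_n(z)$ depends only on $j:=\ord z-n$, write $g(j)$ for this value; for a $\Lambda$-cone $X$ your Fubini integral becomes $\sum_j g(j)\,\mu_d(X\cap S(0,n+j))$, and since $q^{kd}\mu_d(X\cap S(0,k))$ is $e$-periodic in $k$, applying $\MV_n$ collapses everything to the single scalar $\sum_j g(j)q^{-jd}$ times the average normalized sphere mass. The test $X=\Pi$ then fixes that scalar to $(1-q^{-d})^{-1}$ and the lemma follows --- no Bruhat combinatorics or elementary-divisor sums are required. (Your displayed per-level identity also carries a spurious $q^{nd}$ on the right; check it on $X=\Pi$.)

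The paper reaches the same constant more directly by replacing $p_V$ with the sphere-preserving renormalization $\Phi_V(x)=\pi_K^{\ord x-\ord p_V(x)}p_V(x)$. Working sphere by sphere, a change of variables yields $\int_V\Theta_d(p_V(X))(0)\,d\mu_{n,d}=\kappa\cdot\Theta_d(X)(0)$ with $\kappa=\int_V|\Jac\Phi_V(x)|\,d\mu_{n,d}$; this is independent of $x$ by $\GL_n(R)$-invariance, and $X=\Pi$ gives $\kappa=1$. One checks $\sum_j g(j)q^{-jd}=(1-q^{-d})^{-1}\kappa$, so the two routes compute the same constant; the paper's packaging via $\Phi_V$ simply bypasses the kernel altogether.
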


\begin{proof}

For every $V\in G(n,n-d)$, by linearity of
$p_V$, $p_V(X)$ is a $\Lambda$-cone
of $K^n/V$ with origin $p_V(0)$, and as
$\dim(\Pi)=\dim(K^n/V)$,
$p_V(X)$ is isomorphic to $X$,  for generic $V$ ($V\in \Omega=\Omega_\Pi$).
In what follows we denote $p_V(0)$ by $0$.
Take an integer $e>0$ such that $\pi_K^e\in\Lambda$, where we recall that $\pi_K$ is a uniformizer of $R$.
The sets $X$ and $p_V(X)$ being $\Lambda$-cones,
one has the following disjoint union relations
\[
 X=\displaystyle \coprod_{z\in \ZZ} \  \pi_K^{ze}\cdot ( \coprod_{i=0}^{e-1} X \cap S(0,i) ) ,
 \]

\[
\hbox{ and }\ \ p_V(X)=\displaystyle \coprod_{z\in \ZZ}
\  \pi_K^{ze}\cdot  (\coprod_{c=0}^{e-1}( p_V(X)) \cap S(0,c) ).
 \]
 It follows by the definition of $\Theta_d$ that
\begin{equation}\label{a} \Theta_d(X)(0)=\frac{(1-q^{-d})^{-1}}{e}
\sum_{i=0}^{e-1}q^{id} \mu_d(X \cap S(0,i)) \end{equation}
\begin{equation}\label{b} \hbox{ and }\ \  \Theta_d(p_V(X))(0)=\frac{(1-q^{-d})^{-1}}{e}
\sum_{c=0}^{e-1}q^{cd} \mu_d(p_V(X)\cap S(0,c)).
 \end{equation}
For each $i=0,\ldots,e-1$, let $C_i$ be
$$
\coprod_{z\in\ZZ} \pi_K^{ze} \cdot p_V(X\cap S(0,i)).
$$
One has $p_V(X) =  \coprod_i C_i$ by the linearity of $p_V$, and, the $C_i$ are definable since $X$ is a $\Lambda$-cone. Define the disjoint definable sets $ A_c^i$, for $i$ and $c$ going from $0$ to $e-1$, by
$$
A_c^i =  C_i \cap S(0,c).
$$
Clearly
$$
\coprod_{i=0}^{e-1}A_c^i = p_V(X) \cap S(0,c).
$$
Moreover, the sets  $\pi_K^{i-c}\cdot A_c^i$ are disjoint by linearity of $p_V$ and by bijectivity of $p_V$ on $\Pi$.
By the fact that
$$
q^{cd}\mu_d(A^i_c)=q^{id}\mu_d(\pi_K^{i-c}\cdot A_c^i),
$$
we obtain
$$
 \sum_{c=0}^{e-1}q^{cd} \mu_d(p_V(X)\cap S(0,c))
 =
\sum_{c=0}^{e-1}q^{cd} \sum_{i=0}^{e-1}
\mu_d(A_c^i) =
\sum_{i=0}^{e-1} q^{id}\mu_d(\coprod_{c=0}^{e-1}
\pi_K^{i-c}\cdot A_c^i)
$$
 \begin{equation}\label{c}
 =\sum_{i=0}^{e-1}q^{id}\mu_d(B_V^i),
 \end{equation}
where
$B^i_V:=\displaystyle\coprod_{c=0}^{e-1}
\pi_K^{i-c}\cdot A_c^i$.
Let us now consider
$\Phi_{V}: \Pi\setminus \{0\}\to (K^n/V)\setminus \{0\}$,
defined by
$$
\Phi_V( x)=
\pi_K^{  \ord(x) - \ord (p_V(x))  } \cdot  p_V(x).
$$
This map is bijective from
$X\cap S(0,i)$ to $B^i_V$, since $p_V$ is bijective from  $X$ to $p_V(X)$.
By change of variables one obtains
$$\mu_d(B_V^i)= \int_{X\cap S(0,i)}
\vert
\Jac
( \Phi_V) \vert \, d\mu_d.  $$
Furthermore, by Fubini,
\begin{equation}\label{d} \int_{V\in \Omega} \mu_d(B_V^i)\, d\mu_{n,d}=
\int_{x\in X\cap S(0,i)}\int_{V\in \Omega}
\vert
\Jac
( \Phi_V)(x) \vert \, d\mu_{n,d}(V) \, d\mu_d(x).\end{equation}
Note that,
for $x\in S(0,i)$, the quantity
$\kappa_i= \int_{V\in \Omega}
\vert
\Jac
( \Phi_V)(x) \vert \, d\mu_{n,d}(V)$ does not depend on $x$.
Indeed,
$\GL_n (R)$ acts transitively on
$S(0,i)$, $\mu_{n,d}$ is invariant under
this action and if $g\in \GL_n (R)$ and
$x'=g\cdot x$ for $x,x'\in S(0,i)$, then
$\Jac
(\Phi_V)(x')=
\Jac
(\Phi_{g^{-1}\cdot V})(x)$.
Moreover, by linearity of $p_V$, one has that $\kappa_i=\kappa$ is independent of $i$.
It follows from (\ref{b}), (\ref{c}) and (\ref{d}) that
$$  \int_{V\in \Omega} \Theta_d(p_V(X))(0)\, d\mu_{n,d}(V)
=\frac{(1-q^{-d})^{-1}}{e}\sum_{i=0}^{e-1}
 q^{id} \int_{x\in X\cap S(0,i)} \kappa\, d\mu_d
 (x).
 $$
$$= \kappa \cdot \frac{(1-q^{-d})^{-1}}{e}\sum_{i=0}^{e-1}    q^{id}   \mu_d(X\cap
S(0,i)).
$$
Finally, by (\ref{a}), we obtain
$$ \int_{V\in \Omega} \Theta_d(p_V(X))(0)\, d\mu_{n,d}(V)
=\kappa\cdot \Theta_d(X)(0).$$
One gets  $\kappa=1$ by taking  $X=\Pi$ in the latter formula.
\end{proof}

Lemma \ref{lccconeplan} may be viewed as
the tangential formulation of the
local Crofton formula for general definable $\Lambda$-cone sets
and its proof captures its geometric measure content.
Note that its proof still works
assuming that $X$ is a definable $\Lambda$-cone of dimension $d$
in $K^n$, instead of
a definable $\Lambda$-cone of dimension $d$ contained in some
$d$-dimensional vector space $\Pi$. Indeed, it is  essentially enough to
replace, in the proof
of Lemma \ref{lccconeplan},
$\Phi_V:\Pi\setminus \{0\}\to (K^n/V)\setminus \{0\}$ by the
restriction
of the mapping $ w\mapsto
\Psi_V(x)=  \pi_K^{  \ord(x) - \ord (p_V(x))  } p_V(x)$
on the smooth part of $X$
(the fibers of $\Psi_{V\vert X}$ being counted with multiplicity
in the area formula).
Hence we get the following extension of Lemma \ref{lccconeplan}:

\begin{lem}\label{lcccone}
 Let $\Lambda \in \cD$ and
$X$ be a definable $\Lambda$-cone of $K^n$ with origin $0$.
Then 
$$\Theta_d(X)(0)=\int_{\Omega\subset G(n,n-d)}
\Theta_d(p_{V!,0}(\11_X)) \
 d \mu_{n,d}(V) .
 \qed
 $$
\end{lem}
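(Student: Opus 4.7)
The plan is to imitate the proof of Lemma \ref{lccconeplan} precisely, making the substitution indicated by the authors in the paragraph immediately following that lemma, namely replacing $\Phi_V$ by the restriction to $X^{\rm sm}$ of the map $\Psi_V : x \mapsto \pi_K^{\ord(x)-\ord(p_V(x))} p_V(x)$. First, since $\Theta_d$ is unaffected by removing a set of dimension $<d$ from $X$, I would reduce to the case where $X$ equals its smooth locus $X^{\rm sm}$, a definable $K$-analytic submanifold of dimension $d$ which inherits the $\Lambda$-cone structure away from a lower-dimensional set. Fixing $e \geq 1$ with $\pi_K^e \in \Lambda$, the $\Lambda$-cone structure gives, exactly as in the proof of Lemma \ref{lccconeplan},
$$\Theta_d(X)(0) = \frac{(1-q^{-d})^{-1}}{e}\sum_{i=0}^{e-1} q^{id}\,\mu_d(X \cap S(0,i)),$$
and, since $p_{V!,0}(\11_X)$ is $\Lambda$-invariant near $0 \in K^n/V$ for $V$ generic (using linearity of $p_V$ and the cone structure of $X$, together with condition (\ref{sparks})),
$$\Theta_d(p_{V!,0}(\11_X))(0) = \frac{(1-q^{-d})^{-1}}{e}\sum_{c=0}^{e-1} q^{cd}\int_{S(0,c) \subset K^n/V} p_{V!,0}(\11_X)(y)\, d\mu_d(y).$$

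Next, I would exploit the identity $|\Psi_V(x)| = |x|$, so that $\Psi_V$ sends $X^{\rm sm} \cap S(0,i)$ into the sphere $S(0,i) \subset K^n/V$; applied to the generically finite-to-one map $\Psi_V|_{X^{\rm sm} \cap S(0,i)}$, the $p$-adic area formula yields
$$\int_{S(0,i) \subset K^n/V} \#\bigl(\Psi_V^{-1}(y)\cap X^{\rm sm}\cap S(0,i)\bigr)\, d\mu_d(y) = \int_{X^{\rm sm}\cap S(0,i)} |\Jac(\Psi_V|_{X^{\rm sm}})|\, d\mu_d.$$
The heart of the proof, playing the role of equation (c) in Lemma \ref{lccconeplan}, is the identity
$$\sum_{c=0}^{e-1} q^{cd}\int_{S(0,c)} p_{V!,0}(\11_X)(y)\, d\mu_d(y) = \sum_{i=0}^{e-1} q^{id}\int_{S(0,i)} \#\bigl(\Psi_V^{-1}(y)\cap X^{\rm sm}\cap S(0,i)\bigr)\, d\mu_d(y).$$
This is the main technical obstacle. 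Every preimage $x \in p_V^{-1}(y)\cap X$ (for $y$ near $0$) admits a unique decomposition $x = \pi_K^{ez}\,x_0$ with $x_0 \in X^{\rm sm} \cap S(0,i)$ for some $i \in \{0,\ldots,e-1\}$; under $\Psi_V$, the point $p_V(x_0)$ lying on $S(0,\ord p_V(x_0)) \subset K^n/V$ is rescaled back to $S(0,i)$, and the change-of-variable factor $q^{d(i-\ord p_V(x_0))}$ converts the weighting $q^{cd}$ attached to the sphere containing $p_V(x) = y$ into the weighting $q^{id}$ attached to the sphere containing $\Psi_V(x_0)$. This is exactly parallel to the passage from equation (b) to (c) in Lemma \ref{lccconeplan} via the sets $A_c^i$ and $B_V^i$, the only new feature being that the multiplicity of $\Psi_V|_{X^{\rm sm}}$ must be tracked instead of the trivial multiplicity $1$ of the bijection $\Phi_V$.

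Finally, combining the three displays and integrating over $V \in \Omega$ with Fubini, I would obtain
$$\int_\Omega \Theta_d(p_{V!,0}(\11_X))(0)\, d\mu_{n,d}(V) = \frac{(1-q^{-d})^{-1}}{e}\sum_{i=0}^{e-1} q^{id}\int_{X^{\rm sm}\cap S(0,i)} \left(\int_\Omega |\Jac(\Psi_V|_{X^{\rm sm}})(x)|\, d\mu_{n,d}(V)\right) d\mu_d(x).$$
By $\GL_n(R)$-invariance of $\mu_{n,d}$, together with the transitive $\GL_n(R)$-action on each sphere $S(0,i)$, the inner integral is a constant $\kappa$ independent of $x$ and of $i$ (same argument as at the end of the proof of Lemma \ref{lccconeplan}). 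Taking $X = \Pi$ a $d$-dimensional linear subspace of $K^n$, already covered by Lemma \ref{lccconeplan}, pins $\kappa = 1$, and the displayed formula for $\Theta_d(X)(0)$ then yields the claim.
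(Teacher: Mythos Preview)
Your proposal is correct and follows precisely the route sketched by the paper in the paragraph preceding the lemma: replace the bijection $\Phi_V:\Pi\to K^n/V$ from Lemma~\ref{lccconeplan} by the restriction of $\Psi_V(x)=\pi_K^{\ord(x)-\ord(p_V(x))}p_V(x)$ to the smooth locus of $X$, and carry the area formula with multiplicities through the same chain of equalities (a)--(d). Your displayed ``heart of the proof'' identity is exactly the analogue of (\ref{c}); one clean way to see it is to observe that both sides equal the integral of the $\pi_K^{e\ZZ}$-invariant density $q^{d\,\ord p_V(x)}\,|\Jac(p_V|_X)(x)|\,d\mu_d(x)$ over a fundamental domain for the $\pi_K^{e\ZZ}$-action on $X\setminus\{0\}$ --- the left side uses the domain $\{x\in X:0\le\ord p_V(x)\le e-1\}$, the right side uses $\{x\in X:0\le\ord x\le e-1\}$.

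One point deserves a word of caution. In your final step you write that the inner integral $\int_\Omega|\Jac(\Psi_V|_{X^{\rm sm}})(x)|\,d\mu_{n,d}(V)$ is a constant $\kappa$ independent of $x$ ``by the same argument'' as in Lemma~\ref{lccconeplan}. In the planar case that Jacobian depends only on $x$ (the domain $\Pi$ is fixed), whereas here it depends on the pair $(x,\,T_xX^{\rm sm})$. The argument still goes through, but you should note two things: first, since $X$ is a $\Lambda$-cone and $\Lambda$ is open in $K^\times$, the curve $\lambda\mapsto\lambda x$ lies in $X$ for $\lambda$ near $1$, so $x\in T_xX^{\rm sm}$; second, $\GL_n(R)$ acts transitively on pairs $(x,T)$ with $x\in S(0,i)$ and $x\in T$ a $d$-dimensional linear subspace (extend $\pi_K^{-i}x$ to an $R$-basis of $T\cap R^n$ and then of $R^n$). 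With this refinement the $\GL_n(R)$-equivariance $|\Jac(\Psi_V|_T)(x)|=|\Jac(\Psi_{g^{-1}V}|_{g^{-1}T})(g^{-1}x)|$ does yield constancy of $\kappa$, and the calibration $\kappa=1$ via $X=\Pi$ is exactly as you say.
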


\begin{remark}
For $V\in G(n,n-d)$ and $y\in (K^n/ V)\setminus \{0\}$,
let us denote
by
 $\tilde V_y$ the fiber $\Psi_V^{-1}(\{y\})$ of
$\Psi_V: K^n\setminus V\to (K^n/ V)\setminus\{0\} $, where
$\Psi_V(x)= \pi_K^{  \ord(x) - \ord (p_V(x))  }p_V(x)$.
Note that $\tilde V_y\subset S(0, \ord y )\setminus V$ and
 $GL_n(R)$ acts on
$\tilde V=\{\tilde V_y;V\in G(n,n-d), y\in (K^n/V)\setminus \{0\}\}$.
For $X$ a 
 definable set of dimension $d$ in
$S(0,c)$,
where  $c\in \ZZ$, 
the statement of Lemma
\ref{lcccone} may be
reformulated as
$$ \mu_d(X)=\int_{V\in \Omega} \int_{ y\in S(0,c)\subset K^n/V}
\#(X\cap \tilde V_y) \ d\mu_d(y)\ d\mu_{n,d}(V).$$
Now note that the mapping $(V,y)\mapsto \tilde V_y$
defined from
$\{(V,y); V\in G(n,n-d), y\in K^n/V\} $
 to $\tilde V$ is one-to-one and that
the image of the Haar measure of $GL_n(R)$
under $g\mapsto g\cdot \tilde V_0$ (for $\tilde V_0$ fixed in
$\tilde V$)
gives a  $GL_n(R)$-invariant measure $\nu$ on $\tilde V$ such
that for
$E\subset \tilde V$, $E$ subanalytic say, we have
$$ \nu(E)=\int_{V\in \Omega} \int_{ y\in S(0,c)\subset K^n/V}
\11_{E}(\tilde V_y) \ d\mu_d(y)\ d\mu_{n,d}(V).$$
To obtain the above equality, it is enough to remark that
the right hand side  gives a function on
subsets of $\tilde V$ which pulls back on $Gl_n(R)$
as a Haar measure. With these notations we see that Lemma \ref{lcccone}
is nothing else than the classical spherical Crofton formula
for $X\cap S(0,c)$ (for a standard reference see \cite{Fe}, Theorem 3.2.48 and
note that the proof may be applied in our setting):

\begin{theorem}\label{scc}
Let $X$ be a definable set of $ S(0,0)\subset K^n$ of dimension $d$,
 then
$$\mu_d(X)= \int_{V\in \Omega} \int_{ y\in S(0,0)\subset K^n/V}
\#(X\cap \tilde V_y) \ d\mu_d(y)\ d\mu_{n,d}(V)$$
$$ = \int_{\tilde v\in \tilde V}  \#(X\cap \tilde v)
\ d\nu(\tilde v).$$

\end{theorem}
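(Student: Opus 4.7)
The theorem consists of two equalities. The second, between the iterated integral over $V \in \Omega$ and $y \in S(0,0) \subset K^n/V$ on the one hand, and the integral over $\tilde V$ against $\nu$ on the other, follows directly from the characterization of $\nu$ established in the remark preceding the statement: for every definable $E \subset \tilde V$,
$$
\nu(E) = \int_{V \in \Omega}\int_{y \in S(0,0) \subset K^n/V} \11_E(\tilde V_y)\,d\mu_d(y)\,d\mu_{n,d}(V).
$$
Applying this with $E_k := \{\tilde v \in \tilde V : \#(X \cap \tilde v) \ge k\}$ and summing $\sum_{k \ge 1} \nu(E_k)$ (layer cake), or equivalently applying Fubini to the characteristic function of the incidence set $\{(\tilde v, z) \in \tilde V \times X : z \in \tilde v\}$ with respect to the two factorizations of the product measure, yields the claimed equality.

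For the first equality, I would follow the same strategy as in Lemma \ref{lccconeplan} adapted to this spherical context, which is exactly what Federer's proof of the classical spherical Crofton formula does. First, by additivity and cell decomposition, and since definable subsets of dimension strictly less than $d$ contribute zero to both sides, reduce to $X$ a smooth definable $K$-analytic submanifold of $S(0,0)$ of pure dimension $d$. Second, for generic $V \in \Omega \subset G(n, n-d)$ the restriction $\Psi_V|_X : X \to S(0,0) \subset K^n/V$ is generically finite-to-one, and the $p$-adic change of variables applied piecewise via Theorem \ref{thm:CellDecomp} gives
$$
\int_{y \in S(0,0) \subset K^n/V} \#(X \cap \tilde V_y)\,d\mu_d(y) = \int_X |\Jac(\Psi_V|_X)(x)|\,d\mu_d(x).
$$

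Third, integrating over $V$ and applying Fubini transforms the right-hand side into $\int_X \kappa(x, T_x X)\,d\mu_d(x)$ with $\kappa(x, T_x X) := \int_{V \in \Omega} |\Jac(\Psi_V|_X)(x)|\,d\mu_{n,d}(V)$. As at the end of the proof of Lemma \ref{lccconeplan}, the $GL_n(R)$-invariance of $\mu_{n,d}$ combined with the transitive action of $GL_n(R)$ on the pairs (point of $S(0,0)$, tangent $d$-plane there) forces $\kappa$ to be a constant $\kappa_0$ independent both of $x$ and of the tangent direction. That $\kappa_0 = 1$ is then determined by any single normalization test case, for instance $X = \Pi \cap S(0,0)$ with $\Pi$ a $d$-dimensional linear subspace of $K^n$, where both sides can be evaluated by a direct Fubini calculation.

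The main obstacle is the $p$-adic area formula appearing in step two. Classical in substance, it must nevertheless be justified in our definable $p$-adic setting on the sphere, which requires stratifying $X$ and the fibers of $\Psi_V|_X$ via Theorem \ref{thm:CellDecomp}, performing the analytic change of variables on each piece, and summing. This mirrors the treatment of the auxiliary map $\Phi_V$ in the proof of Lemma \ref{lccconeplan}, so no essentially new ingredient is required, but the bookkeeping of indices and the definability of the various Jacobians must be carried out carefully.
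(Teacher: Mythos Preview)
Your approach mirrors the paper's: the paper does not prove Theorem~\ref{scc} independently but presents it as a reformulation of Lemma~\ref{lcccone} (and points to Federer), and Lemma~\ref{lcccone} is in turn proved by exactly the area-formula-plus-invariance scheme you describe. The second equality is handled correctly via the characterization of~$\nu$.

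There is, however, a real gap in your invariance step for the first equality. You assert that $GL_n(R)$ acts transitively on pairs $(x,T)$ with $x\in S(0,0)$ and $T$ a tangent $d$-plane at $x$. Over the reals this holds because the sphere is a genuine hypersurface and $T$ is constrained to lie in $x^{\perp}$. In the $p$-adic case $S(0,0)$ is \emph{open} in $K^n$, so $T$ ranges over all of $G(n,d)$, and the action is \emph{not} transitive: with $x=e_1$, no element of $GL_n(R)$ fixing $e_1$ carries $\mathrm{span}(e_1,\dots,e_d)$ to $\mathrm{span}(e_2,\dots,e_{d+1})$. Hence your argument does not force $\kappa(x,T_xX)$ to be constant.

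The paper's route through cones is what sidesteps this. At a smooth point $x$ of a $\Lambda$-cone the tangent space automatically contains the radial line $Kx$, and on pairs $(x,T)$ with $x\in S(0,0)$ and $Kx\subset T$ the $GL_n(R)$-action \emph{is} transitive (extend the primitive vector $x$ to an $R$-basis of $T\cap R^n$, then to one of $R^n$). This is precisely why the invariance argument in the proof of Lemma~\ref{lccconeplan} succeeds, and the spherical statement for a general definable $X\subset S(0,0)$ is then read off from Lemma~\ref{lcccone} applied to the cone $\Lambda\cdot X$, as the remark preceding Theorem~\ref{scc} indicates. Your direct proof can be repaired the same way: pass to the cone on $X$ first, and run the invariance argument there.
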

\end{remark}

For the general setting we will use the following auxiliary lemma.
\begin{lem}\label{partition}
Let $\Lambda$ be in $\cD$ and
let $X\subset K^n$ be a definable set of dimension $d$. Suppose that  $p:K^n\to K^d$ is  a coordinate projection which is injective on $X$. Then there exist definable sets $C_j$ of dimension $<d$ and a finite partition of $X$ into definable parts $X_j$ such that $p$ is injective on $C_0^{\Lambda}(X_j)\setminus C_j$ for each $j$.
\end{lem}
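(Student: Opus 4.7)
The strategy is to exploit that $p$, being an injective coordinate projection, realizes $X$ as the graph of a definable function over $p(X)\subseteq K^d$, and to apply Proposition \ref{3.6}(c) in order to transfer this graph structure to a dense subset of each tangent cone.

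After reordering coordinates we may assume $p$ is the projection onto the first $d$ coordinates, so that $X=\Gamma(\varphi)$ for a definable $\varphi:p(X)\to K^{n-d}$. First I would partition $X$ into finitely many definable pieces: one exceptional part $X_0$ of dimension $<d$ (for which we take $C_0:=C_0^\Lambda(X_0)$, of dimension $<d$ by Lemma \ref{dim}), and parts $X_j=\Gamma(\varphi_j)$ for $j\geq 1$ with $\varphi_j:U_j\to K^{n-d}$ analytic and globally $C$-Lipschitz on a definable open $U_j\subseteq K^d$; this uses cell decomposition together with Theorem \ref{Lipschitz decomposition}, with any lower-dimensional residue absorbed into $X_0$. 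By Lemma \ref{ana-eps} each $\varphi_j$ is $C$-analytic, so after rescaling the codomain $K^{n-d}$ (which preserves $p$ and commutes with tangent-cone formation) we may assume $\varphi_j$ is $\varepsilon$-analytic with $\varepsilon\leq 1$. A further refinement using Corollary \ref{corcone1} arranges that $\Lambda$ is adapted to each $(U_j,0)$ in the sense of Corollary \ref{localconic}.

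For $j\geq 1$: if $\dim C_0^\Lambda(X_j)<d$ I simply take $C_j:=C_0^\Lambda(X_j)$; otherwise Proposition \ref{3.6}(c) yields a definable $\psi_j:C_0^{\Lambda,\Omega_j}(U_j)\to K^{n-d}$ whose graph $\Gamma(\psi_j)$ is dense in $C_0^\Lambda(X_j)$. Since $\Gamma(\psi_j)$ is a function graph over the first $d$ coordinates, $p$ is trivially injective on it. I then set
\[
C_j:=\bigl\{z\in C_0^\Lambda(X_j):p^{-1}(p(z))\cap C_0^\Lambda(X_j)\neq\{z\}\bigr\},
\]
so that $p$ is injective on $C_0^\Lambda(X_j)\setminus C_j$ by construction, and it only remains to check $\dim C_j<d$.

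For the dimension bound, let $E:=C_0^\Lambda(X_j)\setminus\Gamma(\psi_j)$; since $\Gamma(\psi_j)$ is dense in the closed definable set $C_0^\Lambda(X_j)$ of dimension $d$, we have $\dim E<d$. If $z\in C_j$ admits a witness $z'\neq z$ in $C_0^\Lambda(X_j)$ with $p(z')=p(z)$, the injectivity of $p$ on $\Gamma(\psi_j)$ forces at least one of $z,z'$ to lie in $E$; hence either $z\in E$, or $z\in\Gamma(\psi_j)$ with $p(z)\in p(E)$. Consequently
\[
C_j\subseteq E\cup\bigl(\Gamma(\psi_j)\cap p^{-1}(p(E))\bigr),
\]
and both sets on the right have dimension $<d$ (the second because $\psi_j$ is a function and $\dim p(E)\leq\dim E$). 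The main obstacle is the preparatory partitioning step: arranging $\varepsilon$-analyticity with $\varepsilon\leq 1$ and $\Lambda$-adaptedness simultaneously on every piece while keeping the residue of dimension $<d$. This is handled by combining cell decomposition, Theorem \ref{Lipschitz decomposition}, a codomain rescaling that leaves $p$ invariant, and a final refinement via Corollary \ref{corcone1}; once these are in place the application of Proposition \ref{3.6}(c) is essentially automatic.
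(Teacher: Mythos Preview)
Your route is genuinely different from the paper's. The paper does not invoke Proposition~\ref{3.6}: it reduces to $\Lambda=K^\times$, partitions the tangent cone $C_0^{K^\times}(X)$ itself into finitely many $K^\times$-cones $B_j$ on which $p$ is injective, and then pulls this back to a partition of $X$ via a ``nearest-cone'' construction (letting $B_j'$ be the union of lines through $0$ strictly closer to $B_j$ than to any other $B_i$, and taking $X_j\supset X\cap B_j'$), which forces $C_0^{K^\times}(X_j)\subset\overline{B_j}$ and lets one set $C_j:=\overline{B_j}\setminus B_j$.

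There is, however, a genuine gap in your preparatory step. You assert that cell decomposition together with Theorem~\ref{Lipschitz decomposition} partitions $X$ into graphs of globally $C$-Lipschitz maps $\varphi_j:U_j\to K^{n-d}$ over the \emph{given} projection $p$. But Theorem~\ref{Lipschitz decomposition} takes as input a map that is already locally $\varepsilon$-Lipschitz for a \emph{uniform} $\varepsilon$; cell decomposition only makes $\varphi$ analytic on open pieces, and such a map need not have bounded derivative. Take $K=\QQ_p$ with $p\equiv 2\pmod 3$, $n=2$, $d=1$, $p$ the first-coordinate projection, and $X=\{(t^3,t^2):t\in R\}$: then $p$ is injective on $X$ (the cube map is injective on $R$), but the induced $\varphi$ satisfies $|\varphi'(s)|=|s|^{-1/3}\to\infty$ as $s\to 0$, so no definable open piece of the base with $0$ in its closure carries a Lipschitz restriction, and your codomain-rescaling only multiplies $|\varphi'|$ by a constant. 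The device that \emph{does} produce $\varepsilon$-analytic graphs, Proposition~\ref{1.4}, achieves this only after applying elements of $\GL_m(R)$, which in general do not commute with the fixed $p$. In fact in this example one computes $C_0^{K^\times}(X)=\{0\}\times K$, so $p$ collapses the entire cone and no partition of $X$ can repair this; the obstruction is intrinsic, and both arguments tacitly require something like $\dim p(C_0^{\Lambda}(X))=d$, which does hold for the generic projections $p_V$ to which the lemma is actually applied in the proof of Theorem~\ref{lcc}.
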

\begin{proof}
Since $C_0^{\Lambda}(X)\subset C_0^{K^\times}(X)$ for any $\Lambda$ in $\cD$, we may suppose that $\Lambda=K^\times$. We may also suppose that $0\in \overline{X}\setminus X$.
Partition $C_0^{K^\times }(X)$ into finitely many definable parts $B_j$ such that $p$ is injective on each set $B_j$. By linearity of $p$ we may suppose that each $B_j$ is a $K^\times$-cone. For each $j$ let $B_j'$ be the definable subset of $K^n$ consisting of the union of all lines $\ell\in K^n$ through $0$ such that the distance between $\ell\cap S(0,0)$ and $B_j\cap S(0,0)$ is strictly smaller than the distance between $\ell\cap S(0,0)$ and $B_i\cap S(0,0)$ for all $i\not = j$. Put $X_{j}^0:= X\cap B_j'$ for each $j$, and take a finite definable partition of $X$ into parts $X_j$ satisfying $X_{j}^0\subset X_j$ for each $j$.
 By construction $C_0^{K^\times }(X_j) = C_0^{K^\times }(X_{j}^0) \subset \overline B_j$. Let $C_j$ be $\overline{B_j}\setminus B_j$.
Then the $X_j$ and $C_j$ are as desired.
\end{proof}

We now prove Theorem \ref{lcc} in its general setting, that is to say,
for $X$ a given definable
subset of $K^n$ of dimension $d$ instead of some
definable $\Lambda$-cone of $K^n$ as in Lemma \ref{lcccone}.

\begin{proof}[Proof  of Theorem \ref{lcc}]

As in the proof of Theorem \ref{mt} we may
 assume that $\varphi=\11_X$.
Up to a finite partition of $X$ into definable parts we may suppose that $X$ is the graph of an $\varepsilon$-analytic map $U\subset K^d\to K^{n-d}$ as in Corollary \ref{trivcorSC}  and then it follows by this corollary that
$$
\Theta_d(X)(0)= \Theta_d( SC_0^{\Lambda} (X) )(0) = \Theta_d ( C_0^{\Lambda}(X) )(0).
$$
 For $C_0^{\Lambda}(X) $ we know that Theorem  \ref{lcc} holds by Lemma \ref{lcccone}, that is
$$
\Theta_d ( C_0^{\Lambda}(X) ) (0)
= \int_{V\in \Omega \subset G (n, n-d)}
\Theta_d (p_{V !, 0} (\11_{C_0^{\Lambda}(X)} ) \,
d \mu_{n, d}(V).
$$
We claim that, for generic $V$,
$$
\Theta_d (p_{V!,0}(\11_{C_0^{\Lambda}(X)})) = \Theta_d (p_{V!,0}(\11_X))
$$
which finishes the proof. We prove the claim as follows. Fix $V$. By Lemma \ref{partition} we can partition $X$ into finitely many definable parts $X_j$ (depending on $V$) such that $p_V$ is injective on $X_j$ and, up to a definable set of dimension $<d$, also on $C_0^{\Lambda}(X_j)$.  
By  additivity it is now enough to prove that
$$
\Theta_d (p_{V}(C_0^{\Lambda}(X_j)))(0) = \Theta_d (p_{V}(X_j))(0),
$$
which follows from Theorem \ref{mt} for open sets since
$p_{V}(C_0^{\Lambda}(X_j)) = C_0^{\Lambda}(p_{V}(X_j)).$
  \end{proof}

\bibliographystyle{amsplain}

\begin{thebibliography}{SGA}
\bibitem{bekka}
K. Bekka, \textit{Regular
stratification of subanalytic sets},
Bull. London Math. Soc. \textbf{25} (1993),   7--16.




\bibitem{pres}
R. Cluckers, \textit{Presburger sets and $p$-minimal fields},
J. Symbolic Logic \textbf{68} (2003),  153--162.


\bibitem{Ccell}
R. Cluckers, \textit{Analytic $p$-adic cell decomposition and
integrals}, Trans. Amer. Math. Soc. \textbf{356} (2004),
1489--1499.

\bibitem{Cexp} R. Cluckers, \textit{Multi-variate {I}gusa theory:
Decay rates of $p$-adic exponential sums},
 Int. Math. Res. Not.  \textbf{76} (2004), 4093--4108.


\bibitem{CCL} R.~Cluckers, G.~Comte and F.~Loeser,
\emph {Lipschitz continuity properties for $p$-adic
semi-algebraic and subanalytic functions}, to appear in GAFA,
arXiv:0904.3853.


\bibitem{CLR1} R.~Cluckers, L.~Lipshitz and Z.~Robinson,
\emph {Analytic cell decomposition and analytic motivic
integration}, Ann. Sci. \'{E}cole Norm. Sup. \textbf{39} (2006),
535--568.

\bibitem{CLip}
R. Cluckers, L. Lipshitz, \textit{Fields with analytic structure},
arXiv:0908.2376.



\bibitem{cons}
R. Cluckers, F. Loeser, \textit{Constructible motivic functions and
motivic integration},
Invent. Math. \textbf{173} (2008),  23--121.




\bibitem{cohen}P. J. Cohen,
\textit{Decision procedures for real and {$p$}-adic fields}, Comm.
Pure Appl. Math. \textbf{22} (1969), 131--151.


\bibitem{comte}
G. Comte,
\textit{\' Equisingularit\'e r\'eelle : nombres de Lelong et images polaires},
Ann. Sci. {\'E}cole Norm. Sup. \textbf{33} (2000), 757--788.


\bibitem{lk}
G. Comte, M. Merle, \textit{\'Equisingularit\'e r\'eelle II : invariants
locaux et conditions de r\'egularit\'e},
Ann. Sci. {\'E}cole Norm. Sup. \textbf{41} (2008), 1--48.


\bibitem{coliro}
G. Comte, J.-M. Lion, J.-P. Rolin,
\textit{Nature log-analytique du volume des sous-analytiques},
Illinois J. Math. \textbf{44} (2000), 884--888.


\bibitem{D84}
J. Denef, \textit{The rationality of the Poincar\'e series
associated to the $p$-adic points on a variety}, Invent. Math.
\textbf{77} (1984), 1--23.

\bibitem{D85}
J. Denef, \textit{On the evaluation of certain $p$-adic integrals},
S\'eminaire de th\'eorie des nombres, Paris 1983--84, 25--47, Progr.
Math., \textbf{59}, Birkh\"auser Boston, Boston, MA, 1985.


\bibitem{Dcell}
J. Denef, \textit{$p$-adic semi-algebraic sets and cell
decomposition}, J. Reine Angew. Math. \textbf{369} (1986),
154--166.



\bibitem{DvdD}
J. Denef, L. van den Dries, \textit{$p$-adic and real subanalytic
sets}, Ann. of Math. \textbf{128}, (1988), 79--138.



\bibitem{Draper}
R. Draper,  \textit{Intersection theory in analytic geometry},
Math. Ann. \textbf{180} 1969, 175--204.


\bibitem{DHM} L.~van~den~Dries, D.~Haskell and D.~Macpherson,
\emph{One-dimensional $p$-adic subanalytic sets}, J. London Math.
Soc.  \textbf{59}(1999), 1--20.


\bibitem{sd}
L. van den Dries, P. Scowcroft, \textit{On the structure of
semi-algebraic sets over $p$-adic fields}, J. Symbolic Logic
\textbf{53} (1988), 1138--1164.


\bibitem{vdDries}
{L. van den Dries}, \textit{Dimension of definable sets, algebraic
boundedness and {H}enselian fields}, Ann. Pure Appl. Logic
\textbf{45} (1989), 189--209.


\bibitem{Fe}
H. Federer, \textit{Geometric measure theory},
Grundlehren Math. Wiss.
\textbf{153}, Springer Verlag (1969).

\bibitem{Heifetz}
D. Heifetz, \textit{
$p$-adic oscillatory integrals and wave front sets},
Pacific J. Math. \textbf{116} (1985),  285--305.


\bibitem{HeMeSa}
J. P. Henry, M. Merle, C. Sabbah, \textit{
Sur la condition de Thom stricte pour un morphisme analytique complexe},
Ann. Sci. {\'E}cole Norm. Sup. \textbf{17}  (1984), no. 2, 227--268.

\bibitem{Hi}
H. Hironaka, \textit{
Stratification and flatness},
Real and complex singularities
(Proc. Ninth Nordic Summer School/NAVF Sympos. Math., Oslo, 1976),
 Sijthoff and Noordhoff, Alphen aan den Rijn, (1977),  199--265.


 \bibitem{Kuo}
 T. C. Kuo,
 \textit{The ratio test for analytic Whitney stratifications },
Proceedings of Liverpool Singularities  Symposium I,
Lecture Notes in Math., \textbf{192}, Springer, Berlin, (1971),
141--149.


\bibitem{KurWExp}
K. Kurdyka,
\textit{On a subanalytic stratification satisfying a Whitney property
with exponent 1}.
Real algebraic geometry (Rennes, 1991),
Lecture Notes in Math., \textbf{1524}, Springer, Berlin, (1992),
316--322.

\bibitem{KurPar}
K. Kurdyka, A. Parusi\'nski,
\textit{$w\sb f$-stratification of subanalytic functions and the
Lojasiewicz inequality}.
C. R. Acad. Sci. Paris S\'er. I Math., \textbf{318}, no. 2,
(1994),  129--133.


\bibitem{KPR}
K. Kurdyka, J.-B.  Poly, G.  Raby,
\textit{Moyennes des fonctions sous-analytiques, densit\'e, c\^one
tangent et tranches}.
Real analytic and algebraic geometry (Trento, 1988),
Lecture Notes in Math., \textbf{1420}, Springer, Berlin, (1990),
170--177.

\bibitem{KR}
K. Kurdyka, G. Raby,
\textit{Densit\'e des ensembles sous-analytiques},
Ann. Inst. Fourier \textbf{ 39} (1989), 753--771.




\bibitem{Lelong}
P. Lelong, \textit{
 Int\'egration sur un ensemble analytique complexe},
Bull. Soc. Math. France \textbf{85} (1957),  239--262.

\bibitem{Lio}
J.-M. Lion, \textit{Densit\'e des ensembles semi-pfaffiens},
Ann. Fac. Sci. Toulouse Math.  \textbf{7} (1998),  87--92.

\bibitem{Loi1}
T. L. Loi, \textit{Thom stratifications for functions definable
in o-minimal structures on $(\RR, +, \cdot)$},
C. R. Acad. Sci. Paris S\'er. I Math. \textbf{324} (1997),
no. 12, 1391--1394.


\bibitem{Loi2}
T. L. Loi, \textit{Verdier and strict
Thom stratifications in o-minimal structures},
Illinois J. Math. \textbf{42} (1998),
 no. 2, 347--356.

\bibitem{oesterle}
J. Oesterl{\'e}, \textit{R{\'e}duction modulo $p^{n}$ des
sous-ensembles analytiques ferm{\'e}s de ${\ZZ}^{N}_{p}$}, Invent.
Math. \textbf{66} (1982), 325--341.

\bibitem{Paw}
W. Paw\l ucki, \textit{Le th\'eor\`eme de Puiseux pour une application
sous-analytique},
Bull. Polish Acad. Sci. Math. \textbf{32} (1984), no. 9-10, 555--560.


\bibitem{serre}
J.-P. Serre, \textit{Quelques applications du th\'eor\`eme de
densit\'e de Chebotarev}, Inst. Hautes {\'E}tudes Sci. Publ.
Math. \textbf{54} (1981), 323--401.

\bibitem{Siu}
Y.T. Siu, \textit{Analyticity of sets associated to Lelong numbers and
the extension of closed positive currents},
Invent. Math. \textbf{27} (1974), 53--156.

\bibitem{Thie}
P. Thie,  \textit{The Lelong number of a point of a complex analytic set},
Math. Ann.  \textbf{172} (1967), 269--312.



\bibitem{Thom}
R. Thom, \textit{ Ensembles et morphismes stratifi\'es},
Bull. Amer. Math. Soc.  \textbf{75} (1969), 240--284

\bibitem{Verdier} J. L. Verdier,
\textit{Stratifications de Whitney et th\'eor\`eme de Bertini-Sard},
Invent. Math \textbf{36} (1976),
295--312.



\bibitem{Whitney} H. Whitney,  \textit{
Tangents to an analytic variety},
Ann. of Math. \textbf{81} (1965),  496--549.




\end{thebibliography}

\end{document}